\newtheorem{thm}{Theorem}[section]
\newtheorem{defn}[thm]{Definition}
\newtheorem{cor}[thm]{Corollary}
\newtheorem{prop}[thm]{Proposition}
\newtheorem{lem}[thm]{Lemma}
\newtheorem{rk}[thm]{Remark}
\title{Ergodicity results for the open KPZ equation}
\author{Shalin Parekh}
\date{November 2023}
\begin{document}

\maketitle

\begin{abstract}
We give a new proof of existence as well as two proofs of uniqueness of the invariant measure of the open-boundary KPZ equation on $[0,1]$, for all possible choices of inhomogeneous Neumann boundary data. 
Both proofs yield an exponential convergence result in total variation when combined with the strong Feller property which was recently established in \cite{KM22}. An important ingredient in both proofs is the construction of a compact state space for the Markov operator to act on, which measurably contains all of the usual Hölder spaces modulo constants. Along the way, the strong Feller property is extended to this larger class of initial conditions. 
The arguments do not rely on exact descriptions of the invariant measures, and some of the results generalize to the case of a spatially colored noise and other boundary conditions.
\end{abstract}

\section{Introduction}

The open KPZ equation with inhomogeneous Neumann boundary condition is a stochastic PDE on the domain $(t,x) \in \Bbb R_+\times [0,1]$ formally written as 
\begin{equation}\label{kpz0}\partial_t h(t,x) = \partial_x^2 h(t,x) +(\partial_x h(t,x))^2 +\xi(t,x),\;\;\;\;\;\; \partial_x h(t,0)=A+\frac12,\;\;\;\;\; \partial_x h(t,1)=B-\frac12,
\end{equation}
where $A,B\in \Bbb R.$ Here $\xi$ is a space-time white noise, the Gaussian field on $\Bbb R_+\times [0,1]$ with formal covariance $\Bbb E[\xi(t,x)\xi(s,y)] = \delta(t-s)\delta(x-y)$. This equation appears as a scaling limit of various probabilistic systems with very simple microscopic dynamics, for instance ASEP with open boundaries \cite{CS18, Parekh}. The equation \eqref{kpz0} is ill-posed due to the singularity of the noise, and so one usually works with the KPZ equation using the Hopf-Cole transform: if $h$ solves \eqref{kpz0} then $z:=e^h$ formally solves the Robin-boundary stochastic heat equation with multiplicative noise:
\begin{equation}\partial_t z(t,x) = \partial_x^2 z(t,x) + z(t,x)\xi(t,x),\;\;\;\;
\partial_x z(t,0) =Az(t,0), \;\;\;\; \partial_x z(t,1) = Bz(t,1),\label{mshe0}
\end{equation}
which is a well-posed Itô SPDE when written in mild form. The extra additive constant 1/2 in both parameters of \eqref{kpz0} is a convention that has been taken in order to take into account certain ``boundary-renormalization'' effects that become apparent when applying the Hopf-Cole transform, see \cite[Remark 1.11]{GH17} and \cite{GPS}. Recently other methods such as regularity structures \cite{Hai14, GH17} have been successful in making sense of \eqref{kpz0} directly, which is relevant to this work although we will mainly work with the Hopf-Cole transform here.

In a recent work \cite{Corwin-Knizel} proves existence of an invariant measure for \eqref{kpz0} for all choices of boundary parameters $A,B\in \Bbb R.$ They prove this by taking limits of known invariant measures for ASEP and showing that they are tight under the KPZ scaling considered in \cite{CS18,Parekh}. The invariant measures are nontrivial and have explicit descriptions in terms of certain functionals of Brownian motions or Brownian bridges for certain specific values of $A,B$. When $A-B>-1$ the authors characterize the law in terms of a Laplace transform using the dual Hahn process. In the present work we give an independent proof of existence of an invariant measure for each $A,B \in \Bbb R$, which also works for other type of Gaussian noises that are colored in space and white in time (see Corollary \ref{existence}).

In a follow-up paper \cite{KM22} prove the strong Feller property for \eqref{kpz0}, following a method developed by \cite{HM17} using the theory of regularity structures to make sense of KPZ. This made it possible to prove uniqueness of the invariant measure for certain specific choices of $A,B$, assuming one knows certain exact information about the invariant measure in advance, in particular that it has full support. The difficulty with proving uniqueness for general parameters is that we do not have explicit descriptions of the laws of the invariant measures for general values of the boundary parameters. There is a conjecture from \cite{BLD21} but it remains to be shown that the resultant object is in fact invariant when $A-B\leq -1$. We also mention \cite{BW10,BW18} for more about the formulaic viewpoint on the invariant measures.

The present work extends the uniqueness result to general values of $A,B$ without relying on any exact descriptions of the invariant measures, thus completing a final step in the proof of the uniqueness conjecture from \cite{Cor20, Corwin-Knizel}, actually we will go much further and prove that there is a spectral gap and uniform mixing over all initial data. We give two separate proofs, one via a support theorem and another via a polymer coupling argument. In the support proof, the main tools that we use are the Feynman-Kac formula, the Cameron-Martin theorem, and the strong Feller property on $C[0,1]$. In the polymer argument, the main tool used is the convolution property for the propagators of the stochastic heat equation \eqref{mshe0}, which also gives a one-force one solution principle for the open KPZ equation.

To describe our results we need to specify the choice of topology. Let $\mathcal X_0$ denote the quotient of the space of continuous functions $[0,1]\to \Bbb R$ by the equivalence relation $f\sim g$ if $f-g$ is constant. Denote by $[f]$ the equivalence class of $f$. For $[f],[g]\in\mathcal X_0$ we define their distance $$d([f],[g]) := d_{\text{Proh}} \bigg( \frac{e^f}{\int_{[0,1]}e^f},\frac{e^g}{\int_{[0,1]}e^g}\bigg),$$
where $d_{\text{Proh}}$ is a Prohorov metric on the space of probability measures on $[0,1]$ (i.e., any metric which metrizes the weak topology), satisfying $d_{\text{Proh}}(\mu,\nu)\leq \|\mu-\nu\|_{TV}$. Finally we denote $\mathcal X$ to be the completion of $\mathcal X_0$ under the metric $d$, which is isometric to the space of probability measures on $[0,1]$ (with Prohorov distance) and is therefore compact. Intuitively the elements of $\mathcal X$ can be viewed as formal logarithms of the measures that are initial conditions for the Hopf-Cole transform \eqref{mshe0}, modulo addition of constants. Thus we still use ``$[\mu]$'' to denote elements of the completion. Sometimes we also use a more generic ``$\phi$'' to denote elements of $\mathcal X$ in situations when it is not particularly important to emphasize that elements are equivalence classes associated to a Borel measure on $[0,1]$.

We will generally take the convention of formulating results in terms of discrete-time Markov semigroups, in order to simplify the exposition and also to be consistent with \cite{HM17, KM22}. This does not lose any generality. For a measurable function $F:\mathcal X\to \Bbb R$ and an initial condition $\phi \in \mathcal X$ the Markov operator $\mathfrak P$ for the open KPZ equation acts by the formula $\mathfrak PF(\phi):= \Bbb E_\phi [ F([h(1,\cdot)])].$
Here $\Bbb E_\phi$ denotes the expectation with respect to the measure $\Bbb P_\phi$ on the canonical space $\mathcal X^\Bbb N$ that corresponds to starting the process at initial state $\phi$, and $h(1,\cdot)$ denotes the spatial process $x\mapsto h(1,x)$ with $h$ solving the open KPZ equation \eqref{kpz0}.  We extend the results of \cite{KM22} by proving the following.

\begin{thm}[Uniqueness via a support theorem]\label{mr}Fix boundary parameters $A,B\in \Bbb R$. The Markov operator $\mathfrak P$ for the open KPZ equation is globally defined and strong Feller on the compact state space $\mathcal X.$ Furthermore there exists a unique invariant measure $\rho_{A,B}$ for the semigroup. The invariant measure has full support on $\mathcal X$. Moreover, for every deterministic initial condition $\phi \in \mathcal X$ we have the ergodic theorem: $$\Bbb P_\phi\bigg(\lim_{N\to\infty}\frac1N \sum_{n=1}^N F([h(n,\cdot)]) = \int_\mathcal X F\;d\rho_{A,B}\bigg)=1,$$
for all bounded measurable $F:\mathcal X\to \Bbb R.$
\end{thm}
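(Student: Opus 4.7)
The plan is to combine the strong Feller property on the compact space $\mathcal{X}$ with a one-step full-support (accessibility) statement and apply Doob's theorem to obtain uniqueness; existence is already provided by Corollary \ref{existence}, while full support of the invariant measure and the ergodic statement will follow as corollaries. First I would extend $\mathfrak{P}$ to all of $\mathcal{X}$ using the Feynman-Kac representation: for $\phi\in\mathcal{X}$ corresponding to a probability measure $\mu_\phi$ on $[0,1]$, set $z(t,x) := \int_0^1 p_t^\xi(x,y)\,\mu_\phi(dy)$, where $p_t^\xi$ is the random Robin-boundary propagator of \eqref{mshe0}. Since $p_t^\xi$ is almost surely a strictly positive continuous kernel for $t>0$, the equivalence class $[\log z(t,\cdot)]$ almost surely defines an element of $\mathcal{X}_0 \subset \mathcal{X}$, so $\mathfrak{P}$ is globally defined. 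To upgrade strong Feller from $C[0,1]$ (where \cite{KM22} supplies it) to $\mathcal{X}$, I would use the semigroup property: for small $t$, continuity of $\mu\mapsto z(t,\cdot)$ in the Prohorov topology on an event of full measure yields weak continuity of the time-$t$ law in $\phi$, and composition with the strong Feller $(1-t)$-step on $C[0,1]$ then gives strong Feller on $\mathcal{X}$.

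The central new ingredient is the support theorem: $\mathfrak{P}(\phi,U)>0$ for every $\phi\in\mathcal{X}$ and every nonempty open $U\subset\mathcal{X}$. Since $\mathcal{X}_0$ is dense in $\mathcal{X}$, it suffices to steer the process arbitrarily close in $\mathcal{X}$ to any target $[f]$ with $f\in C[0,1]$. I would use a Cameron-Martin shift $\xi \mapsto \xi + g$ for smooth $g:[0,1]\times[0,1]\to\mathbb{R}$; the shifted law is mutually absolutely continuous with the original, and under the shift the solution satisfies $\partial_t z = \partial_x^2 z + z(\xi + g)$. By a controllability argument for the deterministic Robin-boundary equation $\partial_t \tilde z = \partial_x^2 \tilde z + \tilde z g$, choose $g$ so that $\tilde z(1,\cdot) = \lambda e^f$ for some $\lambda>0$ starting from $\mu_\phi$. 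Parabolic smoothing makes $\tilde z(\varepsilon,\cdot)$ continuous and strictly positive for any $\varepsilon>0$, so the construction works even when $\mu_\phi$ is singular, and one applies standard approximate controllability on $[\varepsilon,1]$. A small-noise estimate then gives $[\log z(1,\cdot)]$ close to $[f]$ with positive probability under the shifted law, hence under the original law by Cameron-Martin.

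With strong Feller on the compact $\mathcal{X}$ and one-step accessibility of every open set in hand, Doob's theorem produces a unique invariant measure $\rho_{A,B}$, and accessibility immediately forces $\mathrm{supp}(\rho_{A,B}) = \mathcal{X}$. For the ergodic theorem at an arbitrary deterministic $\phi$, Birkhoff's theorem on the canonical path space together with the zero-one law for the shift-invariant $\sigma$-field (which holds because the unique invariant measure is extremal) gives the limit $\rho_{A,B}$-almost surely, and the strong Feller property lifts this to every $\phi\in\mathcal{X}$ via a coupling argument on the compact state space, equivalently the exponential total-variation convergence announced in the abstract. The main obstacle will be the controllability step in the support argument: one must verify that the linear Robin-boundary PDE $\partial_t \tilde z = \partial_x^2 \tilde z + \tilde z g$ is approximately controllable from an arbitrary positive initial condition to an arbitrary positive target in time $1-\varepsilon$ with a $g$ of finite $L^2$ norm, which is delicate because the initial datum may be a singular measure and because the controlling $g$ must lie in the Cameron-Martin space for the Girsanov density to be integrable. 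Parabolic smoothing on $[0,\varepsilon]$ combined with the freedom to take $g$ smooth with bounded norm should resolve both of these issues.
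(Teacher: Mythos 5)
Your overall architecture (existence via Corollary \ref{existence}, strong Feller upgraded from the finer space to $\mathcal X$ by composing weak continuity of the time-$t$ law with the \cite{KM22} strong Feller step, then full support plus Doob--Khasminskii/an ergodicity argument to get uniqueness and the everywhere-ergodic theorem) matches the paper's skeleton, and the final assembly is sound. The genuine gap is in your support theorem. The Cameron--Martin shift $\xi\mapsto\xi+g$ only tells you that the law of the \emph{shifted, still random} solution is mutually absolutely continuous with the original law; it does not tell you that the solution driven by $\xi+g$ is close to the \emph{deterministic} controlled trajectory $\tilde z$ of $\partial_t\tilde z=\partial_x^2\tilde z+\tilde z g$ with positive probability. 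That ``small-noise estimate'' is itself the hard direction of a Stroock--Varadhan support theorem: there is no small parameter in front of the noise, and the It\^o solution map $\xi\mapsto z$ for the multiplicative stochastic heat equation is only measurable, not continuous, in any topology in which the event $\{\xi\text{ small}\}$ has positive probability. Making this step rigorous would require either regularity-structures machinery of Hairer--Schonbauer type or a Wong--Zakai/conditioning argument, none of which is supplied by your controllability construction (which, by itself, is fine: choosing $g=\tilde z^{-1}(\partial_t-\partial_x^2)\tilde z$ for an interpolating positive profile handles the approximate controllability, and parabolic smoothing handles singular initial measures).

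The paper circumvents exactly this obstruction. It uses shifts concentrated at the terminal time, $h_n^{\varphi_0}(t,x)=n\,1_{[1-n^{-1},1]}(t)\varphi_0(x)$, together with the polymer Feynman--Kac decomposition of Lemma \ref{12}: the shifted solution factors as $z_0(1,\cdot)\,v_n(1,\cdot)$ with $v_n(1,x)=\mathbf E^{\xi,\mu}_{X^{1,x}}[\exp(n\int_0^{1/n}\varphi_0(X(s))ds)]\to e^{\varphi_0(x)}$ boundedly, so $F(\xi+h_n^{\varphi_0})\to[z_0(1,\cdot)e^{\varphi_0}]$ in probability in $\mathcal X$ (compactness of $\mathcal X$ giving automatic tightness). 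Combined with the elementary fact (Step 2 of Theorem \ref{supp}) that for a countable set $D\subset H$ the random set $F(\xi+D)$ is a.s. contained in the support of the law of $F(\xi)$, and the observation that $\{[z_0(1,\cdot)e^{\varphi}]:\varphi\in D_0\}$ is dense in $\mathcal X$ because $z_0(1,\cdot)$ is a fixed strictly positive function, full support follows with no continuity of the solution map and no small-noise analysis. If you want to keep your controllability route, you would have to import a bona fide support theorem for the Robin-boundary multiplicative SHE, which is a substantially heavier input than anything used in the paper.
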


This result will still be further strengthened in Theorems \ref{1f1s} and \ref{tvc} below. The existence statement is proved in Corollary \ref{existence} below, independently of all previous works. The uniqueness and ergodicity result is proved as Corollary \ref{mr2} below. The proof of the uniqueness result uses the results of \cite{KM22} as an input, namely that the strong Feller property already holds in \textit{some} (stronger) topology. Note in the above theorem that the ergodic theorem holds for \textit{every} initial condition (as opposed to \textit{almost} every initial condition), which rules out the existence of another invariant measure even if one restricts the action of the semigroup to some finer metric space $\mathcal Y$ which embeds continuously into $\mathcal X$ (e.g. the space $\mathcal Y$ of Hölder continuous functions given in \eqref{yspace} below).

The proof of our theorem is done by showing full support in $\mathcal X$ of the open KPZ equation at any fixed time. To show full support, we use the Cameron-Martin theorem and the Feynman-Kac formula. In particular we use the Hopf-Cole transform and rely on the linear structure of the equation. In fact, throughout the whole paper, we work with only with the Hopf-Cole transform, using the results of \cite{HM17,KM22} as given and then building off of those results using only classical techniques without any further use of regularity structures.

When there exists a natural coupling of the Markov process started from different initial data, it is nice to have a ``one-force-one-solution'' principle where one can show that the effect of the initial data vanishes uniformly. Therefore we will give a second approach to proving the uniqueness result without using the strong Feller property, and this approach also gives an exponential ``one-force one-solution principle'' for open KPZ. To formulate the result, we let $\mathcal X$ be as above. The following result will be proved \textit{independently} of Theorem \ref{mr}.

\begin{thm}[Uniqueness via polymer coupling]\label{1f1s}
The following ``one-force one-solution'' principle holds for open KPZ: fix a space-time white noise on $(-\infty,0]\times[0,1]$ and let $h_N^\mu,h_N^\nu: [-N,0]\times [0,1]\to \Bbb R$ denote the solutions to open KPZ driven by $\xi$ such that their Hopf-Cole transforms are started from deterministic nonnegative Borel measures $\mu,\nu$ at time $t=-N$. Then 
there exist $C,c>0$ such that for all $N \ge 1$ one has
$$\Bbb E \bigg[ \sup_{[\mu],[\nu]\in\mathcal X} d_\mathcal X\big([h_N^\mu(0,\cdot)],[h_N^\nu(0,\cdot)]\big)\bigg] \leq Ce^{-cN}.$$ 
\end{thm}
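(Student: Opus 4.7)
The approach is to exploit the Chapman--Kolmogorov (convolution) property for the random propagator of the stochastic heat equation \eqref{mshe0} together with a Birkhoff--Hopf contraction argument in the Hilbert projective metric. Let $K_{s,t}(x,y)$ denote the strictly positive random propagator for \eqref{mshe0}, so that $Z^\mu(t,x) = \int K_{s,t}(x,y)\,d\mu(y)$ for initial data $\mu$ at time $s$, and $K_{s,u}(x,z) = \int K_{t,u}(x,y) K_{s,t}(y,z)\,dy$ for $s<t<u$. Because $d_{\text{Proh}}\le\|\cdot\|_{TV}$ and because the total variation distance between two normalized positive densities is quantitatively controlled by their Hilbert projective distance
$$d_H(f,g) := \log\frac{\sup_x f(x)/g(x)}{\inf_x f(x)/g(x)},$$
it suffices to prove $\Bbb E\bigl[\sup_{\mu,\nu} d_H(Z_N^\mu(0,\cdot),Z_N^\nu(0,\cdot))\bigr]\le Ce^{-cN}$.

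Birkhoff's theorem asserts that any strictly positive linear operator $K$ on $[0,1]$ with finite projective diameter
$$\Delta(K) := \log\sup_{x,x',y,y'\in[0,1]}\frac{K(x,y)K(x',y')}{K(x,y')K(x',y)}$$
acts on positive functions as a strict contraction in $d_H$ with rate $\tanh(\Delta(K)/4)<1$, and moreover maps any pair of positive inputs to outputs whose mutual $d_H$-distance is at most $\Delta(K)$. Applying this to the unit-time kernels $K_k := K_{-k,-k+1}$ and iterating via Chapman--Kolmogorov, one obtains the pathwise bound
$$\sup_{[\mu],[\nu]\in\mathcal X} d_H\bigl(Z_N^\mu(0,\cdot),Z_N^\nu(0,\cdot)\bigr) \le \Delta(K_N)\cdot\prod_{k=1}^{N-1}\tanh\bigl(\Delta(K_k)/4\bigr),$$
uniformly over the initial data; the ``first application'' collapses any two input measures down to finite projective distance, and every subsequent step contracts it.

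The remaining task is to prove good tail/moment estimates for the $\Delta(K_k)$. Since the $K_k$ depend on disjoint portions of the driving white noise, the $\Delta(K_k)$ are independent and identically distributed, so it suffices to study $\Delta(K_1)$. The claim to establish is that $\Delta(K_1)$ has enough exponential moments to guarantee $\Bbb E[\tanh(\Delta(K_1)/4)] < 1$, after which independence and sub-multiplicativity yield the desired expectation bound $Ce^{-cN}$. To bound $\Delta(K_1)$, one needs two-sided pointwise comparisons of the form $c_1(\omega)\,p(x,y)\le K_1(x,y)\le c_2(\omega)\,p(x,y)$ where $p$ is a smooth positive deterministic function and $c_2/c_1$ is a random ratio with exponential moments; such bounds can be derived from a Feynman--Kac representation of the Robin-boundary propagator (involving Brownian bridges with boundary local time corrections encoding the parameters $A,B$), combined with standard positivity and moment estimates for multiplicative chaos on a unit time interval.

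The main obstacle is the quantitative moment control on $\Delta(K_1)$. Pointwise lower bounds on the stochastic heat kernel on the compact interval $[0,1]$ with Robin conditions are the genuinely delicate input, being more subtle than the whole-line Mueller-type estimate because of the boundary local time contributions; upper bounds, by contrast, should be accessible via the Feynman--Kac expansion and standard Gaussian tail techniques. A minor secondary issue is that the leading factor $\Delta(K_N)$ in the product bound must itself be integrable, which will follow from the same one-step moment estimates. Once this analytic input is in place, the rest of the argument is a deterministic contraction estimate and yields Theorem \ref{1f1s} with explicit constants, and in particular gives uniqueness of the invariant measure independently of the strong Feller input used in Theorem \ref{mr}.
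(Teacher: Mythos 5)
Your argument is correct in outline, and it is a genuinely different route from the paper's. The paper also starts from the convolution property of the propagators, but its key lemma is a Doeblin-type coupling bound for products of ``continuous probability kernels'' (Lemma \ref{relaxation}), applied to the time-reversed polymer endpoint kernels $\mathfrak p_n^f$; the contraction is extracted from events $\Omega_n$ on which the unit-time propagators are bounded between $\delta$ and $\delta^{-1}$ (independent along alternate time slices), and a further stretch of algebra (the classes $\mathscr C_M$, the ratio $\|g/f\|_{C[0,1]}$, and sixth-moment bounds from Proposition \ref{nega}) is needed to pass from the resulting $L^1$ bound on normalized kernels to the quotient metric $d_\mathcal X$ uniformly over initial measures. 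Your Birkhoff--Hopf argument in the Hilbert projective metric buys a cleaner conclusion: the metric is already scale-invariant, so the passage to $\mathcal X$ is immediate via $d_{\text{Proh}}\le\|\cdot\|_{TV}\lesssim d_H$, the one-step kernels $K_k$ are genuinely i.i.d.\ (no need to skip to alternate slices), and the $\mathscr C_M$ bookkeeping disappears. Two remarks on your ``main obstacle.'' First, you need less than you think: $\Bbb E[\tanh(\Delta(K_1)/4)]<1$ requires no exponential moments, only that $\Delta(K_1)<\infty$ almost surely, and the prefactor requires only $\Bbb E[\Delta(K_1)]<\infty$, i.e.\ first moments of $\log\sup K_1$ and $\log(1/\inf K_1)$. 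Second, the two-sided positivity input you correctly identify as the delicate point is exactly what the paper supplies as Proposition \ref{nega} (strict positivity with all positive and negative moments of the Robin propagator on $[\epsilon,T]\times[0,1]^2$, proved via Feynman--Kac and Gaussian isoperimetry, essentially the route you sketch); with that proposition taken as input, your proof closes, and, like the paper's, it uses neither the strong Feller property nor the Gaussian whiteness in space beyond what Proposition \ref{nega} and the convolution property require.
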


This will be proved as Theorem \ref{1f1s2} in the main body. Our approach to proving this result follows \cite[Section 4]{GK20} which is done in the periodic case, and uses the convolution property of \eqref{mshe0} in a manner inspired by the approach of \cite{Sinai, EKMS}. In particular, the proof of Theorem \ref{1f1s} still works to show uniqueness in the case of a more general Gaussian noise that is colored in space and white in time, and does \textit{not} rely on the strong Feller property.

By Markov's inequality and Borel-Cantelli, the expectation bound also implies almost sure convergence as $N\to \infty$. We actually prove something stronger, namely that a uniform synchronization result holds in the (much stronger) Hölder metric $d_\mathcal Y$ defined in \eqref{yspace} below, more precisely for $\kappa\in (0,1/2)$, \begin{equation}\label{sync}\lim_{N\to \infty}\sup_{[\mu],[\nu]\in \mathcal X}\big\|(h_N^\mu(0,\cdot)-h_N^\mu(0,0))-(h_N^\nu(0,\cdot) - h_N^\nu(0,0))\big\|_{C^\kappa[0,1]} = 0,\;\;\;\;\;a.s..
\end{equation}See 
Corollary \ref{1f1sy} for the proof, which is based on combining Theorem \ref{1f1s} with a compactness trick. We remark that the proof of that corollary goes through in the case of a spatially smooth noise or a periodic boundary condition as well, thus recovering some results of \cite{Rosati} with a different argument based on compactness and the convolution property. In case one is only interested in the synchronization result, its proof is fairly brief and self-contained within Definition \ref{cpk}, Lemma \ref{relaxation}, Theorem \ref{1f1s2}, and Corollary \ref{1f1sy}.

We now explain the differences of obtaining Theorem \ref{1f1s} compared to method of \cite{Rosati}. That work contains an abstract result which guarantees the one-force-one-solution principle for SPDEs whose solutions are linear as a function of the initial data like the one studied here, as long as Assumption 4.2 therein is satisfied. In \cite{Rosati} Assumption 4.2 is verified for periodic boundary conditions and space-time white noise using paracontrolled distributions, but that is not the only possible way of doing so and Assumption 4.2 for \eqref{mshe0} likely follows easily from the results of Section 2 of the current paper. While that result is not completely independent of the Hopf-Cole transform, its reliance on it is less crucial and it can be applied to more general equations. Furthermore, that paper derives slightly stronger synchronization results than the one we prove here, for instance in that paper \eqref{sync} is shown to occur exponentially fast.

We now turn to the question of total variation convergence. We actually give two proofs that there is a spectral gap, one using the support theorem and the other using the one-force-one-solution principle. The following result will be proved as Theorem \ref{tv3} and then independently reproved as Corollary \ref{bob} below. Both proofs use the strong Feller property and compactness as an input.

\begin{thm}[Spectral gap with a uniform multiplying constant]\label{tvc}
Let $\mathfrak P_*$ denote the adjoint Markov operator for open KPZ, which acts on the space of measures on $\mathcal X$. Then there exist $C,c >0$ such that for all $N$ one has $$ \sup_{\phi\in\mathcal X} \| \mathfrak P_*^N\delta_\phi - \rho_{A,B}\|_{TV}\leq Ce^{-cN},$$ where $\delta_\phi$ denotes a Dirac mass at the initial condition $\phi\in \mathcal X$.
\end{thm}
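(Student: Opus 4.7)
The plan is to combine the one-force one-solution principle (Theorem \ref{1f1s}) with the strong Feller property on the compact state space $\mathcal X$ (Theorem \ref{mr}) to establish a uniform Dobrushin contraction for some iterate $\mathfrak P^{M_0+1}$; submultiplicativity of the Dobrushin coefficient will then automatically give the claimed exponential TV bound. The first step is to note that, since $\mathfrak P$ is strong Feller on $\mathcal X$, the map $\phi\mapsto \mathfrak P_*\delta_\phi$ is continuous from $\mathcal X$ into the space of probability measures on $\mathcal X$ with the total variation norm. Compactness of $\mathcal X$ upgrades this to \emph{uniform} continuity: there is a modulus $\omega\colon [0,\infty)\to[0,2]$ with $\omega(0^+)=0$ such that $\|\mathfrak P_*\delta_\phi-\mathfrak P_*\delta_\psi\|_{TV}\leq \omega(d_\mathcal X(\phi,\psi))$ for all $\phi,\psi\in\mathcal X$.

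Next, fix $\phi,\psi\in\mathcal X$ and drive the equation starting from each of them with a common white noise; let $X^\phi_M,X^\psi_M\in\mathcal X$ denote their time-$M$ states. By Theorem \ref{1f1s}, $\Bbb E[d_\mathcal X(X^\phi_M,X^\psi_M)]\leq Ce^{-cM}$ uniformly in $\phi,\psi$. The Markov property together with the bound of the first step gives, for any measurable $F$ with $|F|\leq 1$,
\[|\Bbb E F(h^\phi(M+1,\cdot))-\Bbb E F(h^\psi(M+1,\cdot))| = |\Bbb E[\mathfrak P F(X^\phi_M)-\mathfrak P F(X^\psi_M)]|\leq \Bbb E[\omega(d_\mathcal X(X^\phi_M,X^\psi_M))].\]
Splitting the expectation on the right at a threshold $\epsilon>0$, using Markov's inequality, and then taking the supremum over both $F$ and $\phi,\psi\in\mathcal X$ gives
\[\sup_{\phi,\psi\in\mathcal X}\|\mathfrak P_*^{M+1}\delta_\phi-\mathfrak P_*^{M+1}\delta_\psi\|_{TV}\leq \omega(\epsilon)+\frac{2Ce^{-cM}}{\epsilon}.\]
Choose $\epsilon$ small so that $\omega(\epsilon)<1/4$, then $M_0$ large so that $2Ce^{-cM_0}/\epsilon<1/4$, to obtain a uniform bound by $1$.

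Finally, recall that the Dobrushin coefficient $\alpha(Q):=\tfrac12\sup_{\phi,\psi}\|Q_*\delta_\phi-Q_*\delta_\psi\|_{TV}$ is submultiplicative, $\alpha(Q_1 Q_2)\leq \alpha(Q_1)\alpha(Q_2)$, and serves as the contraction constant for $Q_*$ on probability measures in total variation. The previous step gives $\alpha(\mathfrak P^{M_0+1})\leq 1/2$, so using $\mathfrak P_*$-invariance of $\rho_{A,B}$ we obtain $\sup_\phi\|\mathfrak P_*^{k(M_0+1)}\delta_\phi-\rho_{A,B}\|_{TV}\leq 2\cdot 2^{-k}$, which yields the claim after writing $N=k(M_0+1)+r$ and using monotonicity in $N$. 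The main obstacle is the gap between the $d_\mathcal X$ synchronization from Theorem \ref{1f1s} and the total variation bound the theorem demands, together with the fact that the modulus $\omega$ inherited from strong Feller is purely qualitative; the remedy is to spend one extra step of the semigroup to smooth the coupling, using compactness of $\mathcal X$ to make strong Feller uniform, and to exploit the fact that one only needs $\omega(0^+)=0$ (no quantitative rate on $\omega$ is required), because Dobrushin submultiplicativity automatically bootstraps any strict contraction into exponential decay.
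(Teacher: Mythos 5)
Your proposal is essentially sound, and it reaches Theorem \ref{tvc} by a route that differs in its middle step from both proofs in the paper. The paper's first proof (Theorem \ref{tv3}) is spectral-theoretic: ultra Fellerness plus compactness of $\mathcal X$ make $\mathfrak P_t^*$ a compact operator on the space $\mathcal M_0(\mathcal X)$ of zero-mass signed measures, eigenvalues of modulus one are excluded using the continuous-time semigroup structure, and Gelfand's formula gives the gap. The paper's second proof (Theorem \ref{expo} and Corollary \ref{bob}) only uses the \emph{qualitative} consequence of Theorem \ref{1f1s} (weak convergence of $\mathfrak P_*^N(\delta_\phi-\delta_\psi)$ to zero), upgrades it to uniform TV convergence by an Arzel\`a--Ascoli argument applied to $\phi\mapsto\mathfrak P_*^N(\delta_\phi-\delta_{\phi_0})$, and then bootstraps to an exponential rate by submultiplicativity of the operator norm on $\mathcal M_0(\mathcal X)$. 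You instead use the quantitative expectation bound of Theorem \ref{1f1s} directly: couple through a common noise, convert $d_\mathcal X$-closeness at time $M$ into TV-closeness at a slightly later time via a uniform modulus of continuity for the transition kernel, split with Markov's inequality, and conclude with Dobrushin submultiplicativity. Since the Dobrushin coefficient $\tfrac12\sup_{\phi,\psi}\|Q_*(\delta_\phi-\delta_\psi)\|_{TV}$ coincides (via Hahn--Jordan, exactly as in Step 2 of Theorem \ref{expo}) with the operator norm of $Q_*$ on $\mathcal M_0(\mathcal X)$ up to a factor, your final bootstrap is the same mechanism as the paper's; what you replace is the Arzel\`a--Ascoli step by a direct coupling estimate. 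What your route buys is a shorter, more hands-on argument that makes visible how the one-force-one-solution rate feeds in (even though, as you note, the rate is not needed: any strict contraction at a single time suffices); what the paper's route buys is that it needs only weak convergence of differences, with no coupling construction at all, and so applies verbatim to any Markov operator satisfying that hypothesis.

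One step does need correction. You assert that the strong Feller property of $\mathfrak P$ makes $\phi\mapsto\mathfrak P_*\delta_\phi$ continuous into $(\mathcal M_1(\mathcal X),\|\cdot\|_{TV})$; this is the \emph{ultra} Feller property, which does not follow from strong Fellerness of the same operator. The standard remedy, recorded in the paper as Proposition \ref{UFP}, is that $\mathfrak P^2$ is ultra Feller whenever $\mathfrak P$ is strong Feller; combined with compactness of $\mathcal X$ this yields exactly the uniform modulus $\omega$ you need, but for the two-step kernel. Your argument then goes through verbatim with $\mathfrak P_*^{M+2}$ in place of $\mathfrak P_*^{M+1}$ (bound $|\mathfrak P^2F(X^\phi_M)-\mathfrak P^2F(X^\psi_M)|\le\omega(d_\mathcal X(X^\phi_M,X^\psi_M))$ and proceed as before), and the Dobrushin bootstrap and the reduction from general $N$ by invariance of $\rho_{A,B}$ and monotonicity of TV under $\mathfrak P_*$ are unaffected.
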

For a Markov process, the bound in Theorem \ref{tvc} is an extremely strong form of geometric ergodicity, where the exponential decay rate and the multiplying constant are both uniform over all possible choices of initial data (in the total variation norm). Since our state space $\mathcal X$ measurably contains many other important spaces of possible initial conditions, e.g. Hölder spaces such as $\mathcal Y$ defined in \eqref{yspace} below, the result also implies the corresponding result with the sup over $\mathcal X$ replaced by a sup over $\mathcal Y$ and the TV norm on $\mathcal M(\mathcal X)$ replaced by the TV norm on $\mathcal M(\mathcal Y)$.

Theorem \ref{tvc} then leads to a number of other interesting questions. In particular one can then sensibly ask about the optimal values of the constants $c$ and $C$, which is equivalent to computing (respectively) the \textit{spectral gap} and the \textit{mixing time} of the process. The most interesting formulation of this question would be to study the dependence of both the spectral gap and the mixing time on the size of the interval and the values of the boundary parameters $A,B$. We do not pursue these optimality questions in the present work. In the case of TASEP and ASEP, the analogous question has recently been studied in great depth \cite{SS,Sch,ES,GNS}. In those papers the authors observe some very interesting cutoff phenomena, where for certain boundary parameters there is a very explicit cutoff for the mixing time as a function of the boundary parameters. Recent work of \cite{HS} has even calculated the exact profile of the cutoff, showing that it is given by the cdf of the Tracy-Widom distribution. We expect that there may be similar results for the open KPZ equation as well as the periodic KPZ equation, though this is a difficult question far beyond the scope of this work.

In Theorem \ref{tvc}, we remark that our use of the strong Feller property and compactness tricks to prove the spectral gap seems similar in spirit to works of \cite{TW} which used such ideas to prove a spectral gap for the dynamic $\Phi_2^4$ model. We further remark that the analogous result for the \textit{additive} stochastic heat equation is false. That is, if we consider \eqref{kpz0} without the nonlinear term on the right side, then although the constant $c$ would still be finite, the other constant $C$ would become infinite. In other words, there is still a spectral gap but there is no mixing time \textit{uniform over all initial conditions}. And in fact the state space \textit{cannot} be compactified for the linearized equation. The finiteness of the supremum in Theorem \ref{tvc} and the existence of a compact state space seems related to the so-called ``coming down from infinity'' property that was proved in \cite{MW} for the dynamic $\Phi_3^4$ model. See Remark \ref{solflow} for further details.

Another interesting problem for future work is to see what can be proved about the invariant measures on the half space as opposed to a bounded interval. A conjectural characterization of the extremal invariant measures was given in \cite{BC22}, but it remains to be proved. A compact state space or a spectral gap is unlikely in that case, but it may be possible to show uniqueness using some variant of the arguments given here. For KPZ on the whole line, a synchronization result was recently considered and proved in \cite{JRS} using a different approach with Busemann functions, which seems to be the only synchronization result to date for an SPDE on an unbounded interval driven by Gaussian space-time white noise. Results such as \cite{BCK, Ba13} were some of the first works to prove synchronization in a noncompact setting, though these had a Poissonian kick-forcing.

Let us now say a word about the proofs of the above three theorems, and the ordering of the logic. As stated above, Theorems \ref{mr} and \ref{1f1s} will first be derived completely independently of one another. The proofs of both theorems will ultimately leverage particular properties of \eqref{mshe} that interpret $z$ as the partition function of a polymer. Once these two theorems are proved, then we will show that Theorem \ref{tvc} can be deduced from either of these first two theorems. To go from either of Theorem \ref{mr} or \ref{1f1s} to Theorem \ref{tvc}, we will derive some abstract results about Markov processes on compact state spaces. Specifically, we have the following two general results.

\begin{thm}[Automatic criteria for uniform spectral gap on compact spaces]

\begin{enumerate}
\item Suppose $(\mathfrak P_t)_{t\ge 0}$ is a strongly continuous semigroup of Markov operators on a compact metric space $\mathcal X$. Assume that $\mathfrak P_t$ is strong Feller for each $t> 0$ and admits a unique invariant probability measure $\rho$. Then there exist $C,c>0$ such that for all $t\ge 0$ 
 one has $ \sup_{\phi\in\mathcal X} \big\| \rho - \mathfrak P_t^*\delta_\phi\big\|_{TV}\leq Ce^{-ct},$
where $\mathfrak P_t^*$ is the adjoint Markov semigroup, and $\delta_\phi$ denotes a Dirac mass at the initial condition $\phi\in \mathcal X$. 

\item Let $\mathfrak P$ be a strong Feller Markov operator on some compact state space $\mathcal X$, and let $\mathfrak P_*$ denote the adjoint operator on measures. Assume that 
$\mathfrak P^N_*\delta_\phi - \mathfrak P^N_*\delta_\psi$ converges weakly to $0$ for all $\phi,\psi\in\mathcal X$, i.e., $\lim_{N\to\infty}|\mathfrak P^NF(\phi) - \mathfrak P^NF(\psi)|=0$ for all $\phi,\psi\in\mathcal X$ and all Lipschitz continuous $F:\mathcal X\to \Bbb R$. Then there exists $C,c>0$ such that for all $N\in \Bbb N$ one has $\sup_{\phi,\psi\in\mathcal X} \|\mathfrak P^N_*\delta_\phi - \mathfrak P^N_*\delta_\psi\|_{TV} \leq Ce^{-cN}.$
\end{enumerate}
\end{thm}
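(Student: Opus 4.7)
The plan is to reduce both parts to showing that the Dobrushin ergodic coefficient
\[
\alpha_N := \tfrac{1}{2}\sup_{\phi,\psi\in\mathcal X}\|\mathfrak P^N_*\delta_\phi - \mathfrak P^N_*\delta_\psi\|_{TV}
\]
satisfies $\alpha_{N_0}<1$ for some $N_0$. The classical submultiplicativity $\alpha_{N+M}\leq \alpha_N\alpha_M$ then yields $\alpha_N\leq Ce^{-cN}$, which is directly the content of part (2). For part (1), invariance of $\rho$ gives $\mathfrak P^N_*\delta_\phi - \rho = \int (\mathfrak P^N_*\delta_\phi - \mathfrak P^N_*\delta_\psi)\,\rho(d\psi)$, so $\|\mathfrak P^N_*\delta_\phi - \rho\|_{TV}\leq 2\alpha_N$ by the triangle inequality.

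To prove $\alpha_{N_0}<1$ I would first upgrade strong Feller (in the Getoor sense $B_b\to C_b$) to the ``ultra Feller'' statement that $\phi\mapsto \mathfrak P^m_*\delta_\phi$ is TV-continuous for some iterate $m$; this is a standard consequence of composing strong Feller operators and costs at worst a constant factor in the exponential rate. With TV continuity, $(\phi,\psi)\mapsto\|\mathfrak P^N_*\delta_\phi - \mathfrak P^N_*\delta_\psi\|_{TV}$ is continuous on the compact space $\mathcal X\times\mathcal X$ and the sup in $\alpha_N$ is attained at some $(\phi^{(N)},\psi^{(N)})$. I then argue by contradiction: if $\alpha_N=1$ for every $N$, compactness gives a subsequence with $\phi^{(N)}\to\phi^*$, $\psi^{(N)}\to\psi^*$. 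The identity $\mathfrak P^N_*\delta_\phi - \mathfrak P^N_*\delta_\psi = \mathfrak P^{N-M}_*(\mathfrak P^M_*\delta_\phi - \mathfrak P^M_*\delta_\psi)$ for $N\geq M$ (with $\mathfrak P^{N-M}_*$ a TV-contraction on signed measures of mass zero), combined with TV continuity of $\mathfrak P^M_*$, forces upon $N\to\infty$ that $\|\mathfrak P^M_*\delta_{\phi^*} - \mathfrak P^M_*\delta_{\psi^*}\|_{TV}=2$ for every $M$; i.e., these two iterates are mutually singular for all time.

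The contradiction closes by showing, under either hypothesis, that $\|\mathfrak P^M_*\delta_{\phi^*} - \mathfrak P^M_*\delta_{\psi^*}\|_{TV}\to 0$ as $M\to\infty$. For part (1), Krylov--Bogolyubov tightness together with uniqueness of $\rho$ gives $\mathfrak P^n_*\delta_\phi\to\rho$ weakly for every $\phi$; the ultra Feller property, which on compact $\mathcal X$ is equivalent to equicontinuity of the family $\{\mathfrak P^m F : |F|\leq 1\}$, then promotes weak convergence to TV convergence. For part (2), I would Jordan-decompose the difference $\xi_n = \mathfrak P^n_*\delta_{\phi^*} - \mathfrak P^n_*\delta_{\psi^*}$ into positive parts $\xi_n^\pm$, extract weak subsequential limits $\mu^\pm$ on the compact space $\mathcal X$, note that the assumed weak convergence to $0$ forces $\mu^+=\mu^-$, and apply the same equicontinuity to conclude TV convergence of $\mathfrak P_*\xi_n$ to $0$. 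The hardest step throughout is this weak-to-TV promotion: strong Feller alone only gives continuity of $\phi\mapsto\mathfrak P F(\phi)$ for each individual bounded measurable $F$, whereas the TV distance requires uniform control over $|F|\leq 1$, and this uniformity genuinely requires compactness of $\mathcal X$ to turn pointwise equicontinuity of $\{\mathfrak P^m F\}$ into uniform equicontinuity, after which weak convergence of measures transfers to uniform convergence of integrals.
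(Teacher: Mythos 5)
Your treatment of part (2) is essentially sound, and it is organized differently from the paper's: you run a Dobrushin-coefficient argument (submultiplicativity of $\alpha_N$, TV-continuity of $\phi\mapsto\mathfrak P_*^m\delta_\phi$ from the ultra Feller upgrade, attainment of the supremum by compactness, and a contradiction via weak subsequential limits of the Jordan parts of $\mathfrak P_*^n\delta_{\phi^*}-\mathfrak P_*^n\delta_{\psi^*}$), whereas the paper first proves the \emph{uniform} convergence $\sup_{\phi,\psi}\|\mathfrak P_*^N(\delta_\phi-\delta_\psi)\|_{TV}\to0$ by an Arzel\`a--Ascoli argument applied to $\phi\mapsto\mathfrak P_*^N(\delta_\phi-\delta_{\phi_0})$, and then gets the exponential rate from sub-multiplicativity of the operator norm on mass-zero measures after a Hahn--Jordan decomposition. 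Both routes use the same two inputs (strong Feller $\Rightarrow$ ultra Feller for $\mathfrak P^2$, and compactness of $\mathcal X$ to promote weak convergence to TV convergence), so your version is a legitimate alternative; what you gain is that you only need to exhibit one $N_0$ with $\alpha_{N_0}<1$, what you lose is that the maximizer-contradiction step is a bit more delicate than the soft Arzel\`a--Ascoli step.

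Part (1), however, has a genuine gap at its crux. You assert that ``Krylov--Bogolyubov tightness together with uniqueness of $\rho$ gives $\mathfrak P^n_*\delta_\phi\to\rho$ weakly for every $\phi$.'' Unique ergodicity only forces weak subsequential limits of the \emph{Ces\`aro averages} $\frac1N\sum_{n\le N}\mathfrak P_{nt}^*\delta_\phi$ to be invariant; limit points of the non-averaged sequence $\mathfrak P_t^*\delta_\phi$ need not be invariant, and the claimed convergence is exactly the periodicity obstruction that makes the discrete-time analogue of statement (1) \emph{false}: the deterministic flip on a two-point space is strong Feller on a compact state space with a unique invariant measure, yet $\mathfrak P_*^N\delta_\phi$ does not converge. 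Your proposed proof of (1) never uses strong continuity of the semigroup or the continuity of the time parameter, so as written it would equally ``prove'' this false discrete statement. This is precisely where the paper has to work: it shows $\mathfrak P_t^*$ is a compact operator on the space of mass-zero signed measures (via ultra Feller plus compactness), rules out the eigenvalue $1$ using the Ces\`aro convergence supplied by unique ergodicity, and then rules out all other unimodular eigenvalues $e^{i\theta}$ by a finite-dimensional semigroup/generator argument that genuinely exploits the continuous time parameter; Gelfand's formula then yields the exponential rate. To repair your argument you would either need to supply an independent proof that $\mathfrak P_t^*\delta_\phi-\mathfrak P_t^*\delta_\psi\to0$ weakly (so that your part (2) machinery applies), or import an aperiodicity/spectral argument of the paper's type; neither is a routine fix.
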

The first result states that in \textit{continuous time}, compactness plus the strong Feller property automatically allows us to upgrade unique ergodicity to the seemingly much stronger property of a spectral gap with uniform multiplying constant. It is clear that Theorem \ref{tvc} can be deduced immediately from Theorem \ref{mr} using this first result. In discrete time the analogous statement is simply not true even on finite state spaces. However the second result states that even in discrete time, compactness plus the the strong Feller property allows us to upgrade pointwise convergence of differences in the weak topology to a spectral gap with uniform multiplying constant. It is clear that Theorem \ref{tvc} is immediate from Theorem \ref{1f1s} using this second result.

The monograph of Meyn-Tweedie-Glynn \cite[Chapter 16]{MT} contains many abstract and powerful results for Markov processes on general state spaces, however despite the extensive literature we were not able to find the above two results in efficient and readily-applicable forms. Since these results are evidently quite useful in problems concerning KPZ and related areas, we have included short and self-contained proofs of these abstract results, independent of one another and different from the arguments of \cite{MT}. See Theorems \ref{tv3} and \ref{expo} below. 
\\
\\
\textbf{Outline:} Section 2 recalls some standard facts about the Robin-boundary stochastic heat equation that will be useful going forward. Section 3 will prove Theorem \ref{mr}. Section 4 will prove Theorem \ref{1f1s}. The methods and results of Sections 3 and 4 should be viewed as disjoint from each other, and both sections contain an independent proof of Theorem \ref{tvc}.
\\
\\
\textbf{Acknowledgements:} I thank Yu Gu, Ivan Corwin, and Konstantin Matetski for very helpful discussions and for reading portions of the draft. I thank Ran Tao for finding some mistakes and typos.

\section{Properties of the Robin SHE and well-posedness of its Markov semigroup on the quotient space $\mathcal X$}

In this section we derive some bounds on the Robin-boundary stochastic heat equation that are standard 
but will be very important in later sections. We also prove existence of an invariant measure. The results described here are still valid for noises that white in time but colored in space, with some minor modification of the proofs. Some other boundary conditions such as Dirichlet (zero) conditions on one endpoint or both can also be considered (these correspond to $A=\infty$ or $B=\infty)$, with the only exception that the strict positivity result (Proposition \ref{nega}) will not hold in case of the latter.

Consider the solution of the multiplicative stochastic heat equation with Robin boundary condition on $[0,1]$:

\begin{equation}\partial_t z = \partial_x^2 z + z\xi,\;\;\;\;
\partial_x z(t,0) =Az(t,0), \;\;\;\; \partial_x z(t,1) = B z(t,1).\label{mshe}
\end{equation}

Here (and henceforth) $A,B$ are just some fixed real numbers. The formal definition of such a process is a $\mathcal F^\xi$-adapted process $z(t,\cdot)$ such that a.s. for all $t\ge0$ and $x\in [0,1]$ one has 
\begin{equation}z(t,x) = \int_{[0,1]} p^{A,B}_t (x,y) z_0(dy)+\int_0^t\int_{[0,1]} p^{A,B}_{t-s}(x,y) z(s,y) \xi(ds,dy) ,\label{mild}
\end{equation}where the initial condition $z_0$ is a fixed deterministic tempered distribution supported on $[0,1]$, and the latter is an Itô integral with $p^{A,B}$ the Robin heat kernel \cite{CS18,Parekh}, that is, the fundamental solution of the heat equation whose boundary conditions at $x=0$ and $x=1$ match those of \eqref{mshe}.

We are interested in the invariant measures for this process. However, there is no \textit{bona fide} invariant measure since $cz$ is a solution for all $c>0$, whenever $z$ is. Consequently the invariant measures must be infinite measures and cannot be probability measures. 

We fix this problem by constructing state spaces $\mathcal X$ and $\mathcal Y$ consisting of \textit{equivalence classes} of functions, which is where $z(t,\cdot)$ will live. More precisely, we define $\mathcal X$ to be the set of all finite non-negative non-zero Borel measures on $[0,1]$ modulo the relation $\mu \sim c\mu$ for all $c>0$. For equivalence classes $[\mu],[\nu]\in \mathcal X$ we define their distance through the metric 
\begin{equation}\label{xspace}d_\mathcal X([\mu],[\nu]):= d_{\text{Proh}} \bigg( \frac{\mu}{\mu[0,1]}, \frac{\nu}{\nu[0,1]}\bigg),
\end{equation}
where $d_{\text{Proh}}$ is a Prohorov metric on the space of probability measures on $[0,1]$ (i.e., any metric which metrizes the weak topology), satisfying $d_{\text{Proh}}(\mu,\nu)\leq \|\mu-\nu\|_{TV}$. An explicit example of such a metric $d_{\text{Proh}}$ is given by $$d_{\text{Proh}}(\mu,\nu) = \sup_{\|g\|_{C^1[0,1]} \leq 1} \bigg|\int_{[0,1]} g \;d(\mu -\nu)\bigg|,$$
where $\|g\|_{C^1[0,1]}:= \|g\|_{C[0,1]}+\|g'\|_{C[0,1]}.$ An important fact about $\mathcal X$ is that it is compact (since any collection of probability measures on $[0,1]$ is automatically tight), which will be a very useful simplifying tool later. 

Now fix some $\kappa\in(0,1/2)$. For separability reasons, we take the convention that $C^\kappa[0,1]$ is defined as the \textit{closure of smooth functions} inside the usual Banach space of $\kappa$-Holder continuous functions $[0,1]\to \Bbb R$. Now we define another metric space $\mathcal Y$ to be the set of all strictly positive functions on $[0,1]$ lying in $C^{\kappa}[0,1]$, modulo the equivalence relation $f\sim cf$ for all $c>0$. We define their distance through the metric 
\begin{equation}\label{yspace}d_\mathcal Y([f],[g]):= \bigg\| \log(f)-\log(g) - \int_{[0,1]} \big(\log(f)-\log(g)\big)\bigg\|_{C^{\kappa}[0,1]}.\end{equation}
For $d_\mathcal Y$ any equivalent metric could also be used, such as $\big\| \log(f)-\log(g) - \big(\log(f(0))-\log(g(0))\big)\big\|_{C^{\kappa}}.$ One may easily verify that this is a complete metric space and that it embeds densely into $\mathcal X$ by identifying a continuous function $f$ with the measure $\mu(A) = \int_A f(x)dx.$ Certainly closed bounded subsets in $\mathcal Y$ are still closed in $\mathcal X$, and are thus compact in $\mathcal X$. This already implies that all closed sets (hence all open sets) in $\mathcal Y$ are Borel measurable in $\mathcal X$, which will be important in the next section.

We will next establish global well-posedness of the Markov semigroup of \eqref{mshe} on the space of finite Borel measures on $[0,1]$, and show that the process $z(1,\cdot)$ already takes values in $C^\kappa$ almost surely for $\kappa<1/2$. Furthermore we will derive a bound on the expected $C^{\kappa}$ norm which depends only on the total mass of the initial data, which will in particular allow us to prove (in Proposition \ref{upgrade} and Corollary \ref{SFP}) that $[z(t,\cdot)]$ is strong Feller on $\mathcal X$, using the knowledge from \cite{KM22} that it is strong Feller on $\mathcal Y$.

We will need to use some purely deterministic facts about the Robin heat kernel which we summarize without proof here, see e.g. \cite[Section 2.5]{Fri} for a reference on how to obtain these facts:
\begin{enumerate}
    \item For each $x\in [0,1]$ the function $(t,y)\mapsto p_t^{A,B}(x,y)$ = $p_t^{A,B}(y,x)$ solves the heat equation on $\Bbb R_+\times [0,1]$ with initial data $\delta_x$, and furthermore $\partial_y|_{y=0} p_t^{A,B}(x,y) = Ap_t^{A,B}(x,0) $ and $\partial_y|_{y=1} p_t^{A,B}(x,y) = Bp_t^{A,B}(x,1) $ for all $x,t$. It is a symmetric kernel in the sense that $p_t^{A,B}(x,y) = p_t^{A,B}(y,x).$

    \item One may write \begin{equation}p_t^{A,B}(x,y) = p_t^{neu}(x,y) \mathbf E_{Br[0,t]}^{x\to y}\bigg[ e^{-AL_0^W(t) - BL_1^W(t)}\bigg],\label{fk1}
    \end{equation}
    where $p_t^{neu}$ is the Neumann heat kernel $(A=B=0)$ and the expectation is with respect to a Brownian bridge of diffusive rate 2 on the time interval $[0,t]$ starting from $x$ and ending at $y$ that is reflected at the boundaries of $[0,1]$ (so that it always stays inside the interval). Here $L_a^W(t)$ equals the local time of $W$ at spatial location $a$ up to time $t$. This equation reflects a Feynman-Kac decomposition for the Robin boundary heat equation, where the boundary condition is reinterpreted as a special case of \textit{Neumann }boundary heat equation, but with a multiplicative forcing term consisting of two Dirac masses of weights $-A$ and $-B$ at the respective boundaries $x=0$ and $x=1$. In particular \eqref{fk1} implies the \textit{strict positivity} of the kernel: $p_t^{A,B}(x,y)>0$ for all $A,B\in\Bbb R$, all $x,y\in[0,1]$ and all $t>0$. 

    \item Similarly to the ordinary heat kernel, for any integers $k_1,k_2\ge 0$ one has the following bound uniformly over all $(x,y,t)\in [0,1]^2\times(0,T]$
    \begin{equation}
        \partial_t^{k_1}\partial_x^{k_2}p_t^{A,B}(x,y)=\partial_x^{2k_1+k_2}p_t^{A,B}(x,y) \leq C (|x-y|+t^{1/2})^{-(1+2k_1+k_2)},\label{p=p0+e}
    \end{equation}
    where $C$ may depend on $k_1,k_2,T,A,B$ but not on $x,y,t$. Since $ \partial_y^2p_t(x,y)=\partial_x^2p_t(x,y)$, the bound \eqref{p=p0+e} is actually equivalent to a seemingly more general bound $\partial_t^{k_1}\partial_x^{k_2}\partial_y^{k_3}p_t^{A,B}(x,y) \leq C (|x-y|+t^{1/2})^{-(1+2k_1+k_2+k_3)}$ for $k_3$ even. 
    Because of these bounds, one can hope to sensibly define $$(P^{A,B}\phi)(t,x):=\int_{[0,t)\times [0,1]} p_{t-s}^{A,B}(x,y) \phi(s,y)dyds$$ for certain tempered space-time distributions $\phi$ supported on $[0,\infty)\times [0,1].$ 
    One would intuitively expect $p^{A,B}$ to improve space-time regularity by two exponents because the singularity of $p^{A,B}$ along the space-time diagonal (with respect to the parabolic metric) is of order $-1$ by \eqref{p=p0+e}, whereas the parabolic dimension of the space-time is $-3$.
    Indeed let us define $C_\mathfrak s^\alpha([0,1]\times[0,T])$ to be all those tempered distributions (supported on that rectangle) that are of parabolic Hölder regularity of exponent $\alpha\in\Bbb R$, more precisely the completion of smooth functions on $[0,1]\times[0,T]$ with respect to the norm given by $$\|f\|_{C^{\alpha,\sigma}_\mathfrak s([0,1]\times[0,T])}:= \sup_{(t,x)\in[0,1]\times[0,T]} \sup_{\lambda\in (0,1]} \sup_{\varphi \in B_r} \lambda^{-\alpha}(f,S^\lambda_{(t,x)}\varphi)$$ where $(f,\varphi)$ denotes the pairing in $L^2([0,1]\times[0,T])$, where the scaling operators are defined by $S^\lambda_{(t,x)}\phi (s,y) = \lambda^{-3}\phi(\lambda^{-2}(t-s),\lambda^{-1}(x-y)),$ and where if $\alpha<0$ then $r:=-\lfloor\alpha\rfloor$ and $B_r$ is the set of all smooth functions of $C^r(\Bbb R^2)$-norm less than 1 with support contained in the unit ball of $\Bbb R^2$, and if $\alpha>0$ then $r=\lceil \alpha \rceil$ and $B_r$ is the set of all smooth functions of $C^0$ norm less than 1, supported on the unit ball of $\Bbb R^2$ that are orthogonal in $L^2(\Bbb R^2)$ to all polynomials of parabolic degree less than or equal to $r$. Then we have the \textit{Schauder estimate} which says that $P^{A,B}$ is a bounded linear operator from $C_\mathfrak s^{\alpha}$ to $C_\mathfrak s^{\alpha+2}$ for all $\alpha <1$ ($\alpha\notin\Bbb Z)$, i.e., there exists $C=C(\alpha)>0$ such that \begin{equation}\label{schauder}\|P^{A,B}\phi\|_{C_\mathfrak s^{\alpha+2}([0,1]\times[0,T])} \leq C \|\phi\|_{C_\mathfrak s^{\alpha}([0,1]\times[0,T])},\end{equation}
    uniformly over all $\phi \in C_\mathfrak s^{\alpha}([0,1]\times[0,T]).$ The proof of the claim \eqref{schauder} follows from a standard and elementary argument using \eqref{p=p0+e} and following the definition of the Holder spaces, noting e.g. that for very small $\lambda$ the operation $P^{A,B}$ applied to $S^\lambda_{(t,x)}\varphi$ behaves at worst like $\lambda^2 S^\lambda_{(t,x)}\psi$ for some other smooth function $\psi$. Although we will not need to use \eqref{schauder} for $\alpha>1$ ($\alpha\notin \Bbb Z$), it is still true in that regime except that the bound only holds for $\phi$ in the closed linear subspace of $C_\mathfrak s^{\alpha}([0,1]\times[0,T])$ such that $\phi$ and all of its space-time derivatives respect the Robin boundary conditions (this allows for integration by parts without worrying about appearance of boundary terms). 
    Another corollary of \eqref{p=p0+e} is that for any (purely spatial) Scwhartz distribution $\varphi$ supported on $[0,1]$ we can also sensibly define the purely spatial convolution at fixed time $t$ as $$p_t^{A,B}\varphi(x) := \int_{[0,1]} p_t^{A,B}(x,y)\varphi(dy).$$
\end{enumerate}

\begin{thm}[Existence/Hölder estimates of Mild Solutions]\label{70} Fix some deterministic signed Borel measure $\mu$ on $[0,1]$ and let $|\mu|$ denote the sum of its positive and negative parts. Let $\xi$ be a space-time white noise on $[0,1]\times[0,T]$. There exists a unique $C[0,1]$-valued process $(z(t,\cdot))_{t \in(0, T]}$ which is adapted to $\mathcal F_t:=\sigma( \{\xi(t,\cdot) \}_{s\leq t})$, and satisfies \eqref{mild} with $z_0=\mu$. 

For $p\ge 1$ this mild solution satisfies
\begin{equation}\label{bound1}\Bbb E[|z(t,x)|^p]^{2/p}\leq Ct^{-1/2}\|\mu\|_{TV}(p_t^{A,B}|\mu|)(x)\leq Ct^{-1}\|\mu\|_{TV}^2,
\end{equation}
for some universal constant $C=C(A,B,p,T)$ not depending on $\mu$ or $(t,x)\in [0,T]\times[0,1]$. For $\kappa>0$ we furthermore have the existence of a universal constant $C=C(A,B,\kappa,p,T)>0$ independent of $\mu$ and $\epsilon \in (0,T)$ such that 
\begin{equation}\label{bound2}\Bbb E\big[\|z\|_{C_\mathfrak s^{1/2-\kappa}([0,1]\times[\epsilon,T])}^p\big]^{1/p} \leq C \epsilon^{-\frac34} \|\mu\|_{TV}. \end{equation}
Finally if $\mu_n$ is a sequence of Borel measures on $[0,1]$ converging weakly to $\mu$ and if $z_{\mu_n},z_{\mu}$ denote the associated solutions, then for all $\epsilon \in (0,T)$ we have the convergence in probability 
\begin{equation}\label{conv}\lim_{n\to\infty}\Bbb P\big( \|z_{\mu_n}-z_{\mu}\|_{C_\mathfrak s^{1/2-\kappa}([0,1]\times[\epsilon,T])}>\delta\big) = 0,\;\;\;\;\;\text{for all  } \delta>0.
\end{equation}
		
\end{thm}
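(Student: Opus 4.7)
\emph{Existence, uniqueness, and the moment bound \eqref{bound1}.} I would set up a Picard iteration for the map
$\Phi(z)(t,x) := (p_t^{A,B}\mu)(x) + \int_0^t\int_{[0,1]} p_{t-s}^{A,B}(x,y) z(s,y)\,\xi(ds,dy)$
in the space of $(\mathcal F_t)$-adapted processes with finite $L^p$ norms. The crucial kernel estimate is $p_s^{A,B}(x,y)^2 \leq C s^{-1/2}\, p_s^{A,B}(x,y)$, which follows from the $L^\infty$ bound in \eqref{p=p0+e} with $k_1=k_2=0$, combined with the Chapman--Kolmogorov identity $\int_{[0,1]} p_{t-s}^{A,B}(x,y)p_s^{A,B}(y,z)\,dy = p_t^{A,B}(x,z)$ inherited from the semigroup property of the Robin heat equation. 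Using BDG and Minkowski on $\Phi$, one verifies that the ansatz $\Bbb E[|z(t,x)|^p]^{2/p} \leq C_0 t^{-1/2}\|\mu\|_{TV}(p_t^{A,B}|\mu|)(x)$ is preserved by $\Phi$ on short time intervals (the double integral $\int_0^t (t-s)^{-1/2}s^{-1/2}\,ds$ collapses to a constant via beta-function identity), and extending to $[0,T]$ via restarting at intermediate times using the Markov property yields \eqref{bound1}. Uniqueness is immediate: the difference of two solutions satisfies the homogeneous equation with zero initial data, and the same iteration shows it must vanish.

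\emph{H\"older bound \eqref{bound2}.} I would apply Kolmogorov's continuity criterion to the two-parameter field $z$ on $[\epsilon, T]\times[0,1]$. Writing $z = p_t^{A,B}\mu + N(z)$: the free part's increments are controlled pointwise by the derivative bound \eqref{p=p0+e} applied uniformly for $t \geq \epsilon$, while the stochastic part's increments (between $(t,x)$ and $(s,y)$, say with $s<t$) split as (i) a stochastic integral over $[0, s]$ of the kernel difference $p_{t-r}^{A,B}(x,w) - p_{s-r}^{A,B}(y,w)$ and (ii) a stochastic integral over the thin strip $[s,t]$ of $p_{t-r}^{A,B}(x,w)$. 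Applying BDG to both, bounding $z(r,w)$ via \eqref{bound1}, and using \eqref{p=p0+e} to control the kernel difference at scale $\epsilon$ and the strip integral by $|t-s|^{1/2}$ yields moment bounds of the form
$$\Bbb E|z(t,x)-z(s,y)|^p \leq C_p\, \epsilon^{-3p/4}\,\|\mu\|_{TV}^p\, (|t-s|^{1/2}+|x-y|)^{p(1/2-\kappa)}$$
uniformly over $(t,x),(s,y) \in [\epsilon,T]\times[0,1]$. Kolmogorov's criterion then delivers \eqref{bound2} in the standard parabolic H\"older sense, which equivalently controls the norm $\|\cdot\|_{C^{1/2-\kappa}_\mathfrak{s}}$ introduced in the excerpt (the small gap to the closure of smooth functions is absorbed by proving the estimate for a slightly larger H\"older exponent $1/2-\kappa/2$).

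\emph{Convergence in initial data \eqref{conv}.} This is where the main obstacle lies: a direct application of \eqref{bound2} to $z_{\mu_n}-z_\mu$ only produces a bound proportional to $\|\mu_n-\mu\|_{TV}$, which stays bounded but need not tend to zero under weak convergence. My plan is to combine pointwise $L^2$ convergence with the uniform H\"older bound and an interpolation/grid argument. For pointwise convergence, I would expand $z_\mu(t,x) = \sum_{k\geq 0} I_k^\mu(t,x)$ in its Wiener chaos, where each $I_k^\mu$ is a $k$-fold iterated stochastic integral whose kernel depends linearly on $\mu$ through the single continuous factor $y_0 \mapsto p_{t_1}^{A,B}(y_1,y_0)$. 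Weak convergence $\mu_n \to \mu$ then gives pointwise convergence of these kernels to zero on the simplex, and the squared $L^2$ norms of the kernels are uniformly dominated using \eqref{bound1} applied to $|\mu_n|+|\mu|$; dominated convergence on the chaos series yields $\Bbb E|z_{\mu_n}(t,x)-z_\mu(t,x)|^2 \to 0$ for every $(t,x)$ with $t > 0$. To upgrade to convergence in probability in the H\"older norm, I would cover $[\epsilon,T]\times[0,1]$ by an $\eta$-net, use the pointwise convergence at the net and the uniform H\"older bound from \eqref{bound2} (which controls oscillations on scale $\eta$) to get $\|z_{\mu_n}-z_\mu\|_{C^0}\to 0$ in probability, then interpolate $\|\cdot\|_{C^{1/2-\kappa'}_\mathfrak{s}} \leq C \|\cdot\|_{C^0}^{1-\theta}\|\cdot\|_{C^{1/2-\kappa}_\mathfrak{s}}^{\theta}$ for $\kappa'>\kappa$ to obtain \eqref{conv} after relabeling $\kappa$. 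The identification step via the Wiener chaos expansion is the real heart of the argument; once it is in hand, the passage to the H\"older topology is soft functional analysis using the uniform bound of step two.
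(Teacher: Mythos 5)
Your proposal is correct in outline, but it reaches \eqref{bound2} and \eqref{conv} by mechanisms genuinely different from the paper's, so a comparison is worthwhile. For \eqref{bound1} your Picard scheme is essentially the paper's argument (the same kernel estimate $p_s^{A,B}(x,y)^2\le Cs^{-1/2}p_s^{A,B}(x,y)$ plus the semigroup identity); the one difference is that the paper does not restart at intermediate times but iterates the recursion $f_{n+1}(t)\le Ct^{1/2}\int_0^t(t-s)^{-1/2}s^{-1/2}f_n(s)\,ds$ twice to get factorial decay of the iterates on all of $[0,T]$ at once, which keeps the weighted form $t^{-1/2}\|\mu\|_{TV}(p_t^{A,B}|\mu|)(x)$ exact; with your restarting variant the initial data at the restart time is random, and recovering that precise weighted bound (rather than just $t^{-1}\|\mu\|_{TV}^2$) takes extra bookkeeping, so the all-orders iteration is cleaner. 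For \eqref{bound2} the paper does not run Kolmogorov--Chentsov on increments of $z$: it tests $z\xi$ against parabolically rescaled test functions, invokes the Kolmogorov-type criterion of Chandra--Weber to realize $z\xi$ as an element of $C^{-3/2-\kappa}_\mathfrak{s}$ with $p$-th moment bound $\lesssim\epsilon^{-1/2}\|\mu\|_{TV}$, and then splits $z=z_1^\epsilon+z_2^\epsilon$, where $z_1^\epsilon$ is the heat flow of $z(\epsilon/2,\cdot)$ (heat-kernel smoothing from $C^{-\kappa}$ to $C^{1/2-\kappa}$ costs the $\epsilon^{-1/4}$ that produces the $\epsilon^{-3/4}$) and $z_2^\epsilon$ is the heat convolution of $z\xi$, handled by the Schauder estimate \eqref{schauder}. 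Your increment route works, but note one point of care: in the stochastic-integral increment over $r$ near $0$ you must use the sharper weighted inequality in \eqref{bound1}, i.e. $\Bbb E[|z(r,w)|^p]^{2/p}\le Cr^{-1/2}\|\mu\|_{TV}(p_r^{A,B}|\mu|)(w)$, since the crude bound $Cr^{-1}\|\mu\|_{TV}^2$ is not integrable at $r=0$ (equivalently, restart at time $\epsilon/2$ as the paper effectively does). For \eqref{conv} the paper identifies $z_\mu(t,x)=\int_{[0,1]}z_t(a,x)\,\mu(da)$ through the propagators, proves joint continuity of $(a,x)\mapsto z_t(a,x)$ (via \eqref{bound2} with $\mu=\delta_x$ and a time-reversal symmetry in law), and gets pointwise convergence from weak convergence of $\mu_n$, then concludes with tightness in $C^{1/2-\kappa}_\mathfrak{s}$ from the uniform bound; your Wiener-chaos plus dominated-convergence argument achieves the same pointwise convergence (both exploit linearity of $\mu\mapsto z_\mu$), and your net-plus-interpolation upgrade is the same soft step. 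The trade-off: the propagator route also delivers the continuous modification of $z_t(a,x)$, which the paper reuses later (polymer measures, Section 4), while your chaos route avoids constructing that modification but yields nothing beyond the convergence statement itself.
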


\begin{proof} 
Throughout the proof we abbreviate $p_t^{A,B}$ as simply $p_t$ and $p_t^{A,B}\mu$ as $p_t\mu$. Define a sequence of iterates for $t\leq T$ and $x \in [0,1]$: \begin{align*}u_0(t,x)&:= (p_t\mu)(x) , \\ u_{n+1}(t,x)&:= \int_0^t \int_{[0,1]} p_{t-s}(x,y) u_n(s,y) \xi(dy,ds).\end{align*}
Note that $|p_t\mu(x)|\leq (p_t|\mu|)(x)$. To show existence of a solution it suffices to show that $\sum u_n$ converges in the appropriate Banach space, specifically we choose the space of $\mathcal F^\xi_t$-adapted processes $z$ such that $\sup_{x\in[0,1],s\leq T} s^{1/2} \Bbb E[|z(s,y)|^p]^{1/p}<\infty$. Now we define $$f_n(t):= \sup_{\substack{x\in [0,1] \\ s \in [0,t]}} s^{1/2} (p_s|\mu|(x))^{-1} \Bbb E [ |u_n(s,x)|^p]^{2/p}$$
where the RHS is defined as $+\infty$ if the stochastic integral defining $u_n$ fails to exist. For the proof it will be crucial that $(p_s|\mu|)(x)>0$ for all $s>0$ and $x\in[0,1]$ which is true as long as $\mu$ is not the zero measure.
		
Using the Itô isometry, the definition of $f_n$, and the fact that $p_{t-s}^{A,B}(x,y) \leq C_T(t-s)^{-1/2}$ on $[0,1]\times[0,T]$ (see equation \eqref{p=p0+e} or alternatively
\cite[Proposition 3.31]{Parekh}), we compute by applying Burkholder-Davis-Gundy and then Minkowski's inequality (in that order) \begin{align*}\Bbb E [|u_{n+1}(t,x)|^p]^{2/p} &\leq C_p\Bbb E\bigg[ \bigg( \int_0^t \int_{[0,1]} p_{t-s}(x,y)^2 u_n(s,y)^2dyds\bigg)^{p/2}\bigg]^{2/p}\\ &\leq C_p \int_0^t \int_{[0,1]} p_{t-s}(x,y)^2 \Bbb E[u_n(s,y)^p]^{2/p} dyds \\ &\leq C_p\int_0^t \int_{[0,1]} p_{t-s}(x,y)^2 \cdot s^{-1/2} (p_s|\mu|)(y) f_n(s) dy ds \\ &\leq C \int_0^t (t-s)^{-1/2}s^{-1/2} \bigg[\int_{[0,1]} p_{t-s}(x,y) (p_s|\mu|)(y) dy \bigg]f_n(s) ds \\ & = C (p_t|\mu|)(x) \int_0^t (t-s)^{-1/2}s^{-1/2}  f_n(s)ds \end{align*}
where we used the semigroup property in the final line. Multiplying both sides by $t^{1/2}(p_t|\mu|(x))^{-1}$, we find that \begin{align*}t^{1/2}(p_t|\mu|(x))^{-1} \Bbb E[|u_{n+1}(t,x)|^p]^{2/p} &\leq Ct^{1/2} \int_0^t (t-s)^{-1/2}s^{-1/2}f_n(s)ds .
\end{align*}
Notice from the definition that $f_n$ is an increasing function, therefore (by making a substitution $s=tu)$ one may see that the RHS of the last expression is an increasing function of $t$. It follows by taking a supremum that \begin{align*}f_{n+1}(t) &\leq Ct^{1/2} \int_0^t (t-s)^{-1/2}s^{-1/2} f_n(s)ds \end{align*} which we can iterate twice to obtain $$f_{n+2}(t) \leq C't^{1/2} \int_0^t s^{-1/2}f_n(s)ds.$$ 
Again using the fact that $p_t(x,y)\leq C_Tt^{-1/2}$, we obtain
\begin{equation}\label{tv1}(p_t|\mu|)(x)\leq C_T \|\mu\|_{TV} t^{-1/2}
\end{equation}
which implies that $\sup_{t\in [0,T]}f_0(t)\leq C_T\|\mu\|_{TV}$. Then using this bound we can iterate this recursion to obtain $$f_n(t) \lesssim \|\mu\|_{TV} t^{n/2}/(n/2)! $$ 
We have just proved that $$\Bbb E[|u_n(t,x)|^p]^{2/p} \leq C\|\mu\|_{TV}(p_t|\mu|)(x) t^{(n-1)/2}/(n/2)!$$ for a constant $C$ not depending on $t \in [0,T]$, $x\in [0,1]$, or $n \in \Bbb N$.
Since $z=\sum_n u_n$, we find that $$\Bbb E[|z(t,x)|^p]^{1/p} \leq \sum_{n\ge 0} \Bbb E[|u_n(t,x)|^p]^{1/p} \leq C'\|\mu\|_{TV}^{1/2}t^{-1/4} p_t|\mu|(x)^{1/2}$$ where $C'$ is obtained by summing the infinite series (bounding $t$ by the terminal time $T$), which which will not depend on $\mu,t,x$. This already proves the first inequality in \eqref{bound1}. The second inequality in \eqref{bound1} just comes by once again using the same bound \eqref{tv1} again. 

The proof of uniqueness is very similar. In fact, the difference of two mild solutions started from the same initial data would itself be a mild solution started from $\mu=0$, thus by \eqref{bound1} must be zero a.s. for all $(t,x)$. By the continuity statement which we prove next, the entire profile $(t,x)\mapsto z(t,x)$ must then be zero almost surely.


Now let us move onto the proof of \eqref{bound2}. Consider any smooth nonnegative test function $\phi(t,x)$ supported in $[\epsilon,T]\times [0,1]$. By applying Burkholder and then Minkowski (in that order) we find that 
\begin{align*}\Bbb E\bigg[\bigg| \int_{[0,1]\times [\epsilon,T]} \phi(s,y) z(s,y)\xi(ds,dy)\bigg|^p \bigg]^{2/p}&\leq C_p \Bbb E\bigg[\bigg( \int_{[0,1]\times [\epsilon,T]} \phi(s,y)^2 z(s,y)^2dsdy\bigg)^{p/2} \bigg]^{2/p}\\ &\leq C_p \int_{[0,1]\times [\epsilon,T]} \phi(s,y)^2 \Bbb E[|z(s,y)|^p]^{2/p} dsdy\\ &\stackrel{\eqref{bound1}}{\leq} C\|\mu\|_{TV}^2 \int_{[0,1]\times [\epsilon,T]} \phi(s,y)^2 s^{-1} dsdy\\ &\leq C\|\mu\|_{TV}^2 \epsilon^{-1} \|\phi\|_{L^2([0,1]\times [\epsilon,T])}^2,
\end{align*}
where $C$ is independent of $\mu$ and $\epsilon.$ 

For $\lambda>0$ and $(t,x)\in\Bbb R^2$ define the scaling operator $S^\lambda_{(t,x)} : \mathcal S(\Bbb R^2) \to \mathcal S(\Bbb R^2)$ by $$\big(S^\lambda_{(t,x)}\phi\big)(s,y) := \lambda^{-3}\phi(\lambda^{-2}(s-t),\lambda^{-1}(y-x)).$$ The previous bound can now be interpreted as 
\begin{equation}\Bbb E\big[|(z\xi,S^\lambda_{(t,x)}\phi)|^p\big]^{1/p} \leq C\epsilon^{-1/2}\|\mu\|_{TV} \|S^\lambda_{(t,x)}\phi\|_{L^2}.\label{Hölderbound}
\end{equation}
We now notice that $\|S^\lambda_{(t,x)}\phi\|_{L^2}\leq C\lambda^{-3/2}$ independently of $(t,x) \in [\epsilon,T]\times[0,1]$ and independently of nonnegative smooth $\phi$ supported on $[\epsilon,T]\times[0,1]$ integrating to one, and independently of $\epsilon$. Using \cite[Theorem 2.7]{Chandra-Weber} this implies that $z\xi$ can be realized as a random element of $C^{-3/2-\kappa}_\mathfrak s([\epsilon,T]\times[0,1])$ and moreover $$\Bbb E [ \|z\xi\|_{C^{-3/2-\kappa}_\mathfrak s([\epsilon,T]\times[0,1])}^p]^{1/p}\leq C'\epsilon^{-1/2}\|\mu\|_{TV} .$$
Here $C'$ only depends on the choice of $\kappa>3/p$ and the constant $C$ appearing in \eqref{Hölderbound}, and is therefore independent of $\epsilon,\mu$. Letting $z_{\epsilon/2}(x):= z(\epsilon/2,x)$, we also have by Minkowski that for all space-time test functions $\phi$
\begin{align*}\Bbb E\bigg[\bigg|\int_{[0,1]} \phi(y) z(\epsilon/2,y)dy\bigg|^p\bigg]^{1/p} &\leq \int_{[0,1]} \phi(y)\Bbb E[|z(\epsilon/2,y)|^p]^{1/p}dy \\ &\stackrel{\eqref{bound1}}{\leq} C\int_0^1 \phi(y) \epsilon^{-1/2} \|\mu\|_{TV} dy \\&\leq C' \|\mu\|_{TV}\epsilon^{-1/2}\|\phi\|_{L^1[0,1]},\end{align*} 
so that by a similar reasoning we have the bound that $\Bbb E[\|z_{\epsilon/2} \|_{C^{-\kappa}[0,1]}^p]^{1/p} \leq C'\epsilon^{-1/2}\|\mu\|_{TV},$ where $C$ may depend on the choice of $\kappa<-1/p$ but not $\epsilon$ or $\mu.$ 

For $\kappa>0$ one defines the space $C^{-\kappa}[0,1]$ to be the closure of smooth functions on $[0,1]$ with respect to a norm given by $\sup_{x\in[0,1]}\sup_{\lambda\in (0,1]} \sup_{\phi\in B_r} \lambda^{-\alpha} (f,A^\lambda_x\phi)_{L^2[0,1]}$ where $A^\lambda_x\phi(y) = \lambda^{-1} \phi(\lambda^{-1}(y-x))$ and $B_r$ is the set of smooth functions supported on $[-1,1]$ of $C^r(\Bbb R)$-norm less than 1 with $r:=-\lceil \kappa\rceil$. Now by the smoothing effect of the Robin heat kernel, we have a deterministic bound $ \|p_t z_{\epsilon/2}\|_{C^{1/2-\kappa}[0,1]} \leq Ct^{-1/4} \|z_{\epsilon/2}\|_{C^{-\kappa}[0,1]}$, which is a consequence of \eqref{p=p0+e} (see also \cite[Section 4.3]{Hai09} for another proof via analytic semigroups). 
In particular by setting $t=\epsilon/2$ and taking expectation we have $$ \Bbb E[\|p_{\epsilon/2}z_{\epsilon/2}\|_{C^{1/2-\kappa}[0,1]}^p]^{1/p} \leq C\epsilon^{-3/4} \|\mu\|_{TV}.$$ 
where $C$ is independent of $\epsilon$ or $\mu$. Now let us write for $t>\epsilon$ 
$$z(t,x) = p_{t-\epsilon}(p_{\epsilon/2}z_{\epsilon/2})(x) + \int_{\epsilon/2}^t \int_{[0,1]} p_{t-s}(x,y)z(s,y)\xi(dy,ds)=:z_1^\epsilon(t,x)+z_2^{\epsilon}(t,x).$$ 
Now for $f\in C^{1/2-\kappa}[0,1]$ if we define $P_f(t,x)= p_tf(x)$ then one directly shows the bound (independently of $f$) that $\|P_f\|_{C_\mathfrak s^{1/2-\kappa}([0,T]\times[0,1])} \leq C_T \|f\|_{C^{1/2-\kappa}[0,1]},$ consequently we have that $$\Bbb E[\|z_1^\epsilon\|^p_{C_\mathfrak s^{1/2-\kappa}([\epsilon,T]\times[0,1])}]^{1/p} \leq C\Bbb E[\|p_{\epsilon/2}z_{\epsilon/2}\|_{C^{1/2-\kappa}[0,1]}^p]^{1/p} \leq C\epsilon^{-3/4} \|\mu\|_{TV}.$$ Now to bound $z_2$ we notice that $z_2$ is just the space-time convolution of the field $z\xi|_{[\epsilon/2,T]\times[0,1]}$ with the Robin heat kernel, therefore by the Schauder estimate \eqref{schauder} one has $$\Bbb E [ \|z_2\|_{C_\mathfrak s^{1/2-\kappa}([\epsilon,T]\times[0,1])}^p]^{1/p} \leq C \Bbb E [ \|z\xi\|_{C^{-3/2-\kappa}_\mathfrak s([\epsilon/2,T]\times[0,1])}^p]^{1/p}\leq C'\epsilon^{-1/2}\|\mu\|_{TV} ,$$ completing the proof of the claim \eqref{bound2}. 

Finally, to prove \eqref{conv} we will need to use the notion of \textbf{propagators}. For each fixed realization of $\xi$, let $z_t(a,x)$ denote the solution evaluated at $(t,x)$ when started from initial condition $\delta_a$. We now claim the existence of a modification of this process such that one has joint continuity in all three variables $(t,a,x),$ so long as $t$ is bounded away from $0$. This is easily justified by first noting that for fixed $x$, one has continuity of the process $(t,a)\mapsto z_t(x,a)$ e.g. by \eqref{bound2} with $\mu=\delta_x$, and secondly noting that one has an equality in the sense of finite-dimensional distributions $(z_t(a,x))_{a\in[0,1]} \stackrel{d}{=} (z_t(x,a))_{a\in[0,1]}$ for each \textit{fixed} $t>0$ and $x\in [0,1]$, which is clear by a time reversal of the noise on the interval $[0,t]$. Now suppose $\mu_n\to \mu$ weakly as Borel measures on $[0,1]$. By the uniform boundedness principle this implies that $\sup_{n\in\Bbb N} \|\mu_n\|_{TV}<\infty.$ Therefore, by \eqref{bound2} we have tightness of $(z_{\mu_n})_{n\ge 1}$ in the space $C_\mathfrak s^{1/2-\kappa}([\epsilon,T]\times[0,1]),$ for every $\epsilon,\kappa>0$. We just need to show therefore that $z_{\mu_n}(t,x)-z_\mu(t,x)$ converges to 0 in probability, for each \textit{individual} value of $(t,x)$. We claim that $$z_{\mu_n}(t,x) = \int_{[0,1]} z_{t}(a,x)\mu_n(da) \to \int_{[0,1]} z_{t}(a,x)\mu(da) = z_{\mu}(t,x).$$ The middle convergence statement is clear by continuity of the propagators in the $a$ variable, thus we just need to justify the two equalities. In other words we need to show that $\int_{[0,1]} z_{t}(a,x)\mu(da) = z_{\mu}(t,x)$ for all finite and nonnegative Borel measures on $[0,1].$ The left side is easily shown to satisfy the mild equation \eqref{mild}, thus by uniqueness of solutions must agree with the right side. 
\end{proof}

In the following proposition, $f$ and $\varphi$ will be functions defined on the unit interval $[0,1]$ and whenever we write $f*\varphi$ it should be understood as the periodic mollification $$\varphi*f(x) = \int_{[0,1]} f(y)\varphi\big((x-y)\text{mod 1}\big)dy.$$

\begin{prop}[Feynman-Kac formula for smooth noise]\label{mollif}
Let $\varphi: [0,1] \to \Bbb R$ be a smooth function such that $\int_{[0,1]} \varphi(u)du=1$. Assume that $\varphi$ is such that $f\mapsto \varphi * f$ is a contraction on $L^2[0,1]$ (which is always the case if $\varphi$ is non-negative). Define $\xi^{\varphi}(t,x)$ to be the spatial periodic convolution of $\xi$ by $\varphi$. Define the mild solution of the stochastic heat equation $z^\varphi$ driven by $\xi^\varphi$ exactly as in the Duhamel formula \eqref{mild} but with $\xi$ replaced by $\xi^{\varphi}$. Then the solution exists and satisfies both \eqref{bound1} and \eqref{bound2} as well as \eqref{conv} in Theorem \ref{70}. Moreover the constants appearing in those bounds are independent of $\varphi$. In particular the Markov semigroup for this equation is also globally defined on the space $\mathcal X$ from \eqref{xspace} and has at least one invariant measure.

Furthermore we may express the solution started from initial data $\mu$ as $$z^\varphi(t,y) = \int_{[0,1]} p_t^{neu}(x,y)\mathbf E_{Br[0,t]}^{x\to y}\bigg[ e^{\int_0^t \xi^\varphi(t-s,W_s)ds -\frac12(\varphi *\varphi)(0)t-AL_0^W(t) - BL_1^W(t)}\bigg]\mu(dx).$$ Here $p_t^{neu}$ is the Neumann heat kernel $(A=B=0)$ and $\mathbf E_{Br[0,t]}^{x\to y}$ is the expectation is with respect to a Brownian bridge of diffusive rate 2 on the time interval $[0,t]$ starting from $x$ and ending at $y$ that is reflected at the boundaries of $[0,1]$ (so that it always stays inside the interval), independent of $\xi$. Here $L_a^W(t)$ equals the local time of $W$ at spatial location $a$ up to time $t$. The quantity $\int_0^t \xi^\varphi(t-s,W_s)ds$ appearing in the exponential needs to be understood as a stochastic integral. 

Finally, if we take a sequence $\varphi_n$ of such functions such that $\|\varphi_n *f - f\|_{L^2}\to 0$ as $n\to\infty$ for all $f\in L^2[0,1]$, then for any fixed initial data $\mu$, one has convergence of the associated solutions $z^{\varphi_n}$ to the solution of \eqref{mshe} driven by space-time white noise. Convergence here is in probability with respect to the topology of $C^\kappa([0,1]\times[\epsilon,T])$ for all $\kappa\in (0,1/2)$ and all $\epsilon<T$. 
\end{prop}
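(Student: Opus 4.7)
My plan is as follows. First, I would show that every estimate in the proof of Theorem \ref{70} continues to hold for $z^\varphi$ with constants independent of $\varphi$. The only use of $\xi$ in that proof is through the Itô isometry / BDG inequality applied to $\int g\,\xi$ for some adapted integrand $g$; but one has the identity
\begin{equation*}
\int g(s,y)\,\xi^\varphi(ds,dy) \;=\; \int (\varphi*g(s,\cdot))(z)\,\xi(ds,dz),
\end{equation*}
and the assumption that $f\mapsto \varphi*f$ is an $L^2[0,1]$-contraction makes the BDG bound at each Picard step satisfy the same estimate $\Bbb E[\,\cdot\,]^{2/p}\le C_p\,\Bbb E[(\int g^2)^{p/2}]^{2/p}$ as in the white-noise case. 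Hence \eqref{bound1}, \eqref{bound2} and \eqref{conv} all hold uniformly in $\varphi$, and the Markov semigroup is globally well-defined on $\mathcal X$. Existence of an invariant measure then follows from Krylov--Bogolyubov on the compact space $\mathcal X$ together with the Feller property supplied by \eqref{conv}.

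For the Feynman--Kac formula, the plan is a Wiener-chaos identification. Expanding $z^\varphi=\sum_{n\ge 0} u_n^\varphi$ as a Picard series, each iterate unfolds into an $n$-fold ordered Wiener integral against $\xi^\varphi$ of a product $\mu(dx)\,p^{A,B}_{s_1}(x,z_1)\prod_i p^{A,B}_{s_{i+1}-s_i}(z_i,z_{i+1})\,p^{A,B}_{t-s_n}(z_n,y)$. Substituting the representation \eqref{fk1} of each propagator as a reflected Brownian bridge with exponential local-time weights, and invoking Markov consistency of the reflected Neumann bridge, these $n$ factors collapse into a single expectation $p_t^{neu}(x,y)\,\mathbf E_{Br[0,t]}^{x\to y}[\cdots]$ with a global weight $e^{-AL_0^W(t)-BL_1^W(t)}$. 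Conditioning on $W$ (which is independent of $\xi$) and applying a stochastic Fubini then exchanges the bridge expectation with the iterated Wiener integrals, leaving, for each fixed path, an ordered iterated Itô integral in $\xi$ whose integrand at time $r$ is $\varphi(W_r-\,\cdot\,)$. Letting $X_s := \int_0^s \xi^\varphi(t-r,W_r)\,dr$ (an Itô integral given $W$, with quadratic variation $\langle X\rangle_s = s(\varphi*\varphi)(0)$ coming from the spatial covariance of $\xi^\varphi$), summing via the Doléans--Dade identity
\begin{equation*}
\sum_{n\ge 0}\int_{0<s_1<\cdots<s_n<t}dX_{s_1}\cdots dX_{s_n} \;=\; \exp\!\bigl(X_t - \tfrac12\langle X\rangle_t\bigr)
\end{equation*}
then produces the stated formula. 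I expect the main obstacle to lie in this step, specifically in rigorously justifying the stochastic Fubini exchange and the $L^2$-convergence of the Doléans--Dade series; both follow from the $\varphi$-uniform $L^p$-bounds of Step 1 applied to partial sums of the chaos expansion, together with the independence of $W$ and $\xi$.

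Finally, for the convergence $z^{\varphi_n}\to z$ when $\|\varphi_n*f-f\|_{L^2}\to 0$ for every $f\in L^2[0,1]$, I would first use the $\varphi$-uniform version of \eqref{bound2} from Step 1 to obtain tightness of $(z^{\varphi_n})$ in $C_\mathfrak s^{1/2-\kappa}([0,1]\times[\epsilon,T])$, reducing the problem to pointwise convergence in probability. In the Picard/chaos expansion, each component $u_k^{\varphi_n}$ is an iterated Wiener integral against $\xi$ whose kernel has been convolved $k$ times with $\varphi_n$; by the $L^2$-isometry of iterated Wiener integrals, $u_k^{\varphi_n}\to u_k$ in $L^2(\Omega)$ for each fixed $k$, where $u_k$ is the $k$-th Wiener chaos of the white-noise solution. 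The Picard moment bound from Step 1 shows the tail of the chaos series is summable uniformly in $n$, so the full sums converge in $L^2$, and tightness upgrades this to the claimed convergence in probability in $C_\mathfrak s^{1/2-\kappa}([0,1]\times[\epsilon,T])$.
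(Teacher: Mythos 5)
Your proposal is correct and follows essentially the same route as the paper's (sketched) proof: the $L^2$-contraction makes every estimate in the proof of Theorem \ref{70} carry over with $\varphi$-independent constants, the Feynman–Kac identity is obtained by matching the Picard/chaos expansion of the mild solution term by term with the exponential's Hermite expansion using the bridge representation \eqref{fk1}, and the final convergence is term-by-term chaos convergence upgraded to $C^\kappa$ via the $\varphi$-uniform bound \eqref{bound2} plus tightness. The only cosmetic differences are that you resum the Picard series into the Wick exponential via Doléans–Dade rather than expanding the exponential into Hermite powers, and you use the $L^2$-isometry of iterated Wiener integrals in place of the polynomial-chaos master theorem of \cite{CSZ}.
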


Here is a rough sketch of the proof: first of all, the fact that $\varphi$ is a contraction on $L^2$ ensures that all of the bounds derived in the proof of Theorem \ref{70} go through unchanged if we replace $\xi$ by $\xi^\varphi$ (in particular the bounds on the expected $p^{th}$ moment of $u_n$). Then to prove the Feynman-Kac representation, one expands the exponential $e^{\int_0^t \xi^\varphi(t-s,W_s)ds -\frac12(\varphi *\varphi)(0)t}$ as an infinite sum of Hermite powers of the noise. Then by using the representation \eqref{fk1} of the Robin heat kernel, the $k^{th}$ term in the expansion may be expressed as a $k$-fold iterated integral of products of the $p_t^{A,B}$ against the mollified noise, which is precisely the mild solution as defined in \eqref{mild}. The convergence result stated at the end of the proposition can be proved by showing term-by-term convergence of the resulting chaos expansions as $n\to \infty$ (using e.g. the master theorem for polynomial chaos \cite[Theorem 2.3]{CSZ}). This would give pointwise convergence in probability, but it can automatically be upgraded to convergence in $C^\kappa$ since we have the bound \eqref{bound2} where the constant is independent of $\varphi$ as described in the above proposition statement. We also refer the reader to the work \cite{Bertini-Cancrini} for a similar theorem.

\begin{prop}[Convolution property]\label{convo}
Consider the propagators $(z_{s,t}(x,y))_{x,y\in[0,1],t\ge s\ge  0}$ where $(t,y) \mapsto z_{s,t}(x,y)$ solves the stochastic heat equation \eqref{mshe} with initial data $\delta_x$ at time $s$, all coupled to the same noise $\xi$. Then $$\int_{[0,1]} z_{s,t}(x,r) z_{t,u}(r,y)dr = z_{s,u}(x,y)$$ for all $x,y\in[0,1]$ and all $s<t<u$ almost surely. Furthermore, this property remains true if we replace $\xi$ by its spatial mollification $\xi^\varphi$ as in Proposition \ref{mollif}.
\end{prop}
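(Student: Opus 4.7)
My plan is to prove the convolution property by showing that both sides of the claimed identity, viewed as functions of $(u,y)$ on $[t,\infty)\times[0,1]$, satisfy the same mild SHE on $[t,\infty)$ with the same (random) initial profile $z_{s,t}(x,\cdot)$ at time $t$, and then invoke the uniqueness part of Theorem \ref{70} (applied conditionally on $\mathcal F_t$, with respect to which the initial profile is deterministic). Fix $s<t<u$ and $x\in[0,1]$, and set
$$\tilde z(u,y):=\int_{[0,1]} z_{s,t}(x,r)\,z_{t,u}(r,y)\,dr.$$
Observe that $\tilde z(t,y)=z_{s,t}(x,y)$ by the initial condition $z_{t,t}(r,\cdot)=\delta_r$.

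First I would plug the mild equation for $z_{t,u}(r,y)$ (equation \eqref{mild} starting at time $t$ from $\delta_r$) into the definition of $\tilde z(u,y)$ and apply stochastic Fubini, justified by the $L^p$ bound in \eqref{bound1} together with the fact that $r\mapsto z_{s,t}(x,r)$ is a.s. bounded and continuous on $[0,1]$ (by \eqref{bound2}). This yields
$$\tilde z(u,y)=\int_{[0,1]} p_{u-t}^{A,B}(r,y)\,z_{s,t}(x,r)\,dr+\int_t^u\!\!\int_{[0,1]} p_{u-v}^{A,B}(y,z)\,\tilde z(v,z)\,\xi(dv,dz),$$
which is the mild SHE starting from $z_{s,t}(x,\cdot)$ at time $t$.

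On the other hand, I would take the mild equation for $z_{s,u}(x,y)$ and split $\int_s^u=\int_s^t+\int_t^u$. Using the semigroup identity $p_{u-s}^{A,B}(x,y)=\int p_{u-t}^{A,B}(r,y)p_{t-s}^{A,B}(x,r)\,dr$ together with stochastic Fubini applied to the $\int_s^t$ piece, one recognizes
$$p_{u-s}^{A,B}(x,y)+\int_s^t\!\!\int p_{u-v}^{A,B}(y,z)z_{s,v}(x,z)\,\xi(dv,dz)=\int_{[0,1]} p_{u-t}^{A,B}(r,y)\,z_{s,t}(x,r)\,dr,$$
so that $z_{s,u}(x,y)$ also satisfies the mild SHE on $[t,\infty)$ with initial profile $z_{s,t}(x,\cdot)$ at time $t$. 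Since this initial profile is $\mathcal F_t$-measurable and the driving noise on $[t,\infty)$ is independent of $\mathcal F_t$, conditioning on $\mathcal F_t$ places us in the hypotheses of Theorem \ref{70}, and uniqueness of mild solutions forces $\tilde z(u,y)=z_{s,u}(x,y)$ for each fixed $(u,y)$. Joint almost sure equality across all $s<t<u$ and $x,y\in[0,1]$ is then obtained by selecting a countable dense set and using the continuity of the propagators in all variables established at the end of the proof of Theorem \ref{70}.

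The main obstacle I anticipate is the careful justification of stochastic Fubini and the conditional application of uniqueness, because $z_{s,t}(x,\cdot)$ is a random initial condition and Theorem \ref{70} is stated for deterministic data. If the direct route proves delicate, a safe fallback is to first establish the identity for the mollified noise $\xi^\varphi$ using Proposition \ref{mollif}: there the equation is a classical linear random parabolic PDE with smooth coefficients, so both sides are smooth solutions of the same well-posed Cauchy problem on $[t,\infty)\times[0,1]$ and the identity is immediate from linearity and classical uniqueness; one then passes to the white-noise limit in probability using the convergence statement in Proposition \ref{mollif}, which handles the "furthermore" clause at the same time. This also bypasses any subtlety in the stochastic Fubini step, since all integrations become classical at the mollified level.
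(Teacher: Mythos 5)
Your proposal is correct and is essentially the argument the paper delegates to its citation of \cite[Theorem 3.1(vii)]{AKQ11}: show that both sides, as functions of $(u,y)$, satisfy the mild equation \eqref{mild} on $[t,u]$ with the same $\mathcal F_t$-measurable initial profile $z_{s,t}(x,\cdot)$, and conclude by uniqueness (applied conditionally on $\mathcal F_t$, using independence of the noise after time $t$), with the mollified case and the simultaneous-in-all-parameters statement handled exactly as you indicate. The paper itself gives no further details, so nothing in your outline conflicts with it.
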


The proof is the same as in the full line case, see \cite[Theorem 3.1(vii)]{AKQ11}.

\begin{prop}[Existence of negative moments]\label{nega}
Fix a space-time white noise $\xi$ on $[0,1]\times [0,T]$ and fix $A,B\in\Bbb R$. Consider the process $\mathbf{z}:=(z_t(x,y))_{x,y\in[0,1],t\ge 0}$ where $(t,y) \mapsto z_t(x,y)$ solves the stochastic heat equation \eqref{mshe} with initial data $\delta_x$. Then for all $p\ge 1$, all $\kappa<1/2$, and all $T>\epsilon>0$ we have that $$\Bbb E \bigg[\big\| \mathbf{z}\big\|_{C^{\kappa}_\mathfrak s([\epsilon,T]\times[0,1]^2)}^p+\big\| \mathbf{z}^{-1}\big\|_{C^{\kappa}_\mathfrak s([\epsilon,T]\times[0,1]^2)}^p \bigg]<\infty.$$ Here $\mathbf{z}^{-1} = 1/\mathbf{z}$ is the reciprocal of the process, to be interpreted as $+\infty$ if $z$ is not strictly positive.
\end{prop}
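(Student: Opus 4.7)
The positive moment bound on $\mathbf z$ comes directly from Theorem \ref{70} applied with $\mu = \delta_x$: that theorem yields a uniform-in-$x$ moment bound on $\|z_t(x,\cdot)\|_{C^\kappa_\mathfrak s([\epsilon,T]\times[0,1])}$, and joint Hölder regularity in the starting point $x$ follows from Kolmogorov's continuity theorem combined with the time-reversal identity $(z_t(a,x))_a \stackrel{d}{=} (z_t(x,a))_a$ which was exploited at the end of the proof of Theorem \ref{70}.

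The main task is the uniform pointwise negative moment bound $\sup_{(t,x,y)\in[\epsilon,T]\times[0,1]^2} \Bbb E[z_t(x,y)^{-p}]<\infty$ for every $p\ge 1$. I would argue via the mollified Feynman-Kac representation of Proposition \ref{mollif}: viewing $F^\varphi := \log z^\varphi_t(x,y)$ as a functional on the Gaussian Hilbert space of $\xi$, direct differentiation gives
$$D_{(u,v)}F^\varphi = \mathbf E^{x\to y}_{\widetilde{Br}[0,t]}\big[\varphi(W_{t-u}-v)\big],$$
where $\widetilde{Br}$ denotes the reflecting bridge measure reweighted by the Wick exponential and local-time factors of the integrand in Proposition \ref{mollif}. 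Symmetrizing $\|DF^\varphi\|_{L^2([0,T]\times[0,1])}^2$ over a pair of independent bridges $(W,W')$ and then taking the $\xi$-expectation produces a Girsanov-type cancellation in which the divergent Wick constant $\tfrac12\|\varphi\|_{L^2}^2 t$ is absorbed, leaving a finite bound controlled by the square-integrability in time of the reflecting Brownian bridge density (which on $[0,1]$ is $\sim s^{-1/4}$ near $s = 0$ and $s = t$). The Borell-Sudakov-Tsirelson inequality then gives a sub-Gaussian lower tail for $F^\varphi$ around its median, uniform in both $\varphi$ and $(t,x,y)\in[\epsilon,T]\times[0,1]^2$; combining with an upper bound on the median obtained by applying Markov to the first-moment identity $\Bbb E[z^\varphi_t(x,y)] = p_t^{A,B}(x,y)$ (which is bounded above and below on the compact set) yields uniform negative moment estimates for $z^\varphi$. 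Passing $\varphi \to \delta$ via the $C^\kappa$ convergence in probability from Proposition \ref{mollif} and applying Fatou's lemma then transfers the bound to $\mathbf z$ itself.

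The joint Hölder regularity of $\mathbf z^{-1}$ finally follows from Kolmogorov's continuity theorem on the parabolic domain $[\epsilon,T]\times[0,1]^2$ applied to the pointwise identity
$$|z_t^{-1}(x,y) - z_{t'}^{-1}(x',y')| = |z_{t'}(x',y') - z_t(x,y)| \cdot z_t(x,y)^{-1}z_{t'}(x',y')^{-1},$$
which by the generalized Hölder inequality combines the moment-Hölder estimate \eqref{bound2} on differences of $\mathbf z$ with the uniform negative moment bound on $\mathbf z^{-1}$ to yield $L^p$ differences scaling like $d_\mathfrak s(\cdot,\cdot)^{(1/2-\kappa)p}$; taking the moment exponent large gives Hölder regularity of any exponent less than $1/2$. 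The main obstacle is the symmetrized Malliavin bound in the negative-moment step — one must verify that the Wick cancellation indeed absorbs the divergent $\|\varphi\|_{L^2}^2$ term so that the resulting Borell-TIS constants can be taken uniform in $\varphi$ — and it is also precisely the Robin nature of the boundary condition that keeps $z$ strictly positive near the boundary and makes the statement well-posed in the first place (under Dirichlet conditions $z$ would vanish at the boundary and Proposition \ref{nega} trivially fails, consistent with the remark at the start of this section).
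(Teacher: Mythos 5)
Your treatment of the positive moments of $\mathbf z$ and of the final step (turning a uniform pointwise bound on $\Bbb E[z_t(x,y)^{-p}]$ into joint H\"older regularity of $\mathbf z^{-1}$ via the identity $|z^{-1}-\tilde z^{-1}|=z^{-1}\tilde z^{-1}|z-\tilde z|$, H\"older's inequality, \eqref{bound2}, time reversal and Kolmogorov--Chentsov) coincides with the paper's Step 3 and is fine. The gap is in the core negative-moment step. Borell--TIS (and likewise the Nourdin--Viens type refinements) requires the functional to be Lipschitz in the Cameron--Martin norm with a \emph{deterministic} constant, i.e.\ an almost-sure bound on $\|DF^\varphi\|_{L^2([0,T]\times[0,1])}$ (or at least on a conditional expectation of it), not merely a bound on its $\xi$-expectation. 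Here $\|DF^\varphi\|_{L^2}^2$ is the replica overlap $\big(\mathbf E^{\xi,\varphi}_{\mathrm{polymer}}\big)^{\otimes 2}\big[\int_0^t(\varphi\ast\varphi)(W_s-W_s')\,ds\big]$ under the quenched polymer measure: it is a random quantity, unbounded over noise realizations, and its supremum over $\xi$ blows up as $\varphi\to\delta$ -- uniformity in $\varphi$ is exactly where the difficulty sits. The ``Girsanov-type cancellation after taking the $\xi$-expectation'' only controls an annealed average of the overlap (and even that requires handling the random partition-function denominators in the Gibbs weights), which at best yields a Poincar\'e/variance bound for $\log z^\varphi$; that is far from the Gaussian lower tail needed for negative moments of all orders, so the claimed ``sub-Gaussian lower tail uniform in $\varphi$'' does not follow from the argument as written.

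The paper's proof repairs precisely this point by localization rather than a global concentration inequality, following Moreno-Flores and Hu--Le. One first builds finite-dimensional martingale approximations $z^{n,k}_t(x,y)$ (conditional expectations of $z$ given spectral truncations of $\xi$), for which the Feynman--Kac/Gibbs representation is rigorous. Paley--Zygmund plus a second-moment computation (in which the Gibbs-weighted overlap is compared to the second moment of the equation driven by $\sqrt2\,\xi$) produce an event $A^{\lambda,n,k}_{t,x,y}$, of probability bounded below uniformly in $n,k$ and $(t,x,y)\in[\epsilon,T]\times[0,1]^2$, on which simultaneously $z^{n,k}_t(x,y)\ge\frac12 p^{A,B}_t(x,y)$ and the Gibbs-averaged overlap is at most $\lambda$. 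A Jensen plus Cauchy--Schwarz argument then gives, for \emph{every} realization $\xi$, the deterministic lower bound $z^{n,k}_t(x,y;\xi)\ge \frac12 p_t^{A,B}(x,y)e^{-\lambda^{1/2}\,\mathrm{dist}_{L^2}(\xi,A^{\lambda,n,k}_{t,x,y})}$, and the Gaussian isoperimetric inequality -- which needs only that the set have measure bounded below, not any Lipschitz property of $\log z$ -- converts this into the lower-tail bound $e^{-c(\log r)^2}$ uniformly in $n,k,t,x,y$; letting $n\to\infty$ then $k\to\infty$ gives \eqref{negmombo}. If you want to salvage your Malliavin-calculus route you would have to prove an almost-sure (or conditional) bound on the quenched overlap uniform in $\varphi$, which is essentially the same localization problem; as stated, the concentration step is the missing idea.
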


In particular we have strict positivity of the solution which is expected by classical results on the whole line such as such as \cite{Mueller}. Although such a result is standard, it seems that a proof has not been written down elsewhere for the Robin-boundary case (or even the Neumann case). Therefore we give a self-contained proof here for the sake of completeness, and also because some of the ideas in the proof (e.g. Feynman-Kac and approximation theory for the SHE) are instructive in understanding some of the proofs of the later sections as well. The proof will give an explicit tail decay on the above Holder norm proportional to $e^{-c(\log r)^2}$ for some $c>0$. Of course, $c$ will depend on the choice of $\epsilon,\kappa,A,B.$

\begin{proof}We break the proof into three steps.
\\
\\
\textit{Step 1.} In this step, we realize the solution $z$ of \eqref{mshe} as the limit of a two-parameter family of Banach-valued martingales. Consider the Cameron-Martin space $H:=L^2([0,T]\times[0,1])$ of the noise $\xi$. It will be convenient to write $L^2([0,T]\times[0,1]) = L^2[0,T]\otimes L^2[0,1] = : L^2_T\otimes L^2_X. $ We consider the orthonormal basis for $L^2_X$ given by $e_{2i}(x) = \sqrt{2}\sin(2\pi i(x-1/2))$ and $e_{2i+1}(x) = \sqrt{2}\cos(2\pi i(x-1/2))$ (here $i\ge 0$ but we exclude $e_0=0$), and we choose some orthonormal basis of smooth functions $\{f_j(t)\}_{j\ge 1}$ for $L^2_T$ whose exact form does not matter. 
With this in place, we now sample a space-time white noise on $[0,1]\times[0,T]$ by $$\xi = \sum_{j=1}^\infty \sum_{i=1}^\infty \xi_{ij} \; f_j\otimes e_i$$
where the $\xi_{ij}$ are IID $N(0,1)$. The infinite sum converges in some appropriately chosen Banach space whose norm is measurable on $H$ in the sense of L. Gross \cite{Gross}. We define the following hierarchy of approximations to $\xi$ for $k,n\in\Bbb N$:
$$\xi^k = \sum_{j=1}^\infty \sum_{i=1}^{2k} \xi_{ij} \; f_j\otimes e_i,$$
$$\xi^{n,k} = \sum_{j=1}^n \sum_{i=1}^{2k} \xi_{ij} \; f_j\otimes e_i.$$
We then define $\mathcal F_{n,k}$ and $\mathcal F_k$ to be the $\sigma$-algebras generated by $\xi^{n,k}$ and $\xi^k$, respectively. Note that $\xi^k$ can be written as the spatial mollification of $\xi$ where the mollifier (which is not necessarily positive or of unit $L^1$ norm but still satisfies the conditions of Proposition \ref{mollif}) equals a quotient of sine functions (the Dirichlet kernel).

Let $z_t(x,y)$ be as in the theorem statement. Because of the identity $\sum_{i=1}^{2k} e_i(x)^2 = 2k$ for all $x\in [0,1],$ one may show that $\Bbb E[z_t(x,y)|\mathcal F_k]$ is given by the solution $(t,y)\mapsto z_t^k(x,y)$ of the Ito equation $\partial_t z = \partial_x^2 z + z\xi^k$ started from $\delta_x$ (with the same boundary conditions). By Proposition \ref{mollif}, this can also be written as $$\Bbb E[z_t(x,y)|\mathcal F_k] = p_t^{neu}(x,y)\mathbf E_{Br[0,t]}^{x\to y} \bigg[ e^{\int_0^t \xi^k(t-s,W_s) ds - kt - AL_{0}^{W}(t) - BL_1^W(t)}\bigg]$$
where the expectation on the right side is with respect to a Brownian bridge on the time interval $[0,t]$ starting from $x$ and ending at $y$, reflected at the boundaries of $[0,1]$. Here $p_t^{neu}$ is the Neumann heat kernel on $[0,1]$. The integral $\xi^k(t-s,W_s)$ is well-defined as an Ito-integral for any continuous path $W$. Applying Fubini's theorem, we can also write 
\begin{align*}
z_t^{n,k}(x,y)&:=\Bbb E[z_t(x,y)|\mathcal F_{n,k}] \\&= \Bbb E[z_t^k(x,y)|\mathcal F_{n,k}] \\ &= p_t^{neu}(x,y)\mathbf E_{Br[0,t]}^{x\to y} \bigg[ \Bbb E\bigg[e^{\int_0^t \xi^k(t-s,W_s) ds}\bigg| \mathcal F_{n,k}\bigg] e^{- kt - AL_{0}^{W}(t) - BL_1^W(t)}\bigg] \\ &= p_t^{neu}(x,y)\mathbf E_{Br[0,t]}^{x\to y} \bigg[ e^{\int_0^t \xi^{n,k}(t-s,W_s) ds +\frac12 \sum_{i=1}^{2k} \sum_{j=n+1}^\infty \big(\int_0^tf_j(t-s)e_i(W_s)ds \big)^2- kt - AL_{0}^{W}(t) - BL_1^W(t)}\bigg] \\ &=p_t^{neu}(x,y)\mathbf E_{Br[0,t]}^{x\to y} \bigg[ e^{\int_0^t \xi^{n,k}(t-s,W_s) ds -\frac12 \sum_{i=1}^{2k}\sum_{j=1}^n  \big(\int_0^tf_j(t-s)e_i(W_s)ds \big)^2 - AL_{0}^{W}(t) - BL_1^W(t)}\bigg],
\end{align*}
where we used Gaussianity of the stochastic integrals (conditional on $W$) in the third equality and then orthonormality of $f_j$ in the final equality. The above expression is a martingale in the $n$ variable (for fixed $k$). By the Banach-valued martingale convergence theorem \cite[Theorem 1]{Ch}, we conclude that $z^k$ converges as $k\to \infty$ to $z$ in $C^\kappa([\epsilon,T]\times [0,1]^2)$ for all $\kappa<1/2$ and $\epsilon>0$, and we also conclude that (for each fixed $k\in \Bbb N)$ $z^{n,k}$ converges to $z^k$ as $n\to \infty$ in $C^\kappa([\epsilon,T]\times [0,1]^2)$. All convergences are almost sure and in $L^p$. Furthermore by Jensen's inequality, any conditional expectation necessarily contracts the $L^p$ norm, so we also have the following bounds for all $p\ge 1$:
\begin{align}\label{pbound}\sup_{n\ge 1} \Bbb E[z_t^{n,k}(x,y)^p] &\leq \Bbb E[z_t^k(x,y)^p],\;\;\;\forall k\in\Bbb N\\ \sup_{k\ge 1} \Bbb E[z_t^{k}(x,y)^p] &\leq \Bbb E[z_t(x,y)^p].\label{pbound2}
\end{align}
\textit{Step 2.} Now we establish the intermediate claim that 
\begin{equation}\label{negmombo}\sup_{(t,x,y) \in [\epsilon,T]\times[0,1]^2} \Bbb E [ z_t(x,y)^{-p} ] <\infty. 
\end{equation}
To do this, we will roughly follow the approach of \cite{HL} which in turn was inspired by \cite{MF}. Let $e_i,f_j$ be as in the previous step and define $$Q_{n,k}(s,t,x,y):= \sum_{j=1}^n\sum_{i=1}^{2k} f_{j}(s)f_{j}(t)e_{i}(x)e_{i}(y) = \Bbb E[ \xi^{n,k}(t,x)\xi^{n,k}(s,y)].$$ Then for continuous paths $W^1,W^2$ from $[0,t]\to[0,1]$, define $$\mathscr Q_{n,k} (t,W^1,W^2):= \int_0^t\int_0^t Q_{n,k}(t-s,t-r,W^1_r,W^2_s)drds.$$
Also define for continuous paths $W$ the quantity
$$\Gamma_t^{n,k}(W,\xi^{n,k}):=  e^{\int_0^t \xi^{n,k}(t-s,W_s)ds - \frac12 \mathscr Q_{n,k} (t,W,W)-AL_0^{W}(t) - BL_1^{W}(t)}.$$ By the results of Step 1, we can write $$z^{n,k}_t(x,y) = p_t^{neu}(x,y) \mathbf E_{Br[0,t]}^{x\to y} [\Gamma^{n,k}_t(W,\xi^{n,k})].$$ We now define a measure on paths by a Radon-Nikodym derivative with respect to the Brownian bridge measure from $x$ to $y$ reflected at the boundaries of $[0,1]$: $$\frac{d\mathbf P^{n,k,t}_{\xi,x,y} }{d\mathbf P_{Br[0,t]}^{x\to y} } (W) = \frac{\Gamma^{n,k}_t(W,\xi^{n,k})}{p_t^{neu}(x,y)^{-1} z^{n,k}_t(x,y)}.$$ This measure is well-defined because $\xi^{n,k}$ is a smooth function. Next, for all $\lambda>0$ and $(t,x,y) \in [0,\infty) \times [0,1]^2$ define the event $$A^{\lambda,n,k}_{t,x,y} := \big\{\xi: z_t^{n,k}(x,y)\geq \frac12 p_t^{A,B}(x,y) ,\;\;\;\; \big(\mathbf E_{\xi,x,y}^{n,k,t}\big)^{\otimes 2} [\mathscr Q_{n,k}(t,W^1,W^2)] \leq \lambda\big\},$$ where the expectation is with respect to two independent paths, each sampled from the path measure $\mathbf P^{n,k,t}_{\xi,x,y}$ defined above. We claim that there exists $\lambda>0$ such that \begin{equation}\label{lb}\inf_{n,k\ge 1} \inf_{x,y\in[0,1]} \inf_{t\in [\epsilon,T]} \Bbb P ( A^{\lambda,n,k}_{t,x,y})>0.
\end{equation}
To prove this, we write 
\begin{equation}\label{df1}\Bbb P( A^{\lambda,n,k}_{t,x,y}) \geq \Bbb P\big( z_t^{n,k}(x,y)\geq \frac12 p_t^{A,B}(x,y)\big) -\Bbb P ( B^{\lambda,n,k}_{t,x,y}),
\end{equation}
where $$B^{\lambda,n,k}_{t,x,y}:= \big\{\xi: z_t^{n,k}(x,y)\geq \frac12 p_t^{A,B}(x,y) ,\;\;\;\; \big(\mathbf E_{\xi,x,y}^{n,k,t}\big)^{\otimes 2} [\mathscr Q_{n,k}(t,W^1,W^2)] > \lambda\big\}. $$
The Paley-Zygmund inequality says that $\Bbb P(F>(1-\theta)\Bbb E[F]) \geq (1-\theta)^2 \Bbb E[F]^2/\Bbb E[F^2]$, so we have that 
\begin{equation}\label{df2}\Bbb P\big( z_t^{n,k}(x,y)\geq \frac12 p_t^{A,B}(x,y)\big) \geq \frac14 \frac{p_t^{A,B}(x,y)^2}{\Bbb E[z^{n,k}_t(x,y)^2]}\geq \frac14 \frac{p_t^{A,B}(x,y)^2}{\Bbb E[z_t(x,y)^2]},\end{equation}
where we use \eqref{pbound} and \eqref{pbound2} in the second inequality. Now to get an upper bound for $\Bbb P ( B^{\lambda,n,k}_{t,x,y}),$ notice that if $z_t^{n,k}(x,y)\geq \frac12 p_t^{A,B}(x,y)$, then $\big(\mathbf E_{\xi,x,y}^{n,k,t}\big)^{\otimes 2} [\mathscr Q_{n,k}(t,W^1,W^2)]$ is equal to
\begin{align*}&p_t^{neu}(x,y)^2 z_t^{n,k}(x,y)^{-2} \big(\mathbf{E}_{Br[0,t]}^{x\to y}\big)^{\otimes 2} \big[ \mathscr Q_{n,k}(t,W^1,W^2) \Gamma^{n,k}_t(W^1,\xi^{n,k})\Gamma^{n,k}_t(W^2,\xi^{n,k})] \\ &\leq 4p_t^{neu}(x,y)^2 p_t^{A,B}(x,y)^{-2} \big(\mathbf{E}_{Br[0,t]}^{x\to y}\big)^{\otimes 2} \big[ \mathscr Q_{n,k}(t,W^1,W^2) \Gamma^{n,k}_t(W^1,\xi^{n,k})\Gamma^{n,k}_t(W^2,\xi^{n,k})].
\end{align*}
Taking expectation $\Bbb E$ over the noise $\xi$ and applying Fubini gives 
\begin{align*}
\Bbb E&\bigg[\big(\mathbf{E}_{Br[0,t]}^{x\to y}\big)^{\otimes 2} \big[ \mathscr Q_{n,k}(t,W^1,W^2) \Gamma^{n,k}_t(W^1,\xi^{n,k})\Gamma^{n,k}_t(W^2,\xi^{n,k})]\bigg] \\ &\leq \big(\mathbf{E}_{Br[0,t]}^{x\to y}\big)^{\otimes 2} \big[ \mathscr Q_{n,k}(t,W^1,W^2) \Bbb E[\Gamma^{n,k}_t(W^1,\xi^{n,k})\Gamma^{n,k}_t(W^2,\xi^{n,k})]\big] \\ &=  \big(\mathbf{E}_{Br[0,t]}^{x\to y}\big)^{\otimes 2} \big[ \mathscr Q_{n,k}(t,W^1,W^2) e^{\mathscr Q_{n,k}(t,W^1,W^2) - A (L_0^{W^1}(t) + L_0^{W^2}(t))-B(L_1^{W^1}(t) + L_1^{W^2}(t))}\big],
\end{align*} where in the last equality we use the fact that $\sum_{i=1,2}\int_0^t \xi^{n,k}(t-s,W_s^i)ds$ is a Gaussian random variable conditional on $W^1,W^2,$ whose variance can be explicitly computed in terms of $\mathscr Q_{n,k}$. Now use the bound $a \leq e^a$ and we obtain that the last expression is bounded above by $$ \big(\mathbf{E}_{Br[0,t]}^{x\to y}\big)^{\otimes 2} \big[ e^{2\mathscr Q_{n,k}(t,W^1,W^2) - A (L_0^{W^1}(t) + L_0^{W^2}(t))-B(L_1^{W^1}(t) + L_1^{W^2}(t))}\big] =  p_t^{neu}(x,y)^{-2} \Bbb E[ z^{n,k}_{t,(\sqrt{2})}(x,y)^2],$$ where $z_{t,(\sqrt{2})}$ denotes the solution of the stochastic heat equation where the noise has an extra factor of $\sqrt{2}$ in the front, and $z^{n,k}_{t,(\sqrt{2})}$ is the respective approximation given in Step 1. In particular \eqref{pbound} and \eqref{pbound2} still hold and we have $\Bbb E[ z^{n,k}_{t,(\sqrt{2})}(x,y)^2] \leq \Bbb E[ z_{t,(\sqrt{2})}(x,y)^2]$ for all $n,k,x,y,t$. Summarizing these computations and applying Markov's inequality, we have 
\begin{align*}
\Bbb P(B^{\lambda,n,k}_{t,x,y}) &\leq \Bbb P\bigg(4p_t^{neu}(x,y)^2 p_t^{A,B}(x,y)^{-2}  \big(\mathbf{E}_{Br[0,t]}^{x\to y}\big)^{\otimes 2} \big[ \mathscr Q_{n,k}(t,W^1,W^2) \Gamma^{n,k}_t(W^1,\xi^{n,k})\Gamma^{n,k}_t(W^2,\xi^{n,k})]>\lambda \bigg) \\ &\leq 4p_t^{neu}(x,y)^2 p_t^{A,B}(x,y)^{-2} \lambda^{-1} \Bbb E \bigg[  \big(\mathbf{E}_{Br[0,t]}^{x\to y}\big)^{\otimes 2} \big[ \mathscr Q_{n,k}(t,W^1,W^2) \Gamma^{n,k}_t(W^1,\xi^{n,k})\Gamma^{n,k}_t(W^2,\xi^{n,k})] \bigg] \\ &\leq 4p_t^{A,B}(x,y)^{-2} \lambda^{-1} \Bbb E[z_{t,(\sqrt{2})}(x,y)^2 ],
\end{align*}
where we remark that all instances of $p_t^{neu}(x,y)$ have cancelled out. Note that $\lambda$ so far is independent of $n,k,x,y,t$, so now we exercise our freedom to set it equal to 
\begin{equation}\label{lambda}\lambda:= 32\bigg(\inf_{x,y\in[0,1]} \inf_{t\in [\epsilon,T]}  p_t^{A,B}(x,y)^{4} \Bbb E[z_{t,(\sqrt{2})}(x,y)^2 ]^{-1}\Bbb E[z_{t}(x,y)^2 ]^{-1}\bigg)^{-1}.
\end{equation}
The infimum is strictly positive because $\Bbb E[z_{t}(x,y)^2]\le Ct^{-1}$ by the first bound in Theorem \ref{70}. With this value of $\lambda$ we then see that $$ \Bbb P(B^{\lambda,n,k}_{t,x,y}) \leq \frac18 \frac{p_t^{A,B}(x,y)^2}{\Bbb E[z_t(x,y)^2]}$$ uniformly over all $n,k\ge 1$, $x,y\in [0,1]$, and $t\in [\epsilon,T].$ By combining this with \eqref{df1} and \eqref{df2}, this shows that $$\Bbb P(A^{\lambda,n,k}_{t,x,y}) \geq \frac18 \frac{p_t^{A,B}(x,y)^2}{\Bbb E[z_t(x,y)^2]} $$
uniformly over all $n,k\ge 1$, $x,y\in [0,1]$, and $t\in [\epsilon,T].$ This proves the claim \eqref{lb}, since by the first bound in Theorem \ref{70} we have $\Bbb E[z_t(x,y)^2] \leq C t^{-1}$ with $C$ independent of $t,x,y$. With this lower bound established on the probability of $A^{\lambda,n,k}_{t,x,y}$, we next claim that 
\begin{equation}\label{dist}z_t^{n,k}(x,y) \geq \frac12 p_t^{A,B}(x,y) e^{-\lambda^{1/2} dist_{L^2}(\xi, A^{\lambda,n,k}_{t,x,y})},
\end{equation}
for all $\lambda,t,x,y,n,k.$ In this expression we are viewing $\xi$ as lying in some abstract Wiener space and the quantity $dist_{L^2}(\xi,A)$ is defined as $\inf\{ \|\xi-\eta\|_{L^2([0,T]\times [0,1])} : \eta \in A\}, $ to be understood as $+\infty$ if $\xi-\eta\notin L^2([0,T]\times [0,1])$ for all $\eta$ in the abstract Wiener space. To prove this, write $z_t^{n,k}(x,y;\xi)$ to emphasize the dependence on the noise. Then note that for any $\eta \in A^{\lambda,n,k}_{t,x,y}, $ we have that 
\begin{align*}z_t^{n,k}(x,y;\xi) &= z_t^{n,k}(x,y;\eta) \mathbf E_{\eta,x,y}^{n,k,t} \bigg[ e^{\int_0^t \big[ \xi^{n,k}-\eta^{n,k} \big](t-s,W_s)ds} \bigg] \\ &\geq \frac12 p_t^{A,B}(x,y)e^{\mathbf E_{\eta,x,y}^{n,k,t} \big[\int_0^t \big[ \xi^{n,k}-\eta^{n,k} \big](t-s,W_s)ds\big]}.
\end{align*}
where we use Jensen's inequality and the definition of $A^{\lambda,n,k}_{t,x,y}$ in the second bound. Next we note by Cauchy-Schwarz that if $\eta \in A^{\lambda,n,k}_{t,x,y}, $ then
\begin{align*}
    \bigg|\mathbf E_{\eta,x,y}^{n,k,t} \big[&\int_0^t \big[ \xi^{n,k}-\eta^{n,k} \big](t-s,W_s)ds\big] \bigg|  = \bigg| \sum_{i=1}^n\sum_{j=1}^{2k} (\xi_{ij}-\eta_{ij}) \mathbf E_{\eta,x,y}^{n,k,t}\bigg[\int_0^t f_j(t-s) e_i(W_s)ds\bigg]\bigg| \\ &\leq \bigg[\sum_{i=1}^n\sum_{j=1}^{2k}(\xi_{ij}-\eta_{ij})^2\bigg]^{1/2}\bigg[\sum_{i=1}^n\sum_{j=1}^{2k}\bigg(\mathbf E_{\eta,x,y}^{n,k,t}\bigg[\int_0^t f_j(t-s) e_i(W_s)ds\bigg]\bigg)^2\bigg]^{1/2} \\ & = \|\xi^{n,k}-\eta^{n,k}\|_{L^2([0,T]\times[0,1])} \big(\mathbf E_{\eta,x,y}^{n,k,t}\big)^{\otimes 2} [\mathscr Q_{n,k}(t,W^1,W^2)]^{1/2}\\ &\leq \|\xi-\eta\|_{L^2([0,T]\times[0,1])} \cdot \lambda^{1/2}.
\end{align*}
This already gives the claim \eqref{dist} by combining with the previous expression. 

Now we are finally in a position to establish the claim \eqref{negmombo} stated at the beginning of this step. This will be done by combining \eqref{dist} and \eqref{lb} with the Gaussian isoperimetric inequality. The isoperimetric inequality states that if $A$ is a Borel measurable subset of any abstract Wiener space $(X,H,\mu)$ then for all $t>0$ one has that $\mu(\{\xi: dist_H(\xi,A) >t\}) \leq 1-\Phi\big(\Phi^{-1}(\mu(A))+t\big),$ where $\Phi$ is the cdf of a standard Gaussian \cite[Theorem 3.48]{Hai09}. Note that this bound is uniform over \textit{all} Borel sets $A$.

Take $\lambda = \lambda(\epsilon,T)$ as in \eqref{lambda}, and then based on this value of $\lambda$, choose some $a=a(\epsilon,T)\in \Bbb R$ so that the infimum in \eqref{lb} is precisely equal to $\Phi(a)$ where $\Phi$ is the cdf of a standard Gaussian. Finally, choose $\delta = \delta(\epsilon,T)>0$ so that $\delta < \inf_{x,y\in[0,1]} \inf_{t\in [\epsilon,T]} \frac12 p_t^{A,B}(x,y)$. Then for all $r>0$, we see by \eqref{dist} that 
\begin{align*}\sup_{n,k}\sup_{x,y\in[0,1]} \sup_{t\in[\epsilon,T]}\Bbb P(z_t^{n,k}(x,y) <r) &\leq  \sup_{n,k}\sup_{x,y\in[0,1]} \sup_{t\in[\epsilon,T]} \Bbb P\big(dist_{L^2}(\xi, A^{\lambda,n,k}_{t,x,y})> \lambda^{-1/2} \log(r/\delta)\big) \\ &\leq 1-\Phi\big(a+ \lambda^{-1/2} \log(r/\delta)\big),
\end{align*}
where we use \eqref{lb} and the isoperimetric inequality in the second bound. Now since $\Phi(x) \sim 1-e^{-x^2/2}$ for large $x$, this establishes a tail bound of order $e^{-c(\log r)^2}$. This is a stronger decay than any power and uniform over all $n,k$, and therefore by taking a limit (first letting $n\to\infty$ for fixed $k$, then letting $k\to \infty$ as described in Step 1) immediately gives \eqref{negmombo} as desired.
\\
\\
\textit{Step 3.} In this last step we need to justify how to turn the pointwise bound \eqref{negmombo} into a Holder bound as in the theorem statement. To do this, note that \begin{align}\label{rst}|z_t(x,a)^{-1}-z_t(x,b)^{-1}| &= z_t(x,a)^{-1}z_t(x,b)^{-1} |z_t(x,a)-z_t(x,b)|
\\|z_t(a,y)^{-1}-z_t(b,y)^{-1}|& = z_t(a,y)^{-1}z_t(b,y)^{-1} |z_t(a,y)-z_t(b,y)|\label{stu}
\\|z_t(x,y)^{-1}-z_s(x,y)^{-1}| &= z_t(x,y)^{-1}z_s(x,y)^{-1} |z_t(x,y)-z_s(x,y)|\label{tu}
\end{align}
for all $a,b,x,y,s,t.$ Consequently we have by Holder's inequality that 
\begin{align*}\Bbb E\big[|z_t(x,a)^{-1}-z_t(x,b)^{-1}|^p\big] &= \Bbb E\big[z_t(x,a)^{-p}z_t(x,b)^{-p} |z_t(x,a)-z_t(x,b)|^p\big] \\&\leq \Bbb E[ z_t(x,a)^{-3p}]^{1/3}\Bbb E[ z_t(x,b)^{-3p}]^{1/3} \Bbb E[ |z_t(x,a)-z_t(x,b)|^{3p}]^{1/3}. 
\end{align*}
Now for any $x,y,a,b\in [0,1]$ and any $t\in [\epsilon,T]$ we know by \eqref{negmombo} that $\Bbb E[ z_t(x,a)^{-3p}]^{1/3}\Bbb E[ z_t(x,b)^{-3p}]^{1/3}$ is bounded by some universal constant $C=C(\epsilon,T)$. by applying \eqref{bound2} in Theorem \ref{70}, the term $\Bbb E[ |z_t(x,a)-z_t(x,b)|^{3p}]^{1/3}$ can be bounded by $C|a-b|^{1/2-\delta}$ where $C$ is a universal constant. This gives us a bound on \eqref{rst} that will be conducive to applying Kolmogorov-Chentsov. To bound \eqref{stu}, note that for each fixed $t$ we have $(z_t(x,y))_{x,y\in[0,1]} \stackrel{d}{=} (z_t(y,x))_{x,y\in[0,1]}$ (by a time reversal of the noise) and so the same bound holds for \eqref{stu}. Finally, to bound \eqref{tu}, we can use a similar argument applying Holder's inequality in conjuction with \eqref{negmombo} and \eqref{bound2} to get a bound of the form $C|t-s|^{1/4-\delta}.$ Consequently by Kolmogorov-Chentsov (multiparameter version), we conclude the desired Holder continuity of $\mathbf {z}^{-1}$ appearing in the proposition statement. 

The desired bound on Holder continuity of $\mathbf{z}$ itself is similar, but one does not even need to consider the negative moments: the argument follows immediately from \eqref{bound2} together with $(z_t(x,y))_{x,y\in[0,1]} \stackrel{d}{=} (z_t(y,x))_{x,y\in[0,1]}$.
\end{proof}

For a compact metric space $X$ we define $C(X)$ to be the Banach space of all continuous functions $X\to \Bbb R.$

\begin{cor}\label{existence}
Let $\mathcal X$ be the state space defined in \eqref{xspace}. Define the Markov operator $\mathfrak P: C(\mathcal X) \to C(\mathcal X)$ as follows: for $F\in C(\mathcal X)$ and $\phi\in \mathcal X$ let\begin{equation}\label{markop}\mathfrak PF(\phi):= \Bbb E_\phi [ F([z(1,\cdot)]_\mathcal X)]\end{equation} where $z$ solves \eqref{mshe} and $[\mu]_\mathcal X$ denotes the equivalence class of the nonzero measure $\mu$, and the expectation on the right side is taken with respect to the process $z$ started at any representative of the equivalence class $\phi$. Then $\mathfrak P$ is globally well-defined on the quotient space $\mathcal X$, and has at least one invariant measure. Furthermore any invariant measure is supported on the space $\mathcal Y$ given by \eqref{yspace}.
\end{cor}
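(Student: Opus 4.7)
The plan is to treat four claims in sequence: well-definedness on the quotient, the Feller property, existence of an invariant measure, and support in $\mathcal Y$. The latter three claims are all near-immediate consequences of the machinery already developed in Theorem \ref{70} and Proposition \ref{nega}.

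For well-definedness, linearity of \eqref{mshe} implies that $z_{c\mu} = c z_\mu$ for every $c>0$, so $[z_\mu(1,\cdot)]_\mathcal X$ depends only on $[\mu]_\mathcal X$. Moreover, via the propagator representation $z_\mu(1,y) = \int_{[0,1]} z_1(x,y)\,\mu(dx)$, Proposition \ref{nega} shows that $z_\mu(1,\cdot)$ is strictly positive almost surely, so the equivalence class is a genuine element of $\mathcal X$. For the Feller property, given $\phi_n\to\phi$ in $\mathcal X$, I would choose probability-measure representatives $\mu_n,\mu$; by the definition of $d_\mathcal X$, convergence in $\mathcal X$ corresponds precisely to weak convergence $\mu_n\to\mu$ of this normalized family. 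The convergence statement \eqref{conv} of Theorem \ref{70} then gives $z_{\mu_n}(1,\cdot)\to z_\mu(1,\cdot)$ in probability in $C^{1/2-\kappa}[0,1]$; since the quotient map from strictly positive Hölder functions into $\mathcal X$ is continuous, this transfers to convergence of $[z_{\mu_n}(1,\cdot)]_\mathcal X$ in $\mathcal X$, and continuity of $F\in C(\mathcal X)$ together with bounded convergence yields $\mathfrak P F(\phi_n)\to\mathfrak P F(\phi)$.

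Existence of an invariant measure is then immediate from Krylov--Bogoliubov on the compact space $\mathcal X$: the Cesaro averages $\frac{1}{N}\sum_{n=1}^N(\mathfrak P^*)^n\delta_{\phi_0}$ are automatically tight, and any weak subsequential limit is invariant by the Feller property just established. For the support statement I would prove the stronger claim that $\mathfrak P^*\delta_\phi$ is concentrated on $\mathcal Y$ for every $\phi\in\mathcal X$: via the propagator representation $z(1,y) = \int_{[0,1]} z_1(x,y)\,\mu(dx)$ and Proposition \ref{nega}, the random function $z(1,\cdot)$ inherits joint $C^{1/2-\kappa}$ regularity and strict positivity from the kernel $z_1$, so $[z(1,\cdot)]_\mathcal X\in\mathcal Y$ a.s. The identity $\rho = \int \mathfrak P^*\delta_\phi\, d\rho(\phi)$ valid for any invariant $\rho$ then forces $\rho$ to be supported on $\mathcal Y$.

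The only step that will require genuine care is the Feller property: one must verify that the particular quotient topology on $\mathcal X$ interacts cleanly with the convergence in \eqref{conv}, which is stated at the level of weak convergence of Borel measures. Normalizing representatives to be probability measures resolves this, converting convergence in the quotient metric $d_\mathcal X$ into weak convergence of a fixed family of probability measures to which \eqref{conv} directly applies. Once this matching is set up, every remaining step is a formal consequence of the already-established moment bounds and propagator estimates.
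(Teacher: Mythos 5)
Your proposal is correct and follows essentially the same route as the paper: scaling invariance plus Proposition \ref{nega} for well-definedness, the convergence statement \eqref{conv} of Theorem \ref{70} for the (Feller) continuity needed to run Krylov--Bogoliubov on the compact space $\mathcal X$, and strict positivity together with the Hölder estimates to place the time-one law, hence any invariant measure, on $\mathcal Y$. The only difference is cosmetic: the paper cites \eqref{bound2} directly for the regularity of $z^\mu(1,\cdot)$ where you route it through the kernel bounds of Proposition \ref{nega}, which amounts to the same ingredients.
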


\begin{proof}
Proposition \ref{nega} implies that the solution $z$ of \eqref{mshe} does not collapse to zero in finite time. Thus $z(1,\cdot)$ is indeed a nonzero and non-negative function on $[0,1]$, consequently it does indeed yield a nonzero random measure on $[0,1]$ by identifying it with the measure $A\mapsto \int_A z(1,x)dx$. The fact that $\mathfrak PF$ is well-defined on equivalence classes of $\mathcal X$ is immediate from the fact that $cz$ solves \eqref{mshe} for all $c>0$, whenever $z$ solves \eqref{mshe}, e.g. by uniqueness of mild solutions stated in Theorem \ref{70}.

Note that $\mathcal X$ is compact, and therefore any collection of probability measures on $\mathcal X$ is tight. Thus existence follows immediately from the Krylov-Bogoliubov criterion, since we have already shown global well-posedness of the Markov operator of $[z(t,\cdot)]$ on $\mathcal X$. 

The fact that any invariant measure is supported on $\mathcal Y$ follows from the positivity of the solution (see Proposition \ref{nega}) together with the estimate \eqref{bound2} in Theorem \ref{70}. More precisely, positivity and \eqref{bound2} imply that if we start from any deterministic initial data $[\mu]\in \mathcal X$, then the time-one solution $[z^\mu(1,\cdot)]$ will be supported on $\mathcal Y$ almost surely. In particular, if we start the process from an invariant measure, then the time-one solution will be in $\mathcal Y$ almost surely.
\end{proof}

\section{Support theorem and proof of Theorem \ref{mr}}

In this section we will prove Theorem \ref{mr}, which will be reformulated as Corollary \ref{mr2} below. In this section the strong Feller property will be used, so it is crucially important that the driving noise is space-time white noise (not colored in space). First let us prove full support of the invariant measure that was guaranteed to exist in Corollary \ref{existence}. 

\begin{defn}[Continuum directed random polymer measure]
    Fix $t>0$. For each realization of the space-time white noise $\xi$ on $[0,1]\times [0,t]$, and each finite nonnegative measure $\mu$ on $[0,t]$ let $\mathbf P^{\xi,\mu}_{X^{t,x}}$ be the unique path measure on $C[0,t]$ such that for all $0\leq t_1<...<t_k\le t$ and $x_1,...,x_k\in [0,1]$ one has $$\mathbf P^{\xi,\mu}_{X^{t,x}} \big( X_{t_1}\in dx_1,..., X_{t_k}\in dx_k\big) = \frac{\int_{[0,1]} z_{0,t-t_k}(x_{k+1},x_k)\cdots z_{t-t_2,t-t_{1}}(x_2,x_1) z_{t-t_1,t}(x_1,x) \mu(dx_{k+1})}{\int_{[0,1]} z_{0,t}(x_{k+1},x) \mu(dx_{k+1})}dx_1\cdots dx_k.$$
    Here $(X_s)_{s\in [0,t]}$ denotes the canonical process on $C[0,t]$, and $z_{s,t}(x,\bullet)$ for $t>s$ denotes the solution at time $t$ of \eqref{mshe} started from $\delta_x$ at time $s$, all coupled to the same realization of $\xi$. We call this measure the \textit{continuum directed random polymer measure} on $C[0,t]$, starting from $x$ at $t=0$ and finishing with with endpoint distribution $z_{0,t}\mu/\int z_{0,t}d\mu$.
\end{defn}

In other words, $\mathbf P^{\xi,\mu}_{X^{t,x}}$ is the measure induced by leveraging the convolution property of the propagators $z_{s,t}(x,y)$ to specify the finite dimensional distributions 
for each individual realization of $\xi$, as done in \cite[Definition 4.1]{AKQ11}. We have introduced an extra time reversal because it will be useful for our purposes. The fact that such a measure is indeed supported on $C[0,t]$ is non-obvious and was shown in Section 5 of \cite{AKQ11}. On a purely formal level one has the formal Radon-Nikodym derivative with respect to the law $\mathbf P_{W^{t,x}}$ of a Brownian motion of diffusion rate 2 on $[0,t]$ started at $x$ and reflected at the boundaries of $[0,1]$: 
\begin{equation}\label{density}\frac{d\mathbf P^{\xi,\mu}_{X^{t,x}}}{d\mathbf P_{W^{t,x}}} ( W) \propto \mu(W(t))e^{-A L_0^W(t) - B L_1^W(t) +\int_0^t \xi(t-s,W(s))ds},
\end{equation}
where $L_a^W$ are the local times at the boundaries and $A,B$ are the Robin boundary parameters.

\begin{lem}\label{12}
Fix a deterministic continuous function $h:[0,T]\times [0,1]\to \Bbb R$, and consider the mild solution of the SPDE given by \begin{equation}\partial_t z = \partial_x^2 z + zh+ z\xi,\;\;\;\;
\partial_x z(t,0) =Az(t,0), \;\;\;\; \partial_x z(t,1) = B z(t,1),\;\;\; z(0,dx) = \mu(dx)\label{c}
\end{equation}
Then for a.e $\xi \in E$ the solution $z^h$ can be decomposed as $z^h(t,x) = z^0(t,x)v^h(t,x)$, where $z^0$ is just the mild solution of \eqref{c} with $h=0$ (i.e., the solution of \eqref{mshe}), and 
\begin{equation}\label{poly}v^h(t,x) = \mathbf E^{\xi,\mu}_{X^{t,x}} \bigg[ \exp\bigg(\int_0^t h(t-s,X(s))ds\bigg)\bigg],
\end{equation}
where the expectation is taken with respect to the \textit{continuum directed random polymer measure} $\mathbf P^{\xi,\mu}_{X^{t,x}}$ on $C[0,t]$.
\end{lem}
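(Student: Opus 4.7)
The plan is to reduce to the case of spatially mollified noise via Proposition~\ref{mollif}, in which regime the equation \eqref{c} is a classical parabolic PDE and the desired identity reduces to the elementary Feynman-Kac formula, then to pass to the limit. Specifically, let $\xi^{\varphi}$ denote the spatial mollification of $\xi$ as in Proposition~\ref{mollif}, let $z^{0,\varphi}$ solve \eqref{mshe} driven by $\xi^{\varphi}$, and let $z^{h,\varphi}$ solve \eqref{c} driven by $\xi^{\varphi}$. Because $\xi^{\varphi}$ is smooth in space and $h$ is continuous, the chaos-expansion/sketch argument given after Proposition~\ref{mollif} carries over verbatim — one simply carries the extra potential $h$ inside the exponential — yielding the representation
\[
z^{h,\varphi}(t,x) = \int_{[0,1]} p_t^{neu}(y,x) \, \mathbf{E}^{y\to x}_{Br[0,t]}\!\left[ e^{\int_0^t \xi^{\varphi}(t-s,W_s)\,ds + \int_0^t h(t-s,W_s)\,ds - \tfrac12 (\varphi * \varphi)(0)\, t - A L_0^W(t) - B L_1^W(t)} \right] \mu(dy),
\]
and the same formula without the $\int h$ term for $z^{0,\varphi}$.

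Now I would carry out the identification step in the mollified setting. Let $\mathbf{P}^{\xi^{\varphi},\mu}_{X^{t,x}}$ denote the polymer measure defined exactly as in the statement of the lemma but using the propagators $z^\varphi_{s,t}$ of the equation driven by $\xi^\varphi$. Because $\xi^{\varphi}$ is smooth, this polymer measure has the \emph{honest} Radon-Nikodym derivative against reflected Brownian motion given by \eqref{density} with $\xi$ replaced by $\xi^{\varphi}$ (shifted by the Itô correction $-\tfrac12 (\varphi*\varphi)(0) t$), and one can check this by writing the finite-dimensional formula in the lemma statement and plugging in the explicit Feynman-Kac formula for each propagator $z^{\varphi}_{s,t}(x,y)$ — the factors telescope and recover the Wiener-path density. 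Multiplying the numerator by $e^{\int_0^t h(t-s,W_s)\,ds}$ and pulling the normalization back into the denominator then yields the key algebraic identity
\[
z^{h,\varphi}(t,x) = z^{0,\varphi}(t,x) \cdot \mathbf{E}^{\xi^{\varphi},\mu}_{X^{t,x}}\!\left[ e^{\int_0^t h(t-s,X_s)\,ds} \right]
\]
for each fixed realization of $\xi^{\varphi}$.

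Finally I would pass to the limit $\varphi \to \delta$. Proposition~\ref{mollif} gives $z^{h,\varphi} \to z^{h}$ and $z^{0,\varphi} \to z^{0}$ in probability in $C^{\kappa}([\epsilon,T]\times[0,1])$, and Proposition~\ref{nega} ensures that $z^{0}$ is strictly positive, so passing to ratios is legitimate. For the right-hand side, joint convergence of the propagators $z^{\varphi}_{s,t}(x,y) \to z_{s,t}(x,y)$ in the endpoint and time variables (obtained by applying Proposition~\ref{mollif} with $\mu = \delta_x$, together with the symmetry argument in the proof of Theorem~\ref{70}) gives convergence of every finite-dimensional distribution of the mollified polymer measure to the corresponding finite-dimensional distribution of $\mathbf{P}^{\xi,\mu}_{X^{t,x}}$, and the continuity of the functional $W \mapsto \exp(\int_0^t h(t-s,W_s)\,ds)$ then yields convergence of the polymer expectations. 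The main obstacle I foresee is precisely this last step: promoting the finite-dimensional convergence to convergence of the integrals against the polymer measures is not automatic in general, but since $\exp(\int_0^t h(t-s,W_s)ds)$ is bounded continuous and only depends on the path through a Riemann integral, one can approximate it by cylinder functions in $\sup$-norm and use dominated convergence, bypassing the need for tightness on $C[0,t]$ (which was established in \cite{AKQ11} on the full line and could also be adapted here via the reflected-bridge representation \eqref{fk1}). Uniqueness of mild solutions from Theorem~\ref{70} then identifies the limit as the claimed identity almost surely.
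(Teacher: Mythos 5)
Your overall route is the same as the paper's: establish the decomposition first for a spatially mollified noise, where Proposition \ref{mollif} makes the Feynman--Kac density \eqref{density} rigorous and the identity $z^{h,\varphi}=z^{0,\varphi}\,\mathbf E^{\xi^\varphi,\mu}_{X^{t,x}}[e^{\int_0^t h(t-s,X_s)ds}]$ is elementary, then pass to the limit using convergence of $z^{0,\varphi}$, strict positivity, and convergence of the quenched polymer measures. The genuine gap is in your final step. The claim that $G(W)=\exp(\int_0^t h(t-s,W_s)\,ds)$ ``can be approximated by cylinder functions in sup-norm'' is false: the Riemann-sum error is controlled by the modulus of continuity of the path, not by $\|h\|_\infty$, and two continuous $[0,1]$-valued paths can agree at any prescribed finite set of times while their occupation integrals $\int_0^t h(t-s,W_s)ds$ differ by an amount of order $t\cdot\mathrm{osc}(h)$; hence no cylinder function is uniformly close to $G$, and finite-dimensional convergence of the polymer measures does not, by the mechanism you describe, yield convergence of the polymer expectations. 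This is exactly the point where the paper does real work: it proves almost-sure tightness of the measures $\mathbf P^{\xi,\mu}_{X^{t,x}_\gamma}$ on $C[0,t]$ via Fatou's lemma combined with the identity $\Bbb E\big[z^{0,\gamma}(t,x)\,\mathbf E^{\xi,\mu}_{X^{t,x}_\gamma}[\|X\|_{C^\kappa}]\big]=\mathbf E_{BM}[\|X\|_{C^\kappa}]$ (the Robin analogue of \cite[Lemma 4.2]{AKQ11}) and the strict positivity of $z^0(t,x)$; your parenthetical ``tightness could also be adapted here'' is in fact the necessary argument, not an optional fallback, and it is not merely a citation of the full-line result.

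Two further remarks. First, your desired bypass of tightness is salvageable, but by a different argument than the one you give: since $I(W):=\int_0^t h(t-s,W_s)ds$ is bounded by $t\|h\|_\infty$, one may expand $e^I=\sum_k I^k/k!$ and write $\mathbf E_\gamma[I^k]$ as a $k$-fold time integral of $\mathbf E_\gamma[\prod_i h(t-s_i,W_{s_i})]$, which converges for each fixed $(s_1,\dots,s_k)$ by finite-dimensional convergence and is uniformly bounded, so bounded convergence in the time variables and in the series gives $\mathbf E_\gamma[e^I]\to\mathbf E[e^I]$ without tightness; if you want to avoid tightness you should argue this way rather than via uniform cylinder approximation. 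Second, Proposition \ref{mollif} as stated concerns $h=0$, so the convergence $z^{h,\varphi}\to z^h$ you invoke needs a word of justification (either rerun the chaos-expansion convergence with the bounded potential $h$, or use the Cameron--Martin tilt as the paper does when asserting existence of $z^h$); also note that the in-probability convergences only give the stated statements along a subsequence for a.e.\ fixed $\xi$, which is how the paper phrases its conclusion.
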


Formally at the level of Feynman-Kac, the decomposition claimed in Lemma \ref{12} is obvious by \eqref{density} and some simple algebraic manipulation, but because of the roughness of $\xi$ one needs some work to give a rigorous proof.

\begin{proof}
First of all, we note that the mild solution $z^h$ can be shown to exist using the Cameron-Martin theorem together with Theorem \ref{70} (the $h=0$ case). To rigorously establish the decomposition \eqref{poly} one first proves that it is true when $\xi$ is replaced by a noise $\xi^\gamma$ which is smooth in space and white in time, in which case we have an associated decomposition $z=z^{0,\gamma} \cdot v^h_\gamma$ for the solution of \eqref{c} where $z^{0,\gamma}$ solves \eqref{c} with $h=0$ and $\xi$ replaced by $\xi^\gamma$, and $v^h_\gamma$ is the same as in \eqref{poly} except the path measure $\mathbf P^{\xi,\mu}_{X^{t,x}}$ is replaced by the appropriate measure $\mathbf P^{\xi,\mu}_{X^{t,x}_\gamma}$ associated to $z^{0,\gamma}$.

In this case the claimed decomposition is straightforward to show from applying Feynman-Kac, because by Proposition \ref{mollif} the density formula \eqref{density} is actually rigorous modulo two caveats. Firstly $\mu(W(t))$ should be interpreted in terms of a disintegration of Brownian bridge measures, so that the endpoint distribution of the Brownian path is proportional to $p_t^{neu}(x,y)\mu(dx)$ as in Proposition \ref{mollif}. Secondly $\xi(t-s,W(s))$ needs to be replaced by $\xi^\gamma(t-s,W(s))-C_\gamma$ for a large constant $C_\gamma$ as described in Proposition \ref{mollif}. 
Then as $\gamma\to 0$ the solutions $z^{0,\gamma}(t,x)$ converge in law (in probability in the topology of $C([\epsilon,T]\times [0,1])$ for all $T>\epsilon>0$) to $z^0(t,x)$ as stated in Proposition \ref{mollif}. 

Since $h$ is bounded and continuous, the path functionals $G(X):=\exp\big(\int_0^t h(t-s,X(s))ds\big)$ are bounded and continuous on $C[0,1]$ so it just remains to show that for a.e. fixed realization of $\xi$, the path measures $\mathbf P^{\xi,\mu}_{X^{t,x}_\gamma}$ also converge in law as $\gamma\to 0$, as measures on $C[0,1]$ to the path measure $\mathbf P^{\xi,\mu}_{X^{t,x}}$. The finite-dimensional marginals certainly converge, simply by convergence of the $z^{0,\gamma}$ to $z^0$. Thus one only needs to establish a.s. tightness of these path measures on $C[0,1]$. For this one can use Fatou's lemma to note that for $\kappa<1/2$ one has $$\Bbb E\big[z^0(t,x)\liminf_{\gamma\to 0} \mathbf E^{\xi,\mu}_{X^{t,x}_\gamma}[ \|X\|_{C^{\kappa}[0,t]}]\big] \leq \liminf_{\gamma\to 0} \Bbb E\big[ z^{0,\gamma}(t,x) \mathbf E^{\xi,\mu}_{X^{t,x}_\gamma}[ \|X\|_{C^{\kappa}[0,t]}]\big] = \mathbf E_{BM} [ \|X\|_{C^{\kappa}[0,t]}]]<\infty.$$ Here the outer expectation $\Bbb E$ is over the noise $\xi$, and $\mathbf E_{BM}$ is the expectation with respect to a standard Brownian motion of rate 2. By strict positivity of $z_0(t,x)$, the above bound establishes $\xi$-a.s. finiteness of $\liminf_{\gamma\to 0} \mathbf E^{\xi,\mu}_{X^{t,x}_\gamma}[ \|X\|_{C^{\kappa}[0,t]}].$ In the last equality we are using that for each $\gamma>0$ one has $$\Bbb E\big[ z^{0,\gamma}(t,x) \mathbf E^{\xi,\mu}_{X^{t,x}_\gamma}[ \|X\|_{C^{\kappa}[0,t]}]\big] = \mathbf E_{BM} [ \|X\|_{C^{\kappa}[0,t]}],$$ which holds by \cite[Lemma 4.2]{AKQ11}. That result is for $\gamma=0$ and full space, but the result for the Robin case and $\gamma>0$ is completely analogous. This proves the desired a.s. tightness (at least along some random subsequence), thus establishing the validity of the a.s. decomposition claimed above, with $v^h$ given by \eqref{poly}.
\end{proof}

With the above preliminaries established, we can now prove our support theorem. 

\begin{thm}[Support theorem]\label{supp}
Fix any deterministic initial condition $[\mu]\in \mathcal X$. Let $z(t,x)$ denote the Itô solution of \eqref{mshe} started from $\mu$. Then the $\mathcal X$-valued random variable $[z(1,\cdot)]$ has full support in $\mathcal X$. In particular, any invariant measure for the $\mathcal X$-valued Markov process $[z(t,\cdot)]$ necessarily has full support in $\mathcal X.$
\end{thm}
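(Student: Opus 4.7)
The strategy is to combine the Cameron--Martin theorem with the multiplicative decomposition from Lemma \ref{12} to prove that the topological support $S\subset\mathcal{X}$ of the law of $[z(1,\cdot)]$ is invariant under the homeomorphism $T_\phi:[\mu]\mapsto[\mu e^\phi]$ for every smooth $\phi:[0,1]\to\mathbb{R}$. Once this invariance is established, picking any $x_0\in S\cap\mathcal{Y}$ (nonempty by Proposition \ref{nega} together with Theorem \ref{70}) and considering the orbit $\{T_\phi(x_0)\}_\phi$ produces a dense subset of $\mathcal{X}$ contained in the closed set $S$, forcing $S=\mathcal{X}$.

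For the setup, fix any $h\in L^2([0,1]^2)$. The Cameron--Martin theorem implies that the laws of $\xi$ and $\xi+h$ are mutually absolutely continuous; since absolute continuity is preserved under pushforward by measurable maps, the laws of $[z^0(1,\cdot;\xi)]$ and $[z^0(1,\cdot;\xi+h)]$ on $\mathcal{X}$ are mutually absolutely continuous. Unwinding the mild formulation shows that $z^0(1,\cdot;\xi+h)$ is exactly the solution $z^h(1,\cdot;\xi)$ of \eqref{c} driven by the original $\xi$. Consequently $\mathrm{Law}([z^h(1,\cdot)])$ has the same topological support $S$ for every $h\in L^2$. By Lemma \ref{12} we have $z^h(1,x)=z^0(1,x)\,v^h(1,x)$ with $v^h$ the polymer expectation \eqref{poly}, so varying $h$ amounts to varying the multiplicative factor $v^h$.

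Now given a smooth $\phi$, set $h_\eta(t,x):=\eta^{-1}\phi(x)\mathbf{1}_{[0,\eta]}(t)$ for small $\eta>0$. The polymer integrand becomes $\eta^{-1}\int_{1-\eta}^{1}\phi(X(s))\,ds$, which is uniformly bounded by $\|\phi\|_\infty$, and by continuity of polymer paths together with $X(1)=x$ it converges to $\phi(x)$ $\mathbf{P}^{\xi,\mu}_{X^{1,x}}$-almost surely. Combining dominated convergence with the polymer path regularity established in the proof of Lemma \ref{12}, one concludes that $v^{h_\eta}(1,x)\to e^{\phi(x)}$ uniformly in $x$, in probability over $\xi$. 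Since $T_\phi$ is a homeomorphism of $\mathcal{X}$ (continuity is immediate from Portmanteau, using that $e^\phi$ is bounded and continuous), this upgrades to $[z^{h_\eta}(1,\cdot)]\to T_\phi([z^0(1,\cdot)])$ in probability in $\mathcal{X}$, and hence to weak convergence of the corresponding laws on the compact space $\mathcal{X}$. Because each $\mathrm{Law}([z^{h_\eta}(1,\cdot)])$ has topological support $S$, the support of the weak limit, which equals $T_\phi(S)$ by the homeomorphism property, must satisfy $T_\phi(S)\subseteq S$. Applying the same argument with $-\phi$ in place of $\phi$ gives the reverse inclusion, so $T_\phi(S)=S$ for every smooth $\phi$.

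To finish, since Proposition \ref{nega} and Theorem \ref{70} imply $\mathbb{P}([z(1,\cdot)]\in\mathcal{Y})=1$, the set $S\cap\mathcal{Y}$ is nonempty. Choose $x_0=[f_0]\in S\cap\mathcal{Y}$ with $f_0$ strictly positive and continuous. Then $\{T_\phi(x_0):\phi\text{ smooth}\}=\{[f_0 e^\phi]\}$ is contained in $S$ and is dense in $\mathcal{Y}$, because smooth functions modulo constants are uniformly dense in $C[0,1]$ modulo constants; since $\mathcal{Y}$ is dense in $\mathcal{X}$ and $S$ is closed, $S=\mathcal{X}$. The last assertion, full support of any invariant measure, follows from $\rho(U)=\int\mathbb{P}_\phi([z(1,\cdot)]\in U)\,d\rho(\phi)>0$ for every nonempty open $U$. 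The main technical obstacle is making the convergence $v^{h_\eta}\to e^\phi$ quantitative enough in $\xi$ to transfer to a statement about topological supports; since $\|h_\eta\|_{L^2}$ blows up as $\eta\to 0$, one cannot use naive Girsanov estimates on the shift itself but must genuinely exploit the endpoint concentration of the polymer path measure to control the approximation uniformly in $x$ and in probability over $\xi$.
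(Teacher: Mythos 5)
Your overall strategy is the paper's: a Cameron--Martin shift of the noise, Lemma \ref{12} to write the shifted solution as $z^0\,v^h$, and an approximation of the multiplicative tilt $e^{\phi}$ by polymer functionals; your packaging of the support step (all laws of $[z^h(1,\cdot)]$, $h\in L^2$, share the same support $S$ by mutual absolute continuity, and $S$ absorbs weak limits by Portmanteau, giving $T_\phi(S)\subseteq S$ and then equality via $-\phi$) is a reasonable substitute for the paper's countable-shift lemma on abstract Wiener space. However, there is a genuine error in the key computation: your shift $h_\eta(t,x)=\eta^{-1}\phi(x)1_{[0,\eta]}(t)$ is localized at the wrong end of the time interval. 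In the representation \eqref{poly} the exponent is $\int_0^1 h(1-s,X(s))\,ds$, and by the definition of the polymer measure and \eqref{density} the path is pinned at $x$ at polymer time $s=0$ (which, because of the time reversal, corresponds to SPDE time $t=1$), while $X(1)$ is the \emph{random} endpoint, distributed proportionally to $z_{0,1}(y,x)\,\mu(dy)$. With your $h_\eta$ the exponent equals $\eta^{-1}\int_{1-\eta}^{1}\phi(X(s))\,ds\to\phi(X(1))$, not $\phi(x)$; the assertion $X(1)=x$ contradicts the definition. Equivalently, at the SPDE level a drift concentrated near time $0$ tilts the \emph{initial} measure, so in fact
$$v^{h_\eta}(1,x)\;\longrightarrow\;\frac{\int_{[0,1]} z_{0,1}(y,x)e^{\phi(y)}\mu(dy)}{\int_{[0,1]} z_{0,1}(y,x)\mu(dy)},\qquad [z^{h_\eta}(1,\cdot)]\;\longrightarrow\;[z^{e^{\phi}\mu}(1,\cdot)],$$
the time-one solution started from $e^{\phi}\mu$, rather than $T_\phi([z^0(1,\cdot)])$.

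This is not a cosmetic slip: as written, the argument only shows that $S$ contains the supports of the laws of $[z^{e^{\phi}\mu}(1,\cdot)]$, and these points all lie in the image of the time-one solution map, which is almost surely a compact subset of $\mathcal Y$ (cf.\ Remark \ref{solflow}); no density in $\mathcal X$ can follow, and the invariance $T_\phi(S)=S$ is not established. The repair is exactly the paper's choice of shift: take $h_\eta(t,x)=\eta^{-1}\phi(x)1_{[1-\eta,1]}(t)$, supported near the \emph{terminal} SPDE time, so that after the reversal $h(1-s,X(s))$ the mass sits at $s\in[0,\eta]$, where $X(0)=x$ deterministically; bounded convergence then gives $v^{h_\eta}(1,x)\to e^{\phi(x)}$ pointwise with the uniform bound $e^{\|\phi\|_\infty}$, and (as in the paper) compactness of $\mathcal X$ upgrades this to convergence in probability in $\mathcal X$ --- your claim of convergence uniform in $x$ is stronger than what dominated convergence yields and is not needed. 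With that correction, the remainder of your argument (choice of $x_0\in S\cap\mathcal Y$, density of the orbit $\{[f_0e^{\phi}]\}$, and the deduction for invariant measures) goes through.
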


\begin{proof}We break the proof into three steps. In this proof, we will set the terminal time $T=1$ and we will consider $\xi$ as living in some Banach space $E$ which can be explicitly taken as the closure of smooth functions with respect to the norm of some parabolic Hölder space $\mathcal C^{-3/2-\kappa}_{\mathfrak s}. $ The exact choice of space is not important here but what is important is that there is a continuously embedded Hilbert space $H=L^2([0,1]\times [0,T])$ (the Cameron-Martin space) in $E$ with the property that if $T_h(\xi) = \xi+h$ then the pushforward $T_h^*\mu$ of the law $\mu$ of $\xi$ on $E$ is absolutely continuous (in fact equivalent) with respect to $\mu$ and one has $\frac{d(T_h^*\mu)}{d\mu}(\xi) = e^{\langle \xi,h\rangle -\frac12\|h\|_H^2},$ where $\langle \cdot , h\rangle$ is the stochastic integral against $\xi$.
\\
\\
\textit{Step 1.} We claim that the law of any $z$ solving \eqref{mshe} under $T_h^*\mu$ equals that of the solution of \begin{equation}\partial_t z = \partial_x^2 z + zh+ z\xi,\;\;\;\;
\partial_x z(t,0) =Az(t,0), \;\;\;\; \partial_x z(t,1) = B z(t,1),\;\;\; z(0,dx) = \mu(dx)\label{c'}
\end{equation}
under the original measure with the same initial condition. Here $zh$ is an un-renormalized or ``classical'' product.

To prove this, note by the Cameron-Martin theorem that under $T_h^*\mu$ the random field $\eta:=\xi-h$ is a standard space-time white noise. Write the equation in mild form as in \eqref{mild}, and note that the integral against $z(s,y) \xi(ds,dy)$ can be written as an integral against $z(s,y)\big[ \eta(ds,dy) +h(s,y)dsdy\big]$, and we immediately obtain the claim.
\\
\\
\textit{Step 2. } Assume for this step that $(E,H,\gamma)$ is any abstract Wiener space, and $\mathcal X$ is another separable metric space. Let $\xi$ be sampled from $\gamma$, let $F:E\to \mathcal X$ be any measurable function, and let $D\subset H$ be any countable subset. Then we claim that the random set $F(\xi+D):=\{F(\xi+h):h\in D\}$ is almost surely contained in the support of $F(\xi)$. 

To prove this, let $U \subset \mathcal X$ be the complement of the support of $F(\xi)$. Then it is clear from the Cameron-Martin theorem that for every $h\in H$ one has $$\gamma(\{\xi: F(\xi+h)\in U\}) = 0.$$ Thus by countability of $D$ we have that $$\gamma(\{\xi: F(\xi+h)\in U\text{ for some } h\in D\}) \leq \sum_{h\in D} \gamma(\{\xi: F(\xi+h)\in U\}) = 0,$$which proves the claim.

This step will be used in the following form below: if there exists a deterministic countable subset $D\subset H$ such that the random set $F(\xi+D):=\{F(\xi+h):h\in D\}$ is almost surely dense in $\mathcal X$, then $F(\xi)$ necessarily has full support in $\mathcal X$.
\\
\\
\textit{Step 3.} We now let $F:E\to \mathcal X$ be the functional $F(\xi):= [z(1,\cdot)],$ where now $\mathcal X$ is the space given by \eqref{xspace} and $z$ solves \eqref{mshe} driven by $\xi$ starting from some arbitrary initial state $[\mu]\in\mathcal X$ (here the ``$\cdot$'' is in the spatial variable). In this case let us fix some smooth function $\varphi_0: [0,1]\to \Bbb R$, and then define $h_n^{\varphi_0}\in H=L^2([0,1]\times [0,T])$ by setting $$h_n^{\varphi_0}(t,x):=n 1_{[1-n^{-1},1]}(t)\varphi_0(x).$$ In this case by \eqref{c'} and Lemma \ref{12}, the random variable $F(\xi+h_n^{\varphi_0})$ is given by $[z_0(1,x)v_n(1,x)]$ where $z_0$ solves \eqref{mshe} and 
$$v_n(1,x) = \mathbf E^{\xi,\mu}_{X^{1,x}} \bigg[ \exp\bigg(n\int_0^{1/n} \varphi_0(X(s))ds\bigg)\bigg].$$ Now, since $X^{1,x}(0)=x$ and $X^{1,x}$ is a continuous path, as $n\to \infty$ the expression inside the expectation converges to $\exp(\varphi_0(x)).$ Moreover the expression inside the expectation is deterministically bounded above by $\exp(\|\varphi_0\|_\infty),$ and consequently $$\sup_{n\in\Bbb N}\sup_{x\in[0,1]} v_n(1,x) \leq \exp(\|\varphi_0\|_\infty).$$By the bounded convergence theorem we find that $v_n(1,x)$ converges a.s. to $e^{\varphi_0(x)},$ for each fixed $x\in [0,1]$. Likewise, the same bound may be used to conclude that $\int_{[0,1]} v_n(1,x)\psi(x) dx \stackrel{a.s.}{\to} \int_{[0,1]} e^{\varphi_0(x)}\psi(x)dx$ for all $\psi\in C[0,1]$.

Now we recall that $\mathcal X$ is a compact space, and therefore \textbf{any} collection of random variables in $\mathcal X$ is tight, so from the convergences stated in the previous paragraph, we can actually also conclude that $F(\xi+h_n^{\varphi_0})=[z_0(1,x)v_n(1,x)]$ converges in probability (with respect to the topology of $\mathcal X$) as $n\to\infty$ to $[z_0(1,x)e^{\varphi_0(x)}].$ Note here that $z_0$ has no dependence on $\varphi_0$ since only $v$ depends on $h$ in the decomposition from Step 2. 

Letting $D_0\subset C[0,1]$ be some countable dense subset consisting of smooth functions, we define $D:= \{ h^\varphi_n\in H: n\in \Bbb N, \varphi\in D_0\}$. We have shown that the closure in $\mathcal X$ of the random set $F(\xi+D)$ almost surely contains all elements of the form $[z(1,\cdot)e^\varphi]$ with $\varphi\in D_0$. Since $z(1,\cdot)$ is just a fixed positive function with no dependence on $\varphi\in D_0$, letting $\varphi$ vary throughout $D_0$ certainly gives a dense subset of $\mathcal X$, and therefore we conclude the proof of the theorem using the result of Step 3.
\end{proof}

Although Theorem \ref{supp} was proved using an argument specific to the KPZ equation, we remark that more abstract techniques such as those of \cite{HS19} could be used to prove a general Stroock-Varadhan type support theorem in much greater generality, in particular characterizing the support for the entire temporal trajectory (as opposed to just fixed time as we have done above) and in arbitrarily strong topologies. 

The above theorem is the only input to the uniqueness result that is specific to KPZ which is needed in order to prove the uniqueness theorem. The remainder of this section will focus on general results about strong Feller processes with invariant measures of full support, which will be useful in concluding the proof of the uniqueness theorem.

Recall that a Markov operator $\mathfrak P$ on a Polish space $\mathcal X$ is called \textbf{strong Feller} if it maps all bounded measurable functions to bounded continuous functions. Going forward, we will work extensively with this property. The next proposition allows us to determine when the strong Feller property can be ``upgraded'' from a smaller space with a finer topology to a larger space with a weaker topology.

\begin{prop}\label{upgrade}
Let $\mathfrak P$ be a Markov operator on some complete metric space $\mathcal X$. Let $\mathcal Y$ be another complete metric space contained densely in $\mathcal X$ (but with a separate metric) such that the inclusion map $\mathcal Y \hookrightarrow \mathcal X$ is continuous and such that open sets in $\mathcal Y$ are Borel measurable in $\mathcal X$. Suppose that the following assumptions hold: 

\begin{itemize}
    \item $\mathfrak P(1_\mathcal Y)=1$ identically, and the Markov operator $\bar{\mathfrak P}$ on $\mathcal Y$ defined by $\bar{\mathfrak P} f = \mathfrak P(f1_\mathcal Y)|_\mathcal Y$ is strong Feller. 

    \item The family of probability measures $\{\nu_\phi\}_{\phi\in \mathcal X}$ on $\mathcal Y$ defined by $\nu_\phi(A):=\mathfrak P1_A(\phi)$, satisfies the following regularity condition: whenever $\phi_n \stackrel{\mathcal X}{\longrightarrow} \phi$ the we have the weak convergence $\nu_{\phi_n} \to \nu_\phi$ as measures on $\mathcal Y.$
    
\end{itemize}
Then $\mathfrak P^2$ is a strong Feller operator on the larger space $\mathcal X$.
\end{prop}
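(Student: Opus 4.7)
The plan is to reduce continuity of $\mathfrak{P}^2 F$ on $\mathcal{X}$ (for bounded Borel $F\colon\mathcal{X}\to\mathbb{R}$) to the strong Feller property of $\bar{\mathfrak{P}}$ on $\mathcal{Y}$ combined with the weak continuity of $\phi\mapsto\nu_\phi$. The key point is to use the identity $\mathfrak{P}(1_{\mathcal{Y}})=1$ to collapse the first application of $\mathfrak{P}$ to something that only depends on $F|_{\mathcal{Y}}$, so that $\bar{\mathfrak{P}}$ can be invoked.

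First, I would fix $F\colon\mathcal{X}\to\mathbb{R}$ bounded and Borel measurable and check that the restriction $F|_{\mathcal{Y}}$ is Borel measurable on $\mathcal{Y}$: since the inclusion $\iota\colon\mathcal{Y}\hookrightarrow\mathcal{X}$ is continuous, $F|_{\mathcal{Y}}=F\circ\iota$ is Borel on $\mathcal{Y}$. Next, for every $\psi\in\mathcal{X}$ the measure $\nu_\psi$ is concentrated on $\mathcal{Y}$ by the assumption $\mathfrak{P}(1_\mathcal{Y})(\psi)=1$, so letting $\widetilde{F|_\mathcal{Y}}$ denote the extension of $F|_\mathcal{Y}$ by zero off $\mathcal{Y}$, one has $\mathfrak{P}F(\psi)=\int F\,d\nu_\psi=\int \widetilde{F|_\mathcal{Y}}\,d\nu_\psi=\mathfrak{P}(F|_\mathcal{Y}\cdot 1_\mathcal{Y})(\psi)$ for every $\psi\in\mathcal{X}$. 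In particular, for $\psi\in\mathcal{Y}$ this equals $\bar{\mathfrak{P}}(F|_\mathcal{Y})(\psi)$, so by the strong Feller property of $\bar{\mathfrak{P}}$ the function $G:=\bar{\mathfrak{P}}(F|_\mathcal{Y})\colon\mathcal{Y}\to\mathbb{R}$ is bounded and continuous.

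Now I would compute, for any $\phi\in\mathcal{X}$,
\[
\mathfrak{P}^2 F(\phi)\;=\;\int_{\mathcal{X}}\mathfrak{P}F(\psi)\,\nu_\phi(d\psi)\;=\;\int_{\mathcal{Y}} G(\psi)\,\nu_\phi(d\psi),
\]
again using that $\nu_\phi$ is concentrated on $\mathcal{Y}$. Given any sequence $\phi_n\to\phi$ in $\mathcal{X}$, the hypothesis gives $\nu_{\phi_n}\to\nu_\phi$ weakly as Borel probability measures on $\mathcal{Y}$, and since $G$ is bounded continuous on $\mathcal{Y}$ the Portmanteau theorem yields $\mathfrak{P}^2 F(\phi_n)\to\mathfrak{P}^2 F(\phi)$. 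Hence $\mathfrak{P}^2 F$ is continuous on $\mathcal{X}$, which is exactly the strong Feller property for $\mathfrak{P}^2$.

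The only genuinely delicate part is the measure-theoretic bookkeeping in the first step, namely confirming that $\nu_\phi$ may legitimately be viewed as a Borel measure on the smaller space $\mathcal{Y}$ (so that the hypothesis of weak convergence on $\mathcal{Y}$ and the application of Portmanteau both make sense). This is precisely what the assumption that open sets of $\mathcal{Y}$ are Borel in $\mathcal{X}$ buys: it guarantees that restricting $\nu_\phi$ to the Borel $\sigma$-algebra of $\mathcal{Y}$ produces a genuine Borel probability measure on $\mathcal{Y}$, making the comparison of measures on the two topologies coherent. Everything else is a direct chain of identifications, so I do not expect any further obstacle.
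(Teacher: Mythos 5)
Your proof is correct and is essentially the paper's argument: both reduce to the identity $\mathfrak P^2F(\phi)=\int_{\mathcal Y}\bar{\mathfrak P}(F|_{\mathcal Y})\,d\nu_\phi$ (valid since $\nu_\psi$ is concentrated on $\mathcal Y$), note that the integrand is bounded continuous on $\mathcal Y$ by the strong Feller property of $\bar{\mathfrak P}$, and conclude via the assumed weak convergence $\nu_{\phi_n}\to\nu_\phi$ on $\mathcal Y$. Your extra remarks on the measurability bookkeeping (restriction of $F$ along the continuous inclusion, and Borel sets of $\mathcal Y$ being Borel in $\mathcal X$ so that $\nu_\phi$ is a genuine Borel measure on $\mathcal Y$) are consistent with, and slightly more explicit than, the paper's treatment.
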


We remark that $\nu_\phi=\mathfrak P_*\delta_\phi$ simply equals the law of the associated Markov process at time 1, when started from initial state $\phi$. The condition $\mathfrak P(1_\mathcal Y) = 1$ identically is the same as saying that the $\mathcal X$-valued Markov process associated to $\mathfrak P$ takes values in $\mathcal Y$ almost surely, regardless of the initial state. Then $\bar{\mathfrak P}$ is the Markov operator of the process viewed in the state space $\mathcal Y$. We will apply the theorem when $\mathcal X$ and $\mathcal Y$ are given by \eqref{xspace} and \eqref{yspace} respectively. 
Using a result of \cite{KM22}, this will then allow us to extend the strong Feller property to the the entire space $\mathcal X$ in Corollary \ref{SFP} knowing that it holds on $\mathcal Y$. 

\begin{proof}
Suppose $\phi_n \stackrel{\mathcal X}{\longrightarrow} \phi.$ By hypothesis $\int_\mathcal Y g d\nu_{\phi_n} \to \int_\mathcal Y g d\nu_\phi$ for all bounded continuous $g:\mathcal Y\to \Bbb R.$

Now since $\mathfrak P(1_\mathcal Y)=1$, for any bounded measurable $f:\mathcal X\to \Bbb R$ and any $\phi\in \mathcal X$ we have the identity $$\mathfrak P^2 f(\phi) = \int_\mathcal Y \bar{\mathfrak P}(f1_\mathcal Y) d\nu_\phi .$$ By the strong Feller property of $\bar{\mathfrak P}$ the integrand is a bounded continuous function on $\mathcal Y$, and therefore we conclude that if $\phi_n \stackrel{\mathcal X}{\longrightarrow} \phi$ then $$\lim_{n\to \infty} \mathfrak P^2 f(\phi_{n}) = \lim_{n\to \infty}\int_\mathcal Y \bar{\mathfrak P}(f1_\mathcal Y) d\nu_{\phi_n} = \int_\mathcal Y \bar{\mathfrak P}(f1_\mathcal Y) d\nu_{\phi} = \mathfrak P^2 f(\phi),$$ proving the claim.
\end{proof}

\begin{cor}\label{SFP}
The process $[z(t,\cdot)]$ where $z$ solves \eqref{mshe} is strong Feller on the space $\mathcal X$ given by \eqref{xspace}.
\end{cor}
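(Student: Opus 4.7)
The plan is to apply Proposition \ref{upgrade} with $\mathcal X$ as in \eqref{xspace} and $\mathcal Y$ as in \eqref{yspace}, using the time-$1/2$ Markov operator in place of $\mathfrak P$ (so that its square gives the time-$1$ operator). The compatibility requirement that open sets in $\mathcal Y$ are Borel measurable in $\mathcal X$ was established right after \eqref{yspace}, and the strong Feller property of the restricted operator $\bar{\mathfrak P}_{1/2}$ on $\mathcal Y$ is precisely the main theorem of \cite{KM22}. So the work reduces to verifying the remaining two hypotheses of Proposition \ref{upgrade}: that $\mathfrak P_{1/2}(1_\mathcal Y) = 1$ identically on $\mathcal X$, and that the pushforward $\nu_\phi = (\mathfrak P_{1/2})_*\delta_\phi$ depends weakly continuously on $\phi\in\mathcal X$ when viewed as a probability measure on $\mathcal Y$.

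For the first hypothesis, fix any $[\mu]\in\mathcal X$ and pick the unique probability-measure representative $\mu$. The Hölder bound \eqref{bound2} of Theorem \ref{70} gives $z^\mu(1/2,\cdot)\in C^{1/2-\kappa}[0,1]$ almost surely for any $\kappa\in(0,1/2)$. Writing $z^\mu(1/2,y)=\int_{[0,1]} z_{1/2}(x,y)\mu(dx)\geq \|\mathbf{z}^{-1}\|_{L^\infty([1/2]\times[0,1]^2)}^{-1}$, Proposition \ref{nega} gives $z^\mu(1/2,\cdot)>0$ on $[0,1]$ almost surely. Hence $[z^\mu(1/2,\cdot)]\in\mathcal Y$ a.s., which is the assertion $\mathfrak P_{1/2}(1_\mathcal Y)([\mu])=1$.

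For the weak-continuity hypothesis, suppose $[\mu_n]\to[\mu]$ in $\mathcal X$, and pick probability-measure representatives so that $\mu_n\to\mu$ weakly on $[0,1]$. Coupling all solutions to a single noise $\xi$, the convergence statement \eqref{conv} of Theorem \ref{70} gives $z^{\mu_n}(1/2,\cdot)\to z^\mu(1/2,\cdot)$ in probability in $C^{1/2-\kappa}[0,1]$. The uniform lower bound $z^{\mu_n}(1/2,\cdot),\,z^\mu(1/2,\cdot)\geq \|\mathbf{z}^{-1}\|_{L^\infty([1/2]\times[0,1]^2)}^{-1}$ (which is a strictly positive random quantity independent of $n$, thanks to Proposition \ref{nega}) lets us compose with $\log$ and conclude that $\log z^{\mu_n}(1/2,\cdot)\to \log z^\mu(1/2,\cdot)$ in $C^{1/2-\kappa}[0,1]$ in probability. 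Passing to equivalence classes modulo additive constants, this is convergence in the metric $d_\mathcal Y$, so $[z^{\mu_n}(1/2,\cdot)]\to [z^\mu(1/2,\cdot)]$ in probability in $\mathcal Y$, which implies $\nu_{[\mu_n]}\to \nu_{[\mu]}$ weakly on $\mathcal Y$.

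Having verified both hypotheses, Proposition \ref{upgrade} yields that $(\mathfrak P_{1/2})^2=\mathfrak P$ is strong Feller on $\mathcal X$. The main subtlety in the argument is not the Hölder convergence of the solutions themselves, which is handed to us by \eqref{conv}, but the step from $C^{1/2-\kappa}$ convergence of the solutions to convergence in the $\mathcal Y$ metric on log-classes; this is exactly where the uniform negative-moment control from Proposition \ref{nega} becomes essential, since without it one cannot compose with $\log$ in a quantitative way.
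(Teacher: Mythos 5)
Your proposal is correct and follows essentially the same route as the paper: invoke the strong Feller result of \cite{KM22} on $\mathcal Y$, use the convergence statement \eqref{conv} of Theorem \ref{70} to verify the weak-continuity hypothesis, and conclude via Proposition \ref{upgrade}, with strict positivity (Proposition \ref{nega}) justifying both that the solution lands in $\mathcal Y$ and that one may compose with $\log$ to get convergence in $d_\mathcal Y$. The only difference is that you make explicit two points the paper leaves implicit — running the argument with the time-$1/2$ operator so that its square is the time-$1$ operator, and the quantitative lower bound used for the $\log$ step — which is a faithful filling-in rather than a different argument.
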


\begin{proof}
In \cite[Theorem 1.1]{KM22} the strong Feller property was proved on the finer space \eqref{yspace}. Now thanks to the convergence statement in \eqref{conv}, we can immediately apply Proposition \ref{upgrade} to ``upgrade'' the strong Feller property to the coarser space $\mathcal X$, since we know it holds on $\mathcal Y$. Note here that we are implicitly using the fact that if $f_n,f$ are strictly positive and continuous random functions, and if $\|f_n-f\|_{C^\kappa[0,1]}\to 0$ in probability, then $\|\log(f_n)-\log(f)\|_{C^\kappa[0,1]}\to 0$ in probability, as is needed in \eqref{yspace}.
\end{proof}

\begin{prop}\label{0.1}
Let $\mathfrak P$ be a strong Feller Markov operator on some connected Polish space $\mathcal X$. 
Let $\rho$ be an invariant probability measure for $\mathfrak P$ with full topological support on $\mathcal X$. If $(X_n)_n$ is the associated Markov process started from \textbf{any} deterministic initial condition in $\mathcal X$, and if $f:\mathcal X\to \Bbb R$ is bounded and measurable, then as $N\to \infty$
$$N^{-1}\sum_{j=1}^N f(X_j) \to \int_\mathcal X f\;d\rho,\;\;\;\; a.s..$$
Let $\mathcal Y$ be another Polish space contained in $\mathcal X$ (but with a separate topology) such that the inclusion map $\mathcal Y \hookrightarrow \mathcal X$ is continuous and such that open sets in $\mathcal Y$ are Borel measurable in $\mathcal X$. If $\mathfrak P(1_\mathcal Y) = 1$ identically, then $\rho(\mathcal Y)=1$ and moreover, the Markov semigroup $\bar{\mathfrak P}$ on $\mathcal Y$ defined by $\bar{\mathfrak P} f = \mathfrak P(f1_\mathcal Y)|_\mathcal Y$ is also strong Feller and $\rho$ is the unique invariant measure for $\bar{\mathfrak P}.$ 
\end{prop}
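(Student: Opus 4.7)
The plan is to proceed via the classical ``Doob/Khasminskii'' absolute continuity argument. The first step is to show that $\mathfrak P_*\delta_\phi \ll \rho$ for every $\phi \in \mathcal X$. Indeed, if $\rho(A) = 0$ then invariance gives $0 = \rho(A) = \int \mathfrak P 1_A \, d\rho$, so $\mathfrak P 1_A = 0$ $\rho$-a.e.; but strong Feller makes $\mathfrak P 1_A$ continuous, and full topological support of $\rho$ promotes $\rho$-a.e.\ vanishing of a continuous function to vanishing everywhere, hence $\mathfrak P 1_A \equiv 0$. The same mechanism combined with connectedness of $\mathcal X$ yields ergodicity of $\rho$: if a Borel set $A$ satisfies $\mathfrak P 1_A = 1_A$ $\rho$-a.e., then the continuous function $\mathfrak P 1_A$ takes values in $\{0,1\}$ on a set of full $\rho$-measure and hence, by full support and continuity, everywhere; connectedness then forces $\mathfrak P 1_A$ to be identically $0$ or identically $1$, giving $\rho(A) \in \{0,1\}$ by invariance.

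With these two facts in hand, Birkhoff's ergodic theorem applied to the stationary measure $\Bbb P_\rho$ produces, for each bounded Borel $f$, a set of $\rho$-full measure on which $N^{-1}\sum_{j=1}^N f(X_j)$ converges almost surely to $\int f \, d\rho$. Let $g(\phi) := \Bbb P_\phi\bigl(\lim_N N^{-1}\sum_{j=1}^N f(X_j) = \int f\, d\rho\bigr)$; the limit event is shift-invariant on canonical path space, so the Markov property yields $g = \mathfrak P g$ pointwise on $\mathcal X$. Since $g = 1$ $\rho$-a.e.\ and $\mathfrak P_*\delta_\phi \ll \rho$ for every $\phi$, we get $\mathfrak P g(\phi) = \int g \, d\mathfrak P_*\delta_\phi = 1$ for every $\phi$, hence $g \equiv 1$, which is the desired pointwise ergodic theorem from every initial condition. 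Uniqueness of the invariant measure is then automatic: for any invariant probability $\nu$, stationarity gives $\int f \, d\nu = \Bbb E_\nu[N^{-1}\sum_{j=1}^N f(X_j)]$, while the right-hand side tends to $\int f \, d\rho$ by the pointwise limit integrated against $\nu$ and dominated convergence, so $\nu = \rho$.

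For the second assertion, $\rho(\mathcal Y) = \int \mathfrak P 1_\mathcal Y \, d\rho = 1$ follows at once from invariance and the hypothesis $\mathfrak P(1_\mathcal Y) \equiv 1$. The Borel-compatibility hypothesis on $\mathcal Y \hookrightarrow \mathcal X$ ensures that for every bounded Borel $f:\mathcal Y\to\Bbb R$ the extension $f 1_\mathcal Y$ is bounded Borel on $\mathcal X$, so strong Feller of $\mathfrak P$ makes $\mathfrak P(f 1_\mathcal Y)$ continuous on $\mathcal X$, and then the continuous inclusion $\mathcal Y\hookrightarrow\mathcal X$ makes $\bar{\mathfrak P} f = \mathfrak P(f 1_\mathcal Y)|_\mathcal Y$ continuous on $\mathcal Y$. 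Uniqueness of $\rho$ as a $\bar{\mathfrak P}$-invariant probability follows because any $\bar{\mathfrak P}$-invariant probability on $\mathcal Y$ extends trivially to a $\mathfrak P$-invariant probability on $\mathcal X$ concentrated on $\mathcal Y$, which must coincide with $\rho$ by the first half. The delicate point in the whole argument is the passage from the $\rho$-a.e.\ conclusion of Birkhoff to the everywhere conclusion required by the theorem; it is precisely the interplay of the fixed-point identity $g = \mathfrak P g$ with the absolute continuity $\mathfrak P_*\delta_\phi\ll\rho$ that performs this upgrade, and both ingredients use the full-support and strong-Feller hypotheses in an essential way.
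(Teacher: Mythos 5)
Your argument is correct, and it reaches the same conclusions via the same two hypotheses used in the same essential way, but the packaging differs enough from the paper's proof to be worth noting. The paper cites the proof of Corollary 3.9 of \cite{HM17} for ergodicity of $\rho$ and then upgrades the Birkhoff $\rho$-a.e.\ statement to an everywhere statement by applying $\mathfrak P$ to the indicator $1_{E_f}$ of the set of good initial conditions: $\mathfrak P 1_{E_f}$ is continuous, integrates to $1$ against $\rho$ by invariance, hence equals $1$ on a dense set and so everywhere. You instead (i) give a self-contained proof of ergodicity via connectedness (a continuous function forced into $\{0,1\}$ on a dense set is $\{0,1\}$-valued everywhere, hence constant), and (ii) perform the upgrade through the classical Doob--Khasminskii route: the absolute continuity $\mathfrak P_*\delta_\phi\ll\rho$ for every $\phi$, combined with the harmonicity $g=\mathfrak Pg$ of $g(\phi)=\Bbb P_\phi(L)$, which rests on the shift-invariance of the Ces\`aro limit event $L$. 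These are equivalent mechanisms (the paper's step with $1_{E_f}$ is exactly your absolute-continuity lemma applied to $A=E_f^c$, and it too implicitly uses that the Ces\`aro limit is insensitive to dropping the first term), but your version isolates the reusable general facts, at the modest cost of having to note explicitly that $g$ is Borel measurable and that $L=\theta^{-1}L$. Your treatment of the second half matches the paper's: $\rho(\mathcal Y)=1$ by invariance, strong Feller of $\bar{\mathfrak P}$ from the Borel-compatibility of $\mathcal Y\hookrightarrow\mathcal X$, and uniqueness on $\mathcal Y$ by pulling any $\bar{\mathfrak P}$-invariant measure back to a $\mathfrak P$-invariant one (here you should record the two small measurability facts that make the extension legitimate: Borel subsets of $\mathcal Y$ are Borel in $\mathcal X$ because the class of subsets of $\mathcal Y$ that are Borel in $\mathcal X$ is a $\sigma$-algebra containing the open sets of $\mathcal Y$, and conversely $A\cap\mathcal Y$ is Borel in $\mathcal Y$ for $A$ Borel in $\mathcal X$ by continuity of the inclusion). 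These are minor elaborations, not gaps.
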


Before proving the proposition, we remark that the above convergence holds for every (not just $\mu$-almost every) initial condition, which is very important because we do \textbf{not} assume that $\mu$ has full support in the finer space $\mathcal Y$, only in the coarser space $\mathcal X$. The relevance of this proposition is that (under the strong Feller assumption) even if we can prove that the invariant measure of some Markov process (usually some SPDE) has full support in some relatively weak topology (that of $\mathcal X)$, then that automatically implies uniqueness of the invariant measure for the process in any stronger topology (that of $\mathcal Y)$ as long as the Markov process takes values in the stronger topological space and the open sets of the stronger topological space are Borel measurable in the weaker topology. This proposition therefore generalizes \cite[ Corollaries 3.8 and 3.9]{HM17}. Note however that our proposition does \textit{not} necessarily imply full support of the invariant measure on $\mathcal Y,$ and in general this can actually be false, but the \textit{uniqueness} in $\mathcal Y$ still holds nonetheless (regardless of whether or not the full support holds).

\begin{proof}
Let $\rho$ be the invariant measure for $\mathfrak P$ with full support. By invariance we see that $$\rho(\mathcal Y) = \int_\mathcal X \mathfrak P(1_\mathcal Y)\;d\rho = \int_\mathcal X1 \; d\rho = 1.$$
The fact that $\bar{\mathfrak P}$ is strong Feller is immediate from the fact that if $g:\mathcal Y\to \Bbb R$ is bounded and measurable, then $g1_\mathcal Y$ is bounded and measurable from $\mathcal X\to \Bbb R$ (thanks to the fact that Borel subsets of $\mathcal Y$ are also Borel subsets of $\mathcal X)$. Now $\mathfrak P(g1_\mathcal Y)$ is bounded and continuous on $\mathcal X$, and since the topology of $\mathcal Y$ is finer than that of $\mathcal X$ by assumption, the restriction $\mathfrak P(g1_\mathcal Y)|_\mathcal Y$ is automatically continuous from $\mathcal Y\to \Bbb R.$

The proof of Corollary 3.9 in \cite{HM17} shows (using connectedness of $\mathcal X$ and the full support property) that $\rho$ is necessarily ergodic thanks to the support assumption plus the strong Feller property. Thus if $(X_n)_n$ is the Markov process in $\mathcal X$ started from the measure $\rho$, then $\frac1N\sum_{n=1}^N f (X_n) \to \int f\;d\rho$ almost surely for all bounded measurable $f:\mathcal X\to \Bbb R$. Letting $\Bbb P_\phi$ denote the law (on the canonical space $\mathcal X^\Bbb N)$ of the process $(X_n)_n$ started from some deterministic point $\phi\in \mathcal X$, this means that for each such $f$ one has 
\begin{equation}\label{full}\int_{\mathcal X} \Bbb P_\phi\bigg(\frac1N\sum_{n=1}^N f(X_n)\to \int f\;d\rho \bigg)\rho(d\phi)=1.
\end{equation}
Fix some bounded measurable function $f:\mathcal X\to \Bbb R$, and let $E_f$ be the Borel set defined by
$$E_f:=\bigg\{\phi\in \mathcal X:\Bbb P_\phi\bigg(\frac1N\sum_{n=1}^N f(X_n)\to \int f\;d\rho \bigg)=1\bigg\},$$ so that $\rho(E_f)=1$ by \eqref{full}. Note by the strong Feller property that $\mathfrak P1_{E_f}$ is a continuous function from $\mathcal X\to \Bbb R.$ Moreover by invariance of $\rho$ we have that 
$$\int_\mathcal X \mathfrak P1_{E_f}\;d\rho = \rho(E_f) = 1,$$
and therefore $\mathfrak P1_{E_f}(\phi) = 1$ for $\rho$-a.e. $\phi\in \mathcal X$. Since $\rho$ has full support on $\mathcal X$, any set of full measure is necessarily dense in $\mathcal X$. Thus the continuous function $\mathfrak P1_{E_f}$ equals 1 on a dense set, and therefore is identically 1 on all of $\mathcal X$. This means that 
\begin{equation}\Bbb P_\phi\bigg(N^{-1}\sum_{n=1}^N f(X_{n+1}) \to \int f\;d\rho\bigg)=1\label{eq1}
\end{equation}for \textbf{every} $\phi \in \mathcal X$ (not just on a set of full $\rho$-measure), so that $E_f$ is actually the entire space $\mathcal X$. The fact that $\rho$ is the unique invariant measure for $\bar{\mathfrak P}$ on $\mathcal Y$ is then immediate from \eqref{eq1} and the fact that $g1_\mathcal Y$ is a bounded measurable function on $\mathcal X$ whenever $g$ is a bounded measurable function on $\mathcal Y.$
\end{proof}




\begin{cor}\label{mr2}
Let $\mathcal X$ and $\mathcal Y$ be the state spaces defined in \eqref{xspace} and \eqref{yspace}, respectively. For any $A,B\in \Bbb R$, consider the $\mathcal X$-valued Markov process $X_{A,B}(t):=[z(t,0)]$ where $z$ solves \eqref{mshe}. Then, regardless of whether we view the state space as $\mathcal X$ or as $\mathcal Y$, the process $X_{A,B}$ has a unique ergodic invariant measure $\rho_{A,B}$ which is supported on $\mathcal Y$. Furthermore starting from \textbf{any} deterministic initial condition $[\mu]\in \mathcal X$, and for any bounded Borel-measurable function $F:\mathcal X\to \Bbb R$, one has the a.s. convergence $$\lim_{N\to\infty}\frac1N\sum_{n=1}^N F(X_{A,B}(n)) = \int_\mathcal Y F\;d\rho_{A,B}.$$
\end{cor}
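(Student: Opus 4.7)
My plan is to assemble the machinery already developed in the paper rather than do any new direct computation. The three main inputs are Corollary \ref{existence} (existence of at least one invariant probability measure on $\mathcal X$, automatically supported on $\mathcal Y$), Theorem \ref{supp} (the time-one marginal of $[z(t,\cdot)]$ from any deterministic initial condition has full support in $\mathcal X$, so any invariant measure must have full support on $\mathcal X$), and Corollary \ref{SFP} (the strong Feller property of $\mathfrak P$ on $\mathcal X$). With these three facts in hand, the corollary reduces to an application of the abstract Proposition \ref{0.1}.

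First I would verify that the hypotheses of Proposition \ref{0.1} are met with the coarser space $\mathcal X$ from \eqref{xspace} and the finer space $\mathcal Y$ from \eqref{yspace}. Connectedness of the Polish space $\mathcal X$ follows from its isometric identification with the space of probability measures on $[0,1]$ under a Prohorov metric, which is convex hence path-connected. The continuous inclusion $\mathcal Y\hookrightarrow\mathcal X$ and the measurability of open sets of $\mathcal Y$ as Borel subsets of $\mathcal X$ were both noted in the paragraph following \eqref{yspace}. Finally, the requirement $\mathfrak P(1_\mathcal Y)=1$ identically says that for every deterministic $[\mu]\in\mathcal X$ the random element $[z(1,\cdot)]$ lies in $\mathcal Y$ almost surely; this is immediate from combining the strict positivity of solutions (Proposition \ref{nega}) with the parabolic Hölder bound \eqref{bound2} of Theorem \ref{70}, which together show $z(1,\cdot)\in C^\kappa[0,1]$ and is strictly positive a.s.

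Applying Proposition \ref{0.1} with $\rho=\rho_{A,B}$ then yields simultaneously: (i) $\rho_{A,B}$ is supported on $\mathcal Y$ and is the unique invariant measure of the restricted operator $\bar{\mathfrak P}$ on $\mathcal Y$; and (ii) for every deterministic $[\mu]\in\mathcal X$ (not only $\rho_{A,B}$-almost every) and every bounded Borel measurable $F:\mathcal X\to\Bbb R$, the almost sure convergence
\[
\frac1N\sum_{n=1}^N F(X_{A,B}(n)) \ \longrightarrow\ \int_\mathcal X F\,d\rho_{A,B}
\]
holds, which is exactly the ergodic theorem stated in the corollary. Ergodicity of $\rho_{A,B}$ itself is part of the output of Proposition \ref{0.1} (see the discussion following its statement).

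To close the loop on uniqueness in the larger space $\mathcal X$, I would argue as follows: if $\rho'$ is any invariant probability measure for $\mathfrak P$ on $\mathcal X$, then Corollary \ref{existence} forces $\rho'(\mathcal Y)=1$, so $\rho'$ is also an invariant measure for $\bar{\mathfrak P}$ on $\mathcal Y$, and the uniqueness on $\mathcal Y$ just established gives $\rho'=\rho_{A,B}$. The main obstacle in this program — and the one that has already been resolved in earlier sections — is precisely the full support statement (Theorem \ref{supp}) together with the upgrade of the strong Feller property from $\mathcal Y$ to $\mathcal X$ (Proposition \ref{upgrade} and Corollary \ref{SFP}); once these are available, the present corollary is essentially a formal consequence.
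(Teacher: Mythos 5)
Your proposal is correct and follows essentially the same route as the paper: existence from Corollary \ref{existence}, the strong Feller property on $\mathcal X$ from Corollary \ref{SFP}, full support from Theorem \ref{supp}, and then an application of Proposition \ref{0.1} to get uniqueness and the everywhere-ergodic theorem on both $\mathcal X$ and $\mathcal Y$. The extra details you supply (verifying connectedness, $\mathfrak P(1_\mathcal Y)=1$ via Proposition \ref{nega} and \eqref{bound2}, and deducing uniqueness on $\mathcal X$ from uniqueness on $\mathcal Y$) are exactly the steps the paper leaves implicit.
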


\begin{proof}
Existence of an invariant measure for all $A,B\in\Bbb R$ follows immediately by compactness of the state space $\mathcal X$ as remarked in Corollary \ref{existence}. The strong Feller property of the process on $\mathcal X$ was shown in Corollary \ref{SFP}. The fact that $\rho_{A,B}$ has full support in $\mathcal X$ follows from Theorem \ref{supp}. Therefore we can conclude the uniqueness and ergodic theorem on both $\mathcal X$ and $\mathcal Y$ by applying Proposition \ref{0.1}.
\end{proof}

Let $C_+[0,1]$ denote all positive continuous functions on $[0,1]$. We note that if $F:C_+[0,1]\to \Bbb R$ is any measurable functional, then it gives rise to a family of measurable functionals from $\mathcal X\to \Bbb R$ by setting for instance $\bar F_1([\mu]):= F(f/f(1))$, $\bar F_2([\mu])=F(f/\int_{[0,1]}f)$, $\bar F_3([\mu]) = F(f/\|f\|_{L^p[0,1]})$, etc, where all of these are understood to be zero unless the measure $\mu$ is absolutely continuous with continuous density $f$. Another important class of measurable functions on $\mathcal X$ are induced by measurable functions $G: C^{-1/2-\kappa}[0,1]\to \Bbb R$ in which case we may define $\bar G:\mathcal X \to \Bbb R$ given by $\bar G([\mu]) = G(\partial_x \log f),$ again understood to be zero unless $\mu$ is absolutely continuous with strictly positive density $f\in C^{1/2-\kappa}[0,1]$. Such functionals $\bar G$ are related to the so-called stochastic Burgers equation with inhomogeneous Dirichlet boundary conditions on $[0,1].$ Consequently there is a fairly rich class of observables for which the above ergodic theorem applies. 

Another corollary is the existence of an almost sure exponential growth rate independent of the initial data: \begin{cor}\label{lim}Starting from any initial data $\nu$, the limit $$\lim_{N\to \infty} \frac1N\log z(N,0)$$
exists and is constant almost surely. The constant equals $\Bbb E_{\Bbb P_{\rho_{A,B}} }[\log z(1,0)-\log z(0,0)]$ 
and is therefore independent of the initial data $\nu$. Here $\Bbb P_{\rho_{A,B}}$ denotes the law on $\mathcal X^\Bbb N$ of the process started from stationarity.
\end{cor}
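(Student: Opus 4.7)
The plan is to telescope $\log z(N,0)$ into one-step increments $\Phi_n:=\log z(n,0)-\log z(n-1,0)$, apply Birkhoff's ergodic theorem under $\Bbb P_{\rho_{A,B}}$ to identify the limit, and extend to arbitrary deterministic initial data by coupling to a stationary copy and invoking the synchronization \eqref{sync} together with the convolution property (Proposition \ref{convo}). By linearity of \eqref{mshe} and the convolution property,
\[
\Phi_n \;=\; \log\!\int_0^1 z_{n-1,n}(x,0)\,\phi_{n-1}(x)\,dx, \qquad \phi_{n-1}(x):=\frac{z(n-1,x)}{z(n-1,0)},
\]
so $\Phi_n$ is invariant under $z\mapsto cz$ and is a measurable function only of $[z(n-1,\cdot)]\in\mathcal Y$ and the noise increment $\xi|_{[n-1,n]}$.

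Under $\Bbb P_{\rho_{A,B}}$, the sequence $\bigl([z(n-1,\cdot)],\xi|_{[n-1,n]}\bigr)$ is stationary (by invariance of $\rho_{A,B}$) and ergodic (by its uniqueness, Corollary \ref{mr2}, together with the independence of disjoint noise increments), so $(\Phi_n)_n$ is a stationary ergodic real-valued sequence. Integrability $\Bbb E_{\Bbb P_{\rho_{A,B}}}|\Phi_1|<\infty$ follows from the polynomial Hölder moment bounds on $z_{0,1}$ and $z_{0,1}^{-1}$ in Proposition \ref{nega} and Theorem \ref{70}, combined with analogous bounds on $\|\phi_0\|_\infty$ and $\|\phi_0^{-1}\|_\infty$ under $\rho_{A,B}$: these are obtained by viewing $\phi_0$ as the normalized one-step image of a mass-one profile and using Proposition \ref{nega} to control the relevant quotients of propagators. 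Birkhoff's theorem then yields
\[
\frac{1}{N}\bigl[\log z(N,0)-\log z(0,0)\bigr] \;=\; \frac1N\sum_{n=1}^N\Phi_n \;\xrightarrow[N\to\infty]{\text{a.s.}}\; \Bbb E_{\Bbb P_{\rho_{A,B}}}[\Phi_1] \;=:\; c
\]
under $\Bbb P_{\rho_{A,B}}$, and since $\log z(0,0)$ is a.s.\ finite, $N^{-1}\log z(N,0)\to c$ a.s.\ from a stationary start.

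For a deterministic initial condition $\nu$, I couple $z^\nu$ with a stationary copy $z^\mu$ driven by the same noise, with $\mu\sim\rho_{A,B}$ drawn independently of the noise. Setting $\Delta_n:=\log z^\nu(n,0)-\log z^\mu(n,0)$ and using the convolution property again,
\[
\Delta_{n+1}-\Delta_n \;=\; \log\frac{\int_0^1 z_{n,n+1}(x,0)\,\phi^\nu_n(x)\,dx}{\int_0^1 z_{n,n+1}(x,0)\,\phi^\mu_n(x)\,dx}.
\]
Since $\phi^\nu_n(0)=\phi^\mu_n(0)=1$, the forward-in-time synchronization in $\mathcal Y$ (obtained from \eqref{sync} by the time-translation invariance in law of the driving white noise) gives $\|\log\phi^\nu_n-\log\phi^\mu_n\|_{C^\kappa}\to 0$ a.s., hence $\phi^\nu_n/\phi^\mu_n\to 1$ uniformly on $[0,1]$ a.s. This forces the ratio inside the logarithm above to tend to $1$, so $\Delta_{n+1}-\Delta_n\to 0$ a.s., and Cesàro averaging gives $\Delta_N/N\to 0$ a.s. Combining with the stationary convergence for $\log z^\mu(N,0)$ delivers $N^{-1}\log z^\nu(N,0)\to c$ a.s. The main obstacle I foresee is establishing integrability of $\Phi_1$ under $\Bbb P_{\rho_{A,B}}$ in the absence of an explicit description of $\rho_{A,B}$, which requires extracting the necessary tail bounds on the stationary profile solely from the dynamics via Proposition \ref{nega} and invariance.
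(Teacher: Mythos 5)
Your proposal is correct in substance but takes a genuinely different route from the paper. The paper's proof is essentially two lines: since the ergodic theorem of Corollary \ref{mr2} already holds for \emph{every} deterministic initial condition (this is the payoff of the support theorem plus the strong Feller property, via Proposition \ref{0.1}), one applies it, extended to the $L^2$ functional $F(([z_j])_j)=\log z_1(0)-\log z_0(0)$ depending on two coordinates, and telescopes; no coupling is needed to handle a general $\nu$. You instead run Birkhoff only under the stationary measure and then transfer to arbitrary initial data by coupling to a stationary copy and invoking the Section 4 synchronization machinery. What your route buys is independence from the strong Feller property and the support theorem, so it would survive for spatially colored noise where the Section 3 inputs are unavailable; what it costs is having to supply by hand the integrability of $\Phi_1$ under $\rho_{A,B}$ and the ergodicity of the augmented stationary sequence $([z(n-1,\cdot)],\xi|_{[n-1,n]})$ (the latter deserves a sentence: it is itself a Markov chain whose unique invariant measure is $\rho_{A,B}$ tensored with the law of a unit noise increment). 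Your plan for integrability is the right one: bounding $\|\phi_0^{\pm 1}\|_\infty$ via the quotient $\sup_{a,x} z_{0,1}(a,x)/\inf_{a,x}z_{0,1}(a,x)$, which has all moments by Proposition \ref{nega} uniformly over initial data, so no explicit description of $\rho_{A,B}$ is needed (the paper's own proof quietly requires the same kind of bound to pass from bounded $F$ to the unbounded $F$ above). One step to tighten: the forward-in-time almost sure synchronization does not follow from \eqref{sync} by time-translation invariance alone, since equality in law of fixed-time marginals does not transport an almost sure statement; instead use the exponential expectation bound of Theorem \ref{1f1s2} (whose law is the same forward or backward in time), Markov's inequality and Borel--Cantelli to get forward a.s. convergence in $d_\mathcal X$, and then rerun the modulus-of-continuity argument of Corollary \ref{1f1sy} forward in time to upgrade to the uniform convergence of $\phi^\nu_n/\phi^\mu_n$ you need. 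Incidentally, writing $\Phi_n$ as a function of the equivalence class $[z(n-1,\cdot)]$ and the noise increment is cleaner than the paper's definition of $F$ on pairs of coordinates, where the increment is only almost surely well defined.
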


\begin{proof}
Define the canonical shift map $\theta : \mathcal X^\Bbb N\to \mathcal X^\Bbb N$ by sending $\theta((\phi_j)_j) = (\phi_{j+1})_j.$ Let $\Bbb P_\phi$ denote the measure on $\mathcal X^\Bbb N$ corresponding to the Markov process with initial state $\phi$. An easy consequence of Corollary \ref{mr2} is that for any $\phi\in \mathcal X$ and any measurable $F\in L^2(\Bbb P_\phi)$ that depends on only a finite number of coordinates, one has the convergence $$\frac1N \sum_{n=1}^N F\theta^n \to \int_{\mathcal X^\Bbb N} F \; d\Bbb P_{\rho_{A,B}},\;\;\;\;\;\;\;\Bbb P_\phi\text{-}a.s..$$ Letting $F(([z_j])_j):= \log z_1(0)-\log z_0(0)$ now immediately gives the claim (note that $F$ is well defined on equivalence classes, since scaling $z_0$ by some constant also scales $z_1$ by the same constant $\Bbb P_\phi$-almost surely).
\end{proof}

Next let us address the question of total variation convergence.

\begin{defn}\label{ufq}
Let $\mathcal X$ be a Polish space. If $\mathfrak P$ is a Markov operator on $\mathcal X$ and $\phi\in\mathcal X$, let $\nu_\phi(A):=\mathfrak P1_A(\phi),$ in other words $\nu_\phi = \mathfrak P_*\delta_\phi.$ Then $\mathfrak P$ is called \textit{ultra Feller} if $\|\nu_{\phi_n}-\nu_\phi\|_{TV}\to 0$ whenever $\phi_n\stackrel{\mathcal X}{\to} \phi$.
\end{defn}

\begin{prop}\cite[Theorem 1.6.6]{Hai07}\label{UFP}
Let $\mathcal X$ be a Polish space and let $\mathfrak P$ be a strong Feller Markov operator on $\mathcal X$. Then $\mathfrak P^2$ is ultra Feller.
\end{prop}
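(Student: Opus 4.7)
The plan is to follow the classical argument of Mokobodzki (cf.\ Hairer's notes), which proceeds in two main stages. In the first stage, I would establish the following equivalent reformulation of strong Feller: \emph{the map $\phi \mapsto \nu_\phi$ is continuous from $\mathcal X$ into $\mathcal P(\mathcal X)$ equipped with the topology of setwise convergence}, i.e.\ $\phi_n \to \phi$ forces $\nu_{\phi_n}(A) \to \nu_\phi(A)$ for every Borel $A$. This is immediate from $\mathfrak P 1_A(\phi) = \nu_\phi(A)$ combined with a monotone-class argument and bounded convergence to pass from indicators of Borel sets to arbitrary bounded Borel test functions.

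In the second stage, Chapman--Kolmogorov gives
\[
\bigl\|\mathfrak P^2_*\delta_{\phi_n} - \mathfrak P^2_*\delta_\phi\bigr\|_{TV} = \sup_{A \in \mathcal B(\mathcal X)} \biggl|\int_{\mathcal X} (\mathfrak P 1_A)(\psi) \, (\nu_{\phi_n} - \nu_\phi)(d\psi)\biggr|.
\]
By the first stage, $\nu_{\phi_n} \to \nu_\phi$ setwise, and by strong Feller each $\mathfrak P 1_A$ is bounded continuous, so for each fixed $A$ the individual integral tends to zero. The content of the proposition is to make this uniform over $A$.

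To achieve the uniformity, I would invoke the Vitali--Hahn--Saks theorem, which asserts that a setwise-convergent sequence of probability measures is automatically uniformly countably additive. Taking a common dominating measure $\lambda := \nu_\phi + \sum_n 2^{-n}\nu_{\phi_n}$ and writing $f_n := d\nu_{\phi_n}/d\lambda$, $f := d\nu_\phi/d\lambda$, this uniform countable additivity combined with Dunford--Pettis yields relative weak-$L^1(\lambda)$ compactness of $\{f_n\}$, and setwise convergence pins down the weak-$L^1$ limit as $f$. The desired uniform bound then follows from pairing the weak-$L^1$ convergence $f_n \to f$ against the strong-Feller-regularized family $\{\mathfrak P 1_A\}_A \subset L^\infty(\lambda) \cap C_b(\mathcal X)$ via a Nikodym-type equi-integrability argument.

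The main obstacle is precisely this uniform control over $A$: setwise convergence alone does \emph{not} imply total-variation convergence (simple oscillatory counterexamples on $[0,1]$ involving $\sin(2\pi n x)$ show this), so the second application of $\mathfrak P$ is essential and the result genuinely fails in general for $\mathfrak P$ itself. The regularization provided by the strong Feller kernel converts the mere equi-measurability of $\{1_A\}$ into a form of equi-integrability for $\{\mathfrak P 1_A\}$ against any weakly convergent sequence of densities, and combined with the uniform absolute continuity guaranteed by Vitali--Hahn--Saks this yields the required uniform-in-$A$ TV estimate.
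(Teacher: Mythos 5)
Note first that the paper itself does not prove this proposition: it cites \cite{Hai07}, so your attempt has to be measured against that classical Dellacherie--Meyer style argument. Your first stage and the reduction are fine and do match the beginning of that proof: one application of strong Feller turns $\phi_n\to\phi$ into setwise convergence $\nu_{\phi_n}\to\nu_\phi$, and with a common dominating measure $\lambda$ the densities $f_n$ converge to $f$ weakly in $L^1(\lambda)$ and are uniformly integrable (you reach this via Vitali--Hahn--Saks and Dunford--Pettis, though it also follows in one line from $\int g f_n\,d\lambda=\mathfrak P g(\phi_n)\to\mathfrak P g(\phi)$ for every bounded measurable $g$).

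The gap is exactly at the step you yourself identify as the crux and then dispose of in one sentence. Weak $L^1(\lambda)$ convergence of $f_n$ paired against a uniformly bounded family of \emph{continuous} test functions does not converge uniformly over the family: with $\lambda$ Lebesgue on $[0,1]$, $f_n=1+\sin(2\pi n\,\cdot)$ and test functions $g_n=\sin(2\pi n\,\cdot)$ the pairing stays at $1/2$. So ``each $\mathfrak P 1_A$ is bounded continuous'' is not enough; and no property of the sequence $\{\nu_{\phi_n}\}$ alone (setwise convergence, uniform countable additivity, uniform integrability of the $f_n$) can suffice either, as your own oscillatory example shows. The asserted ``Nikodym-type equi-integrability of $\{\mathfrak P 1_A\}$ against any weakly convergent sequence of densities'' is precisely the statement that needs proof, and no mechanism for it is given. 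In the cited proof the mechanism comes from the Polish structure, which your sketch never uses: by uniform integrability and tightness of $\lambda$ one restricts to a compact set $K$ with $\sup_n\int_{K^c}f_n\,d\lambda$ small; on $K$, continuity of $\psi\mapsto \mathfrak P 1_{A}(\psi)$ together with compactness and Dini's theorem gives $\sup_{\psi\in K}\mathfrak P(\psi,A_k)\to 0$ for every sequence $A_k\downarrow\emptyset$, i.e.\ the kernel family $\{\mathfrak P(\psi,\cdot):\psi\in K\}$ is uniformly countably additive; consequently the operator $D\mapsto\int_K \mathfrak P(\psi,\cdot)\,D(\psi)\,\lambda(d\psi)$ is weakly compact from $L^1(\lambda)$ into the space of finite measures, and since $L^1$ has the Dunford--Pettis property (equivalently, by a truncation/Egorov argument) it maps the weakly convergent sequence $f_n 1_K$ to a total-variation convergent one, which is the uniform-in-$A$ bound. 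Without this compactness/Dini input — i.e.\ without using that the test family is generated by a single strong Feller kernel evaluated on a compact set, rather than being an arbitrary bounded family of continuous functions — the final step of your proposal does not go through.
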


If $\mathcal M_1(\mathcal X)$ denotes the space of probability measures on the Polish space $\mathcal X$, then we claim that the ultra Feller property is equivalent to the adjoint operator $\mathfrak P_*$ being continuous from the topology of weak convergence on $\mathcal M_1(\mathcal X)$ to the topology of total variation convergence on $\mathcal M_1(\mathcal X)$. The backward implication is obvious. To prove the forward implication suppose $\gamma_n \in \mathcal M_1(\mathcal X)$ converges weakly to $\gamma \in \mathcal M_1(\mathcal X)$. Then by Skorohod's representation theorem we can choose $\mathcal X$-valued random variables $\boldsymbol{\phi}_n,\boldsymbol{\phi}$ such that the law of each $\boldsymbol{\phi}_n$ is $\gamma_n$, the law of $\boldsymbol{\phi}$ is $\gamma$, and $\boldsymbol{\phi}_n\to \boldsymbol{\phi}$ almost surely in the topology of $\mathcal X$. By assumption $\| \mathfrak P_*(\delta_{\boldsymbol{\phi}_n} - \delta_{\boldsymbol{\phi}})\|_{TV}\to 0$ almost surely. By the bounded convergence theorem this implies that $\Bbb E[\| \mathfrak P_*(\delta_{\boldsymbol{\phi}_n} - \delta_{\boldsymbol{\phi}})\|_{TV} ]\to 0$ which is strong enough to prove the claim since for each $n\ge 1$ we have $$\|\mathfrak P_*(\gamma_n-\gamma)\|_{TV}= \big\|\Bbb E[\mathfrak P_*(\delta_{\boldsymbol{\phi}_n} - \delta_{\boldsymbol{\phi}})] \big\|_{TV}\leq \Bbb E[\| \mathfrak P_*(\delta_{\boldsymbol{\phi}_n} - \delta_{\boldsymbol{\phi}})\|_{TV} ].$$

\begin{thm}[An automatic spectral gap from unique ergodicity]\label{tv3}
Suppose that $(\mathfrak P_t)_{t\ge 0}$ is a strongly continuous semigroup of Markov operators on a compact metric space $\mathcal X$. Assume that $\mathfrak P_t$ is strong Feller for each $t> 0$ and admits a unique invariant probability measure $\rho$. Then there exist $C,c>0$ such that for all $t\ge 0$ 
$$ \sup_{\phi\in\mathcal X} \big\| \rho - \mathfrak P_t^*\delta_\phi\big\|_{TV}\leq Ce^{-ct},$$ 
where $\mathfrak P_t^*$ is the adjoint Markov semigroup, and $\delta_\phi$ denotes a Dirac mass at the initial condition $\phi\in \mathcal X$. In particular this is true for the Markov operator of the open KPZ equation on 
the space $\mathcal X$ given in \eqref{xspace}.
\end{thm}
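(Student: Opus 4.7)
The plan is a Doeblin-type contraction followed by iteration. Writing $\nu_\phi^t := \mathfrak P_t^*\delta_\phi$, if I can produce a time $T_0 > 0$ with $\gamma := \sup_{\phi\in\mathcal X} \|\nu_\phi^{T_0} - \rho\|_{TV} < 1$, then Hahn-decomposing any mean-zero signed measure $\mu$ as $\tfrac12\|\mu\|_{TV}(\tilde\mu^+ - \tilde\mu^-)$ with $\tilde\mu^\pm$ probabilities, and bounding $\|\mathfrak P_{T_0}^*\tilde\mu^\pm - \rho\|_{TV}\leq \int \|\nu_\phi^{T_0}-\rho\|_{TV}\, d\tilde\mu^\pm \leq \gamma$ by Bochner's inequality, yields $\|\mathfrak P_{T_0}^*\mu\|_{TV}\leq \gamma\|\mu\|_{TV}$ on the invariant subspace $V=\{\mu(\mathcal X)=0\}$. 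Iterating gives $\sup_\phi\|\nu_\phi^{nT_0}-\rho\|_{TV}\leq 2\gamma^n$, and the monotonicity of $t\mapsto \|\nu_\phi^t-\rho\|_{TV}$ (from $\mathfrak P_s^*$ being a TV-contraction) interpolates between integer multiples of $T_0$ to deliver the claimed $Ce^{-ct}$ bound.

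By Proposition \ref{UFP} applied to $\mathfrak P_{t/2}$, the operator $\mathfrak P_t$ is ultra Feller for every $t>0$, so $\phi\mapsto \nu_\phi^t$ is continuous from $\mathcal X$ into $(\mathcal M_1(\mathcal X),\|\cdot\|_{TV})$. Combined with compactness of $\mathcal X$ and the monotonicity above, a standard finite-subcover argument shows that \emph{any} pointwise convergence $\|\nu_\phi^t-\rho\|_{TV}\to 0$ (for each $\phi$ individually) automatically upgrades to a uniform bound $\sup_\phi \|\nu_\phi^{T_0}-\rho\|_{TV} < 1$ for some $T_0$. The entire theorem therefore reduces to establishing pointwise TV convergence, which is the main obstacle because $\rho$ need not have full support on $\mathcal X$ and so Doob's classical theorem does not apply directly.

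To establish pointwise convergence, let $K:=\mathrm{supp}(\rho)$. Strong Feller implies that $\mathfrak P_t 1_{K^c}$ is continuous; together with $\int \mathfrak P_t 1_{K^c}\,d\rho = 0$ and non-negativity, this forces $\mathfrak P_t 1_{K^c}\equiv 0$ on $K$, so $K$ is forward absorbing. The non-negative function $q(\phi):=\Bbb P_\phi(\tau_K=\infty) = \lim_{t\to\infty}\mathfrak P_t 1_{K^c}(\phi)$, where $\tau_K$ is the hitting time of $K$, is then upper semicontinuous (decreasing monotone limit of continuous functions), vanishes on $K$, and satisfies $q(\phi) = \int q\,d\nu_\phi^s$ by the Markov property and absorption. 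Averaging in $s$, invoking the Krylov--Bogoliubov convergence $\tfrac1T\int_0^T \nu_\phi^s\,ds \rightharpoonup \rho$ (where uniqueness of $\rho$ is essential), and applying Portmanteau for USC functions gives $q(\phi)\leq \int q\,d\rho = 0$, so the process reaches $K$ from every starting point almost surely. A second compactness bootstrap upgrades this to a uniform exponential tail $\Bbb P_\phi(\tau_K>t)\leq Ce^{-ct}$. On the compact absorbing set $K$, where $\rho$ has full support, Doob's theorem applied to the restricted strong Feller semigroup provides $\|\nu_\phi^t-\rho\|_{TV}\to 0$ for each $\phi\in K$; splitting $\nu_\phi^t$ via strong Markov at $\tau_K$ and combining the exponential tail of $\tau_K$ with the convergence on $K$ then extends this pointwise TV convergence to all $\phi\in \mathcal X$, closing the argument.
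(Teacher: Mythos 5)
You take a genuinely different route from the paper (which proves the result by showing, via the ultra Feller property and compactness of $\mathcal X$, that $\mathfrak P_t^*$ is a \emph{compact} operator on mean-zero measures and then ruling out peripheral eigenvalues $e^{i\theta}$ using the continuous-time semigroup structure and Gelfand's formula). Several of your steps are sound and nicely argued: the Hahn-decomposition iteration reducing everything to $\sup_\phi\|\mathfrak P_{T_0}^*\delta_\phi-\rho\|_{TV}<1$; the upgrade from pointwise to uniform smallness via ultra Fellerness, compactness and monotonicity of $t\mapsto\|\mathfrak P_t^*\delta_\phi-\rho\|_{TV}$; the absorption of $K=\mathrm{supp}(\rho)$; and the hitting argument combining upper semicontinuity of $q$, Krylov--Bogoliubov averages and Portmanteau, followed by the Dini-plus-Markov bootstrap to an exponential tail for $\tau_K$ (all of this implicitly needs a c\`adl\`ag strong Markov realization of the semigroup, which is available here since strong Feller plus strong continuity gives a Feller semigroup, but you should say so).

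The genuine gap is the final invocation of ``Doob's theorem'' on $K$ under the hypotheses ``strong Feller $+$ $\rho$ has full support.'' Doob's theorem (via Khasminskii's regularity argument) requires \emph{topological irreducibility at a fixed time}: $\mathfrak P_{t_0}1_U(\phi)>0$ for \emph{every} $\phi$ and every nonempty open $U\subset K$. Full support of the invariant measure does not give this, and the implication you are using is false as stated: the deterministic two-state cycle is strong Feller on a compact space with a unique invariant measure of full support, yet has no total variation convergence. So any correct repair must use the continuous-time structure in an essential way, and in your proposal this burden falls entirely on the Doob step, where it is not discharged. What full support does yield (e.g.\ by a resolvent argument: if $\int_0^\infty e^{-\lambda t}\mathfrak P_t1_U(\phi)\,dt=0$ then the occupation measures from $\phi$ live in the closed set $\{ \int_0^\infty e^{-\lambda t}\mathfrak P_t1_U\,dt=0\}$, forcing $\rho(U)=0$ by unique ergodicity) is only \emph{accessibility}: for each $\phi$ and each open $U$ there is some $t=t(\phi,U)$ with $\mathfrak P_t1_U(\phi)>0$. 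That is strictly weaker than the fixed-time irreducibility the standard Doob/Khasminskii statements need, and bridging that gap --- or otherwise excluding the ``periodic'' behaviour that continuous time is supposed to forbid --- is precisely the hard core of the theorem; it is the part the paper handles by showing $\mathfrak P_{t_0}^*$ cannot have eigenvalues $e^{i\theta}$ on $\mathcal M_0(\mathcal X)$. As written, your proof therefore reduces the theorem to an unproved (and, without irreducibility, incorrectly quoted) ergodicity statement rather than establishing it.
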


In other words, in continuous time, compactness plus the strong Feller property automatically allows us to upgrade unique ergodicity to geometric ergodicity (in fact a spectral gap). The intuitive idea of the proof is that compactness of $\mathcal X$ plus the strong Feller property makes the adjoint semigroup exhibit properties that are similar to those of a finite-state Markov chain, in particular an automatically positive spectral gap.

The reason that we are breaking our usual convention and using a continuous time parameter here is because we need it for the proof. The discrete time analogue is simply false: consider the simple random walk on $\Bbb Z/4\Bbb Z$ or even $\Bbb Z/2\Bbb Z$. But also notice how introducing exponential holding times immediately fixes the problem in either case. It might be possible to formulate some other reasonable statement in discrete time but it seems more difficult.

\begin{proof}We define $\mathcal M_0(\mathcal X)$ to be the Banach space consisting of all signed Borel measures $\gamma$ on $\mathcal X$ with $\gamma(\mathcal X)=0$, equipped with total variation norm. For a linear operator $T:\mathcal M_0(\mathcal X)\to \mathcal M_0(\mathcal X),$ define $\|T\|_{\text{op}}:=\sup_{\|\gamma\|_{TV}=1} \|T(\gamma)\|_{TV}$ to be the operator norm. Note that $\mathfrak P_t^*$ is a bounded linear operator from $\mathcal M_0(\mathcal X)\to \mathcal M_0(\mathcal X)$ with $\|\mathfrak P_t^*\|_{\text{op}} \leq 1.$

For all $\phi\in \mathcal X$ and $t>0$, we claim that $\frac1N\sum_{n=1}^N \mathfrak P_{nt}^*\delta_\phi$ converges weakly to $\rho.$ Indeed by compactness of $\mathcal X$, any subsequence is tight, hence has a further subsequence that converges weakly. Any such weak limit may be verified to be an invariant probability measure by directly applying $\mathfrak P_t^*-I$ and noting that the telescoping sums cancel. Hence any such weak limit must be equal to $\rho$. As every subsequence has a further subsequence converging to $\rho$, the original sequence must converge to $\rho$. 

From here it is clear that $\frac1N\sum_{n=1}^N \mathfrak P_{nt}^* \lambda$ converges weakly to $\lambda(\mathcal X)\rho$ for all nonnegative Borel measures $\lambda$ on $\mathcal X.$ Consequently for all $\gamma\in \mathcal M_0(\mathcal X)$, we have weak convergence of $N^{-1} \sum_{n=1}^N \mathfrak P_{nt}^*\gamma$ to the zero measure (use the Hahn-Jordan decomposition theorem to write $\gamma=\gamma_+-\gamma_-$ with $\gamma_+(\mathcal X) = \gamma_-(\mathcal X)$, and then apply $\frac1N\sum_{n=1}^N \mathfrak P_{nt}^*$ both components and take the limit).

We claim that $\mathfrak P_t^*$ is a compact operator on $\mathcal M_0(\mathcal X)$ for each $t> 0$. To prove this, first note that the sphere of radius 2 in $\mathcal M_0(\mathcal X)$ is compact with respect to the topology of weak convergence. This is true by Prohorov's theorem since each $\gamma\in\mathcal M_0(\mathcal X)$ with $\|\gamma\|_{TV}=2$ is a difference of probability measures, and because compactness of $\mathcal X$ ensures that any family of probability measures is automatically tight. The ultra Feller property guarantees that $\mathfrak P_t^*$ is continuous from the topology of weak convergence on probability measures to the topology of total variation convergence on probability measures, which proves compactness of the image under $\mathfrak P_t^*$ of the sphere of radius 2 in $\mathcal M_0(\mathcal X)$.

For this paragraph and the next, we will work with the complexification of $\mathfrak P_t^*$, which is an operator on the space $\mathcal M_0^\Bbb C(\mathcal X)$ of finite complex-valued Borel measures $\gamma$ on $\mathcal X$ with $\gamma(\mathcal X)=0$ (we will not distinguish the complexified operator from the real one). The norm on $\mathcal M_0^\Bbb C(\mathcal X)$ is defined by $\|\gamma\|_{TV}:=\sup\{|\gamma(f)| : f\in C(\mathcal X;\Bbb C)$ and $\sup|f|\leq 1\}$, where the bars now denote the complex modulus and $f$ can be a complex-valued function. Note that under this norm, the complexified operator still satisfies $\|\mathfrak P_t^*\|_{\text{op}}\leq 1$ with the same proof. The complexified operator is still compact since it can be viewed as the direct sum of two copies of the real operator. By the spectral theory of compact operators, the complex spectrum of $\mathfrak P_t^*$ consists of zero and some countable number of eigenvalues whose only accumulation point can be zero, and furthermore each of the eigenspaces must be finite dimensional. Since $\|\mathfrak P_t^*\|_{\text{op}}\leq 1$ all eigenvalues must have complex modulus less than or equal to 1. 

We claim that 1 cannot be an eigenvalue of $\mathfrak P_{t_0}^*$ for any $t_0> 0$. Indeed if $\mathfrak P_{t_0}^* \gamma = \gamma,$ then $\gamma = N^{-1}\sum_{n=1}^N \mathfrak P_{nt_0}^*\gamma$ for all $N$, and we already know that the latter converges weakly to the zero measure as $N\to\infty$ (that was in the case of a real-valued measure, but the complex case follows by applying the result individually to the real and imaginary parts of the measure). More generally we claim that $e^{i\theta}$ cannot be an eigenvalue of $\mathfrak P_{t_0}^*$ for any $\theta\in (0,2\pi]$ and any $t_0> 0$ (this is the step where we need a continuous time parameter). To obtain a contradiction, note that any eigenspace of $\mathfrak P_{t_0}$ is invariant under $\mathfrak P_t$ for all $t>0$. This is because $\mathfrak P_{t_0}$ and $\mathfrak P_t$ commute, so it follows that $\mathfrak P_t$ sends any eigenspace of $\mathfrak P_{t_0}$ to itself. Identify the $e^{i\theta}$-eigenspace of $\mathfrak P_{t_0}^*$ with $\Bbb C^{d}$, then we see that $\mathfrak P_{t}^*$ are linear maps from $\Bbb C^d\to \Bbb C^d$ that are continuous in $t\ge 0$ satisfying $\mathfrak P^*_{t+s}=\mathfrak P^*_t\mathfrak P^*_s$ for all $s,t\ge 0$ and $\mathfrak P^*_0=I_{d\times d}$, and $\mathfrak P_{t_0}^*=e^{i\theta} I_{d\times d}$. A $C_0$-semigroup on a finite-dimensional vector space has a generator, so we can write $\mathfrak P^*_t = e^{tA}$ for some complex $d\times d$ matrix $A$. Then $e^{(t_0A-i\theta I)}=I$, which implies that $t_0A-i\theta I$ must be conjugate to a diagonal matrix with diagonal entries $2\pi i k_1,...,2\pi i k_d$ for some $k_1,...,k_d \in \Bbb Z$ (by e.g. Jordan canonical form). In particular $A$ itself is conjugate to a diagonal matrix with entries $t_0^{-1}(2\pi k_1 +\theta)i,..., t_0^{-1}(2\pi k_d +\theta )i.$ In particular $\mathfrak P_t^*=e^{tA}$ has 1 as an eigenvalue whenever $t= 2\pi jt_0/(\theta + 2\pi k_1)$ for any $j\in\Bbb Z$ such that $2\pi jt_0/(\theta+2\pi k_1)>0$, which is a contradiction.

Consequently there are no eigenvalues of modulus 1, so the spectral radius of the compact operator $\mathfrak P_1^*$ is strictly less than 1. By Gelfand's spectral radius formula, this means that $\lim_{N\to\infty} \|\mathfrak P_N^*\|_{\text{op}}^{1/N}<1,$ or in other words $\|\mathfrak P_N^*\|_{\text{op}}\leq Ce^{-cN}$ for some $C,c>0$ independent of $N$. 
Since the invariant measure satisfies $\rho=\mathfrak P_N^*\rho$ for all $N$, and since $\delta_\phi-\rho\in\mathcal M_0(\mathcal X)$ for all $\phi\in\mathcal X$, this exponential bound on the operator norm then clearly gives $$\sup_{\phi\in\mathcal X}\big\| \mathfrak P_N^* \delta_\phi-\rho\big\|_{TV} =\sup_{\phi\in\mathcal X}\big\| \mathfrak P_N^* \big(\delta_\phi-\rho\big)\big\|_{TV}\leq \|\mathfrak P_N^*\|_{\text{op}}\cdot  \sup_{\phi\in\mathcal X} \|\delta_\phi-\rho\|_{TV}\leq Ce^{-cN}\cdot 2,$$ proving the claim.

For the special case of KPZ, Corollary \ref{SFP} shows that $\mathfrak P$ is strong Feller on $\mathcal X$. Moreover, Corollary \ref{mr2} clearly guarantees that unique ergodicity holds. Strong continuity of the semigroup follows from the Duhamel formula \eqref{mild} for the Hopf-Cole transform and noting that the stochastic part vanishes in the topology of $\mathcal X$ (for any initial condition) so that only the deterministic heat flow matters when taking the $t\to 0$ limit.
\end{proof}
We remark that $e^{i\theta}$ can indeed be an eigenvalue of $\mathfrak P_1^*$ on $\mathcal M_0(\mathcal X)$ in the discrete-time case: for example $e^{\pm 2\pi i/n}$ are eigenvalues in the case of the deterministic Markov chain on $\Bbb Z/n\Bbb Z$ where one always moves clockwise. We are uncertain if irrational multiples of $\pi$ are possible when the strong Feller property holds (this was the real obstruction to the formulation of a discrete time result).

\section{A coupling argument and proof of Theorem \ref{1f1s}}

Here we describe a different approach to proving uniqueness and geometric ergodicity of the stationary measure for open KPZ, which relies more heavily on the linear structure of the Hopf-Cole transform \eqref{mshe0}, and the interpretation through polymers.
A version of this method was implemented in \cite{GK20} for the periodic case. 
The methods described here still work to show uniqueness of the invariant measure in the case of driving noises that are colored in space, so long as they are still white in time (the only properties needed are negative moments of the solution and the convolution property).

\begin{defn}\label{cpk}
A continuous probability kernel (CPK) on $[0,1]$ is any function $\mathfrak p:[0,1]\times[0,1]\to \Bbb R$ that is continuous in both variables, strictly positive, and satisfies $\int_{[0,1]} \mathfrak p(x,y)dy=1$ for all $x\in [0,1]$.
\end{defn}

We remark that any two CPKs $\mathfrak p_1, \mathfrak p_2$ can be ``convolved'' or ``multiplied'' to form another CPK
$$(\mathfrak p_1\mathfrak p_2)(x,y):= \int_{[0,1]} \mathfrak p_1(x,z)\mathfrak p_2(z,y)dz.$$
This is an associative binary operation on CPKs, thus we may unambiguously write $\mathfrak p_1\cdots \mathfrak p_N.$ We have the following general result which essentially gives total variation bounds of exponential form for such products of CPKs. The main idea is that the Doeblin-type conditions implicit in the definition of the CPK can be bootstrapped to give nice bounds.

\begin{lem}\label{relaxation}
Let $\{\mathfrak p_n\}_{n\in\Bbb N}$ be any collection of CPKs, and define $\mathfrak q_N := \mathfrak p_1\cdots \mathfrak p_N$. Then for all $\delta>0$ one has $$\sup_{x,y\in[0,1]}\|\mathfrak q_N(x,\cdot)-\mathfrak q_N(y,\cdot)\|_{L^1[0,1]} \leq 2 (1-\delta)^{J_N(\delta)},$$ where $$J_N(\delta):= \#\{ k\leq N : \inf_{x,y\in [0,1]} \mathfrak p_k(x,y)>\delta \}.$$
\end{lem}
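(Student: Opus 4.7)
The plan is to use a classical Doeblin-type coupling, where each ``good'' kernel (one with pointwise lower bound exceeding $\delta$) provides an independent chance of coalescence of size $\delta$, and to control the $L^1$ distance of the densities by the probability of non-coalescence via the standard coupling inequality.

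First I would set up the Doeblin decomposition. Let $K:=\{k\leq N:\inf_{x,y\in[0,1]}\mathfrak p_k(x,y)>\delta\}$, so $|K|=J_N(\delta)$. For each $k\in K$, write
\begin{equation*}
\mathfrak p_k(x,y) = \delta + (1-\delta)\tilde{\mathfrak p}_k(x,y),\qquad \tilde{\mathfrak p}_k(x,y):=\frac{\mathfrak p_k(x,y)-\delta}{1-\delta}.
\end{equation*}
Since $\mathfrak p_k(x,y)>\delta$ pointwise and $\int_{[0,1]}\mathfrak p_k(x,y)\,dy=1$ (so $\delta<1$), $\tilde{\mathfrak p}_k$ is a non-negative probability kernel. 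For $k\notin K$ set $\tilde{\mathfrak p}_k:=\mathfrak p_k$. This produces the ``atom'' interpretation: at each step $k\in K$, with probability $\delta$ the kernel samples from Lebesgue measure on $[0,1]$, independently of the starting point.

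Next I would build the explicit coupling. Fix $x,y\in[0,1]$ and construct inhomogeneous Markov chains $(X_n)_{n=0}^N$ and $(Y_n)_{n=0}^N$ with $X_0=x$, $Y_0=y$, transitioning via $\mathfrak p_n$ from step $n-1$ to $n$, as follows. If $n\notin K$, run the two chains independently according to $\mathfrak p_n$. If $n\in K$, draw an independent Bernoulli random variable $B_n$ of parameter $\delta$; if $B_n=1$, sample a single $Z_n$ from Lebesgue measure on $[0,1]$ and set $X_n=Y_n=Z_n$ (this is the coalescence event); if $B_n=0$, sample $X_n$ from $\tilde{\mathfrak p}_n(X_{n-1},\cdot)$ and $Y_n$ from $\tilde{\mathfrak p}_n(Y_{n-1},\cdot)$ independently. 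Once a coalescence occurs, continue to evolve both chains jointly through a single sample from $\mathfrak p_j$ at every subsequent step $j$, so that $X_m=Y_m$ for all $m$ beyond the first coalescence time. By construction each marginal chain evolves correctly under the kernels $\mathfrak p_k$, so $X_N\sim \mathfrak q_N(x,\cdot)$ and $Y_N\sim \mathfrak q_N(y,\cdot)$.

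The final step is the bound. The event $\{X_N\neq Y_N\}$ is contained in the event that no coalescence has happened, i.e.\ $\{B_k=0\text{ for all }k\in K\}$. Since the $B_k$ for $k\in K$ are independent Bernoulli$(\delta)$, this has probability $(1-\delta)^{J_N(\delta)}$. By the standard coupling inequality for total variation,
\begin{equation*}
\tfrac12\|\mathfrak q_N(x,\cdot)-\mathfrak q_N(y,\cdot)\|_{L^1[0,1]} = d_{TV}\big(\mathfrak q_N(x,\cdot),\mathfrak q_N(y,\cdot)\big)\leq \Pr(X_N\neq Y_N)\leq (1-\delta)^{J_N(\delta)},
\end{equation*}
which yields the claimed bound after multiplying by $2$ and taking the supremum over $x,y\in[0,1]$. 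There is no serious obstacle: the only subtlety is verifying that the Doeblin decomposition yields a \emph{bona fide} probability kernel $\tilde{\mathfrak p}_k$ (which requires $\delta<1$, automatic from $\mathfrak p_k$ integrating to $1$ and being strictly positive) and checking that the joint law constructed above has the correct marginals, both of which are routine.
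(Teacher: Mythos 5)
Your proof is correct and follows essentially the same route as the paper: the identical Doeblin decomposition $\mathfrak p_k=\delta+(1-\delta)\tilde{\mathfrak p}_k$ on the ``good'' steps, the same coin-flip coupling with a uniform coalescence draw, and the standard coupling inequality combined with the fact that total variation equals the $L^1$ distance for absolutely continuous kernels. No gaps worth noting.
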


\begin{proof}
Let $A:=\{ k\in\Bbb N : \inf_{x,y\in [0,1]} \mathfrak p_k(x,y)>\delta \}. $ For $n\in A$ we define $$\tilde p_n(x,y):= \frac{\mathfrak p_n(x,y)-\delta}{1-\delta}.$$

Next we will construct a Markov chain $(S_n,T_n)_{n\ge 0}$ on $[0,1]^2$ whose marginals at time $N$ have law $\mathfrak q_N(x,\cdot)$ and $\mathfrak q_N(y,\cdot)$. Define $S_0=x$ and $T_0=y$. Inductively assuming $(S_k,T_k)_{k\leq n}$ have already been defined, we choose $(S_{n+1},T_{n+1})$ according to four different cases as follows:
\begin{itemize}
\item if $S_n=T_n$ then sample $S_{n+1}=T_{n+1}$ according to $\mathfrak p_n(S_n,\cdot).$ 
\item if $n\notin A$ and $S_n\neq T_n$ then independently sample $S_{n+1}$ and $T_{n+1}$ from $\mathfrak p_n(S_n,\cdot)$ and $\mathfrak p_n(T_n,\cdot)$ respectively.
\item if $n\in A$ and $S_n\neq T_n$, flip an independent coin that is heads with probability $\delta$:

\subitem $\bullet$ if heads, then sample $S_{n+1}=T_{n+1}$ \textit{uniformly} at random from $[0,1]$. 
\subitem $\bullet$ if tails, then independently sample $S_{n+1}$ and $T_{n+1}$ according to $\tilde p_n(S_n,\cdot)$ and $\tilde p_n(T_n,\cdot)$ respectively.
\end{itemize}
One may check that both $S_n$ and $T_n$ are Markov chains whose transition kernel at step $n$ is precisely $\mathfrak p_n$, and consequently $S_n$ has law $\mathfrak q_N(x,\cdot)$ and $T_n$ has law $\mathfrak q_N(y,\cdot)$. Furthermore the trajectories of $S,T$ coincide as soon as the first heads appears. The probability of exceeding some deterministic time $N$ before reaching a heads is equal to $(1-\delta)^{J_N}.$ 

From this coupling, we have shown that $$\sup_{x,y}\|\mathfrak q_N(x,\cdot)-\mathfrak q_N(y,\cdot)\|_{TV}\leq 2(1-\delta)^{J_N}.$$ For absolutely continuous measures, the total variation is just the $L^1$ norm, i.e., $$\|\mathfrak q_N(x,\cdot)-\mathfrak q_N(y,\cdot)\|_{TV} = \|\mathfrak q_N(x,\cdot)-\mathfrak q_N(y,\cdot)\|_{L^1[0,1]}.$$
\end{proof}

As an interesting corollary of the previous lemma, note that if $\mathfrak p_n$ are \textit{random} CPKs which are chosen IID from some probability distribution on the space of CPKs, then clearly $J_N(\delta)$ is distributed as Binomial$(N,\epsilon)$ for some choice of $\delta,\epsilon>0$ and therefore the above lemma shows the exponential convergence $$\Bbb E[\sup_{x,y} \|\mathfrak q_N(x,\cdot)-\mathfrak q_N(y,\cdot)\|_{L^1}] \leq Ce^{-cN}$$ for some constants $c,C$ depending on the distribution of $\mathfrak p_1$ but not $N$. Then using Borel-Cantelli and Markov's inequality, this expectation bound also implies almost-sure statements such as $$\sup_{x,y} \|\mathfrak q_N(x,\cdot)-\mathfrak q_N(y,\cdot)\|_{L^1}\to 0,\;\;\;\;\;a.s..$$ This will be the main idea going forward, however the IID case is very simple and we will actually need to apply the theorem in a more complicated situation.

\begin{thm}[One-force one-solution principle]\label{1f1s2}
Fix a space-time white noise on $(-\infty,0]\times[0,1]$ and let $z_N^\mu,z_N^\nu: [-N,0]\times [0,1]\to \Bbb R$ denote the solutions to open KPZ driven by $\xi$ started from any deterministic nonnegative Borel measures $\mu,\nu$ at time $t=-N$. Then 
there exist $C,c>0$ such that for all $N \ge 1$ one has
$$\Bbb E \bigg[ \sup_{[\mu],[\nu]\in\mathcal X} d_\mathcal X\big([z_N^\mu(0,\cdot)],[z_N^\nu(0,\cdot)]\big)\bigg] \leq Ce^{-cN}.$$ 
Here $\mathcal X$ is the space defined in \eqref{xspace}.
\end{thm}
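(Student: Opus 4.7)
The plan is to reduce the problem to bounding a product of continuous probability kernels via the convolution property and a Doob-type $h$-transform, and then invoke Lemma~\ref{relaxation}. First, by the convolution property (Proposition~\ref{convo}) and strict positivity of the propagators (Proposition~\ref{nega}),
$$z_N^\mu(0,y) \;=\; \int_{[0,1]} z_{-N,0}(x,y)\,\mu(dx),$$
so the $\mathcal X$-class of $z_N^\mu(0,\cdot)$ is represented by the normalized density
$$\pi^\mu(y) \;=\; \int_{[0,1]} \mathfrak p^x(y)\,\tilde\mu(dx), \qquad \mathfrak p^x(y):=\frac{z_{-N,0}(x,y)}{\int_{[0,1]}z_{-N,0}(x,y')\,dy'},$$
where $\tilde\mu$ is the probability measure $\tilde\mu(dx)\propto\mu(dx)\int z_{-N,0}(x,y')\,dy'$. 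Since $d_\mathcal X\leq\|\cdot\|_{TV}$ and $\pi^\mu$ is a convex $\tilde\mu$-average of the $\mathfrak p^x$'s, one obtains
$$\sup_{[\mu],[\nu]\in\mathcal X} d_\mathcal X\big([z_N^\mu(0,\cdot)],[z_N^\nu(0,\cdot)]\big) \;\leq\; \sup_{x,x'\in[0,1]}\|\mathfrak p^x-\mathfrak p^{x'}\|_{TV}.$$

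Next, I would decompose $\mathfrak p^x$ as a product of CPKs in the sense of Definition~\ref{cpk}, via a Doob-type $h$-transform. Set $F_k(x):=\int_{[0,1]} z_{-k,0}(x,y)\,dy$ with $F_0\equiv 1$, which is a.s.\ positive and continuous by Proposition~\ref{nega}, and define
$$\mathfrak p_k(x,y) \;:=\; \frac{z_{-k,-k+1}(x,y)\,F_{k-1}(y)}{F_k(x)}, \qquad k=1,\ldots,N.$$
The identity $F_k(x)=\int z_{-k,-k+1}(x,y)F_{k-1}(y)\,dy$ (a consequence of Proposition~\ref{convo}) shows that each $\mathfrak p_k$ is a CPK, and an easy induction, again using Proposition~\ref{convo}, yields the product identity $\mathfrak p^x(\cdot)=(\mathfrak p_N\mathfrak p_{N-1}\cdots\mathfrak p_1)(x,\cdot)$. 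Applying Lemma~\ref{relaxation} then gives, for any fixed $\delta>0$,
$$\sup_{x,x'}\|\mathfrak p^x-\mathfrak p^{x'}\|_{TV} \;\leq\; 2(1-\delta)^{J_N(\delta)}, \qquad J_N(\delta):=\#\{k\leq N:\inf_{x,y}\mathfrak p_k(x,y)>\delta\}.$$

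The final step is to show $\Bbb E[(1-\delta)^{J_N(\delta)}]\leq Ce^{-cN}$ for appropriate $\delta>0$. The pointwise factorization
$$\inf_{x,y}\mathfrak p_k(x,y) \;\geq\; s_k\cdot r_k, \qquad s_k:=\frac{\inf_{x,y}z_{-k,-k+1}(x,y)}{\sup_{x,y}z_{-k,-k+1}(x,y)}, \;\; r_k:=\frac{\inf_y F_{k-1}(y)}{\sup_y F_{k-1}(y)}$$
separates the fresh-noise contribution (via the IID family $\{s_k\}$, which by Proposition~\ref{nega} satisfies $\Bbb P(s_k>\eta)>0$ for any small $\eta>0$) from the dependence on past noise (via $r_k$). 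I would then partition $[-N,0]$ into blocks of fixed length $L$ and combine the IID structure of the in-block noise with the negative-moment bounds of Proposition~\ref{nega} (which give integrable tails for $\log r_k^{-1}$) to show that within each block some index $k$ satisfies $s_k r_k\geq\delta$ with probability at least some universal $p_0>0$ outside a set of summable exceptional probability. A Chernoff estimate for the resulting sum of IID-dominated block indicators then yields $\Bbb P(J_N(\delta)<c'N)\leq Ce^{-cN}$ and hence the desired expectation bound.

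The main technical obstacle is this last step: the $r_k$'s depend on past noise and can be small when $F_{k-1}$ is very peaked in $y$. The role of the block decomposition is precisely to absorb this dependence into integrable tail factors controlled by Proposition~\ref{nega}; making the quantitative lower bound on the in-block conditional good-event probability rigorous is the technical heart of the argument, in parallel with \cite[Section 4]{GK20} in the periodic case.
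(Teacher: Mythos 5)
Your first two steps are correct, and in fact the convexity reduction is a genuine simplification of the paper's argument: since $z_N^\mu(0,\cdot)=\int z_{-N,0}(x,\cdot)\mu(dx)$, the normalized profile is a mixture of the normalized Dirac profiles $\mathfrak p^x$, so $\sup_{[\mu],[\nu]}d_\mathcal X\leq\sup_{x,x'}\|\mathfrak p^x-\mathfrak p^{x'}\|_{TV}$ and the uniformity over all initial measures comes for free. (The paper instead keeps a general terminal weight $f$ in the kernels, deduces a comparison of $z^f_{-N,0}/z^g_{-N,0}$ on the classes $\mathscr C_M=\{\|f/g\|_\infty+\|g/f\|_\infty<M\}$, and then removes the restriction to $\mathscr C_M$ by conditioning on the first unit time step and summing over $M$ with the moment bounds of Proposition \ref{nega}; your mixture bound bypasses all of that.) Your Doob-transform factorization $\mathfrak p^x=(\mathfrak p_N\cdots\mathfrak p_1)(x,\cdot)$ with $F_k(x)=\int z_{-k,0}(x,y)dy$ is exactly the paper's kernel $\mathfrak p^f_n$ specialized to $f\equiv 1$, and the application of Lemma \ref{relaxation} is fine.

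The gap is in the final step, and it is precisely the step that carries the probabilistic content. Your bound $\inf_{x,y}\mathfrak p_k(x,y)\geq s_k r_k$ is correct, but $r_k=\inf_yF_{k-1}(y)/\sup_yF_{k-1}(y)$ is a functional of the solution over the whole horizon $[-k+1,0]$, whose length grows with $k$: Proposition \ref{nega} is stated on a fixed window $[\epsilon,T]$ with constants depending on $T$, so it does \emph{not} give tail bounds for $\log r_k^{-1}$ that are uniform in $k$, and moreover your block indicators are not independent (each $r_k$ depends on the noise of all blocks closer to time $0$), so the proposed Chernoff estimate for ``IID-dominated block indicators'' is not justified as written. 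The missing idea is to apply the convolution property (Proposition \ref{convo}) once more inside the ratio: writing $F_{k-1}(y)=\int_{[0,1]}z_{-k+1,-k+2}(y,u)F_{k-2}(u)\,du$, the far-future factor $F_{k-2}$ cancels between numerator and denominator, giving
\begin{equation*}
r_k\;\geq\;\frac{\inf_{v,u}z_{-k+1,-k+2}(v,u)}{\sup_{v,u}z_{-k+1,-k+2}(v,u)}\;=\;s_{k-1},
\end{equation*}
so that $\inf_{x,y}\mathfrak p_k(x,y)\geq s_k s_{k-1}$ is measurable with respect to the noise on the two-unit window $[-k,-k+2]$. This is exactly the cancellation behind the paper's events $\Omega_n$: the good events along, say, even $k$ are then IID with positive probability (positivity of that probability is where Proposition \ref{nega} enters, on a \emph{unit} window), $J_N(\delta)$ stochastically dominates a Binomial$(\lfloor N/2\rfloor,\epsilon)$ variable, and $\Bbb E[(1-\delta)^{J_N(\delta)}]\leq Ce^{-cN}$ follows. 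With this observation your block decomposition becomes unnecessary; without it, your last step does not go through as described.
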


\begin{proof}Fix a realization of $\xi$ on $(-\infty,0]\times [0,1]$, and let $z_{s,t}(x,y)$ denote the propagators of the stochastic heat equation for $s<t\leq 0$ and $x,y\in[0,1]$. Fix some positive continuous deterministic $f:[0,1]\to\Bbb R_+.$ Then define for $n\ge 0$ $$\mathfrak p_{n}^f(x,y):= \frac{z_{-n-1,-n}(x,y)\int_{[0,1]} z_{-n,0}(y,a)f(a)da}{\int_{[0,1]} z_{-n-1,0}(x,a)f(a)da}, $$ where $z_{0,0}$ is understood to be $\delta_0$ so the integral in the numerator for $n=0$ is understood to be $f(y)$. Note by Proposition \ref{convo} that $\mathfrak p_{n}^f$ is indeed a CPK. Then for $N\ge 1$ let $\mathfrak q_N^f:=\mathfrak p_{N-1}^f\cdots \mathfrak p_{0}^f,$ so that $$\mathfrak q_N^f(x,y) = \frac{z_{-N,0}(x,y)f(y)}{\int_{[0,1]}z_{-N,0}(x,a)f(a)da}.$$
For $\delta>0$ and $n\ge 0$ we define $\Omega_n$ to be the event 
\begin{align*}\label{event}\big\{\inf_{u,x,y\in[0,1]} &z_{-n-1,-n}(x,y)\wedge z_{-n,1-n}(y,u) \wedge z_{-n-1,1-n}(x,u) >\delta \big\}\\&\cap \big\{\sup_{u,x,y\in[0,1]} z_{-n-1,-n}(x,y)\vee z_{-n,1-n}(y,u) \vee z_{-n-1,1-n}(x,u) <\delta^{-1}\big\}
\end{align*}
From now onward, fix some $\delta>0$ (small enough) so that the event $\Omega_n$ has strictly positive probability $\epsilon>0$ (note that $\epsilon$ does not depend on $n$ by temporal stationarity of the noise $\xi$). Note foremost that $\Omega_n$ is measurable with respect to the noise $\xi$ restricted to $[-n-2,-n]\times[0,1]$. Consequently $\{\Omega_n\}_{n\in2\Bbb N}$ are independent events. Furthermore on $\Omega_n$ we have the bound 
\begin{align*}
    \mathfrak p_{n}^f(x,y) &= \frac{z_{-n-1,-n}(x,y)\int_{[0,1]}z_{-n,0}(y,a)f(a)da}{\int_{[0,1]}z_{-n-1,0}(x,a)da} \\ &> \frac{\delta\int_{[0,1]} z_{-n,0}(x,a)da}{\int_{[0,1]}z_{-n-1,0}(y,a)f(a)da} \\ &= \frac{\delta \int_{[0,1]}\big(\int_{[0,1]} z_{-n,1-n}(y,u)z_{1-n,0}(u,a)du\big)f(a)da}{\int_{[0,1]}\big(\int_{[0,1]} z_{-n-1,1-n}(x,u)z_{1-n,0}(u,a)du\big)f(a)da} \\ &=\frac{\delta \int_{[0,1]} z_{-n,1-n}(y,u)\big(\int_{[0,1]}z_{1-n,0}(u,a)f(a)da\big)du}{\int_{[0,1]} z_{-n-1,1-n}(x,u)\big(\int_{[0,1]}z_{1-n,0}(u,a)f(a)da\big)du}\\  &>\frac{\delta \int_{[0,1]} \delta\big(\int_{[0,1]}z_{1-n,0}(u,a)f(a)da\big)du}{\int_{[0,1]} \delta^{-1}\big(\int_{[0,1]}z_{1-n,0}(u,a)f(a)da\big)du} \\ &=\delta^3.
\end{align*}
Note that this bound holds \textit{uniformly} over all $x,y\in [0,1]$ and $f\in C_+[0,1]$. In the notation of Lemma \ref{relaxation} this means that $$J_{N}(\delta^3) \geq \sum_{n\leq N/2} 1_{\Omega_{2n}}. $$ Therefore by Lemma \ref{relaxation} we find that $$\sup_{\substack{x,\bar x\in [0,1]\\ f\in C_+[0,1]}}\|\mathfrak q_N^f(x,\cdot)-\mathfrak q_N^f(\bar x,\cdot)\|_{L^1[0,1]}<(1-\delta^3)^{J_N(\delta^3)}\leq (1-\delta^3)^{\sum_{n\leq \lfloor N/2\rfloor} 1_{\Omega_{2n}}}.$$ But the $\Omega_{2n}$ are independent, so the random variable $\sum_{n\leq \lfloor N/2\rfloor} 1_{\Omega_n}$ is distributed as Binomial$(\lfloor N/2 \rfloor,\epsilon).$ Thus there exist constants $C,c>0$ independent of $N$ such that $$\Bbb E[ (1-\delta^3)^{\sum_{n\leq \lfloor N/2\rfloor} 1_{\Omega_{2n}}}]<Ce^{-cN}.$$ 
So far this shows that $$\Bbb E\bigg[ \sup_{\substack{x,\bar x\in [0,1]\\ f\in C_+[0,1]}} \int_{[0,1]}\bigg| \frac{z_{-N,0}(x,y)}{\int_{[0,1]}z_{-N,0}(x,a)f(a)da}-\frac{z_{-N,0}(\bar x,y)}{\int_{[0,1]}z_{-N,0}(\bar x,a)f(a)da}\bigg|f(y)dy \bigg] < Ce^{-cN}.$$
Now notice that for any $f,g\in C_+[0,1]$ we can write $g=(g/f)\cdot f$ so that
\begin{align*}
    \sup_{x,\bar x\in [0,1]} \int_{[0,1]}&\bigg| \frac{z_{-N,0}(x,y)}{\int_{[0,1]}z_{-N,0}(x,a)f(a)da}-\frac{z_{-N,0}(\bar x,y)}{\int_{[0,1]}z_{-N,0}(\bar x,a)f(a)da}\bigg|g(y)dy \\ &\leq \|g/f\|_{C[0,1]}\sup_{x,\bar x\in [0,1]}\int_{[0,1]}\bigg| \frac{z_{-N,0}(x,y)}{\int_{[0,1]}z_{-N,0}(x,a)f(a)da}-\frac{z_{-N,0}(\bar x,y)}{\int_{[0,1]}z_{-N,0}(\bar x,a)f(a)da}\bigg|f(y)dy.
\end{align*}
This will be the key estimate going forward, and from now onwards it is mostly a matter of elementary algebraic manipulations. For $M>0$ (large) define $$\mathscr C_M:=\{(f,g)\in C_+[0,1]\times C_+[0,1] : \|g/f\|_{C[0,1]}+\|f/g\|_{C[0,1]}<M\}.$$ 
Then, because of the factor $\|g/f\|_{C[0,1]}$ appearing in the bound above, we see that $$\Bbb E\bigg [\sup_{\substack{x,\bar x\in [0,1]\\ (f,g)\in \mathscr C_M}} \int_{[0,1]}\bigg| \frac{z_{-N,0}(x,y)}{\int_{[0,1]}z_{-N,0}(x,a)f(a)da}-\frac{z_{-N,0}(\bar x,y)}{\int_{[0,1]}z_{-N,0}(\bar x,a)f(a)da}\bigg|g(y)dy\bigg] \leq CMe^{-cN}.$$
By a time reversal of the noise we have for each $N$ the distributional equality $(z_{-N,0}(x,y))_{x,y\in [0,1]} \stackrel{d}{=} (z_{-N,0}(y,x))_{x,y\in [0,1]}$, so the preceding expectation bound can be rewritten as $$\Bbb E\bigg [\sup_{\substack{x,\bar x\in [0,1]\\ (f,g)\in \mathscr C_M}} \int_{[0,1]}\bigg| \frac{z_{-N,0}(y,x)}{\int_{[0,1]}z_{-N,0}(a,x)f(a)da}-\frac{z_{-N,0}(y,\bar x)}{\int_{[0,1]}z_{-N,0}( a,\bar x)f(a)da}\bigg|g(y)dy\bigg] \leq CMe^{-cN}.$$
Let $z_{-N,0}^f(x):= \int_{[0,1]} z_{-N,0}(a,x)f(a)da$ which is the solution at time 0 of the stochastic heat equation started from initial condition $f$ at time $-N$. Notice that 
\begin{align*}
    \bigg|\frac{z_{-N,0}^g(x)}{z_{-N,0}^f(x)}-\frac{z_{-N,0}^g(\bar x)}{z_{-N,0}^f(\bar x)}\bigg| & = \bigg| \int_{[0,1]}\bigg( \frac{z_{-N,0}(y,x)}{\int_{[0,1]}z_{-N,0}(a,x)f(a)da}-\frac{z_{-N,0}(y,\bar x)}{\int_{[0,1]}z_{-N,0}( a,\bar x)f(a)da}\bigg)g(y)dy\bigg|\\&\leq \int_{[0,1]}\bigg| \frac{z_{-N,0}(y,x)}{\int_{[0,1]}z_{-N,0}(a,x)f(a)da}-\frac{z_{-N,0}(y,\bar x)}{\int_{[0,1]}z_{-N,0}( a,\bar x)f(a)da}\bigg|g(y)dy,
\end{align*}therefore we see from the previous bound that $$\Bbb E \bigg[ \sup_{\substack{x,\bar x\in [0,1]\\ (f,g)\in \mathscr C_M}} \bigg|\frac{z_{-N,0}^g(x)}{z_{-N,0}^f(x)}-\frac{z_{-N,0}^g(\bar x)}{z_{-N,0}^f(\bar x)}\bigg| \bigg] \leq C M e^{-cN}.$$ Setting $\bar x=0$ then implies that 
$$\Bbb E\bigg[ \sup_{(f,g)\in \mathscr C_M}\bigg\| \frac{z^g_{-N,0}}{z^f_{-N,0}} - \frac{z^g_{-N,0}(0)}{z^f_{-N,0}(0)}\bigg\|_{C[0,1]}\bigg] \leq CMe^{-cN}.$$
Next notice that 
$$\bigg| \frac{\int z^g_{-N,0}}{\int z^f_{-N,0}}-\frac{z^g_{-N,0}(0)}{z^f_{-N,0}(0)} \bigg| = \frac{\big| \int z^f_{-N,0} \big(\frac{z^g_{-N,0}}{z^f_{-N,0}} - \frac{z^g_{-N,0}(0)}{z^f_{-N,0}(0)}\big)\big| }{\int z^f_{-N,0}} \leq \bigg\| \frac{z^g_{-N,0}}{z^f_{-N,0}} - \frac{z^g_{-N,0}(0)}{z^f_{-N,0}(0)}\bigg\|_{C[0,1]}.$$
The last two convergences imply \begin{equation}\label{a1}\Bbb E \bigg[ \sup_{(f,g)\in \mathscr C_M} \bigg\| \frac{z^g_{-N,0}}{z^f_{-N,0}} - \frac{\int z^g_{-N,0}}{\int z^f_{-N,0}}\bigg\|_{C[0,1]}\bigg] \leq CMe^{-cN}.
\end{equation}
Note that $f$ and $g$ are symmetric in the definition of $\mathscr C_M$, and therefore we also have 
\begin{equation}\label{a2}\Bbb E \bigg[ \sup_{(f,g)\in \mathscr C_M}\bigg\| \frac{z^f_{-N,0}}{z^g_{-N,0}} - \frac{\int z^f_{-N,0}}{\int z^g_{-N,0}}\bigg\|_{C[0,1]}\bigg] \leq CMe^{-cN}.
\end{equation}
For all bounded measurable functions $\varphi:[0,1]\to\Bbb R$ one has that 
\begin{align*}\bigg| \frac{\int z^g_{-N,0}\varphi}{\int z^g_{-N,0}}-\frac{\int z^f_{-N,0}\varphi}{\int z^f_{-N,0}} \bigg| 
&= \bigg|\frac{ \int z^f_{-N,0} \varphi \big(\frac{z^g_{-N,0}}{z^f_{-N,0}} - \frac{\int z^g_{-N,0}}{\int z^f_{-N,0}}\big) }{\int z^g_{-N,0}} \bigg|
\\ &\leq \|\varphi\|_{L^\infty[0,1]}\frac{\int z_{-N,0}^f}{\int z^g_{-N,0}}\bigg\| \frac{z^g_{-N,0}}{z^f_{-N,0}} - \frac{\int z^g_{-N,0}}{\int z^f_{-N,0}}\bigg\|_{C[0,1]}.
\end{align*}
The left side is symmetric in $f$ and $g$, thus we also have 
$$\bigg| \frac{\int z^g_{-N,0}\varphi}{\int z^g_{-N,0}}-\frac{\int z^f_{-N,0}\varphi}{\int z^f_{-N,0}} \bigg|  \leq \|\varphi\|_{L^\infty[0,1]}\frac{\int z_{-N,0}^g}{\int z^f_{-N,0}}\bigg\| \frac{z^f_{-N,0}}{z^g_{-N,0}} - \frac{\int z^f_{-N,0}}{\int z^g_{-N,0}}\bigg\|_{C[0,1]}.$$
Note that $\min\big\{\frac{\int z_{-N,0}^g}{\int z^f_{-N,0}},\frac{\int z_{-N,0}^f}{\int z^g_{-N,0}}\big\}\leq 1 $, and therefore the last two expressions combined give 
$$\bigg| \frac{\int z^g_{-N,0}\varphi}{\int z^g_{-N,0}}-\frac{\int z^f_{-N,0}\varphi}{\int z^f_{-N,0}} \bigg|  \leq \|\varphi\|_{L^\infty[0,1]} \max \bigg\{\bigg\| \frac{z^g_{-N,0}}{z^f_{-N,0}} - \frac{ \int z^g_{-N,0}}{ \int z^f_{-N,0}}\bigg\|_{C[0,1]},\bigg\|\frac{ z_{-N,0}^f}{ z^g_{-N,0}} - \frac{\int z^f_{-N,0}}{\int z^g_{-N,0}}\bigg\|_{C[0,1]} \bigg\}. $$
Combining this with \eqref{a1} and \eqref{a2}, and taking a sup over $\|\varphi\|_{L^\infty}\leq 1$, gives us that $$\Bbb E \bigg[ \sup_{\substack{(f,g)\in \mathscr C_M\\ \|\varphi\|_{L^\infty[0,1]}\leq 1}} \bigg|\frac{\int z^g_{-N,0}\varphi}{\int z^g_{-N,0}}-\frac{\int z^f_{-N,0}\varphi}{\int z^f_{-N,0}}\bigg| \bigg] \leq CMe^{-cN}, $$ or in other words 
\begin{equation}\label{t2}\Bbb E \bigg[ \sup_{(f,g)\in \mathscr C_M} \bigg\| \frac{z^g_{-N,0}}{\int z^g_{-N,0}} - \frac{z^f_{-N,0}}{\int z^f_{-N,0}}\bigg\|_{L^1[0,1]}\bigg] \leq CMe^{-cN}.\end{equation}
Since the $L^1$ bounded below by that of $\mathcal X$ (recall we assumed in \eqref{xspace} that $d_{\text{Proh}}(\mu,\nu)\leq \|\mu-\nu\|_{TV}$), this actually implies
\begin{equation}\label{t1}
    \Bbb E \bigg[ \sup_{(f,g)\in \mathscr C_M} d_\mathcal X\big( [z^g_{-N,0}],[z^f_{-N,0}]\big)\bigg] \leq CMe^{-cN}.
\end{equation}

Now we are in a position to finally prove the theorem. Let $\mathcal M_1[0,1]$ denote the set of probability measures on $[0,1]$. Let $N$ be fixed henceforth (but note that all constants are still independent of $N$). Let $z_{-N,-n}^\mu = \int_{[0,1]} z_{-N,-n}(x,\cdot) \mu(dx) \in C_+[0,1]$ denote the solution at time $-n$ of the stochastic heat equation started from $\mu$ at initial time $-N$. For $M>0$ let $\Theta_M$ be the event $\{ (z_{-N,1-N}^\mu, z_{-N,1-N}^{\nu}) \in \mathscr C_M$ for all $\mu,\nu \in \mathcal M_1[0,1]\}$, and we let $\Gamma_M:=\Theta_M \backslash \Theta_{M-1}$. Notice by the Markov property of the stochastic heat equation (looking at the solution from time $N-1$ onwards) that 
$$1_{\Gamma_M}\cdot \sup_{[\mu],[\nu]\in\mathcal X} d_\mathcal X\big( [z^\mu_{-N,0}],[z^\nu_{-N,0}]\big) \leq 1_{\Gamma_M}\cdot \sup_{(f,g)\in\mathscr C_M} d_\mathcal X\big( [z^f_{1-N,0}],[z^g_{1-N,0}]\big).$$
Note that for all $M$ the random variable $1_{\Gamma_M}$ is measurable with respect to the noise on $[-N,1-N]$, whereas the random variable $\sup_{(f,g)\in\mathscr C_M} d_\mathcal X\big( [z^f_{1-N,0}],[z^g_{1-N,0}]\big)$ is measurable with respect to the noise on $[1-N,0]$, therefore the two are independent and so 
\begin{align}\notag\Bbb E \bigg[ 1_{\Gamma_M}\cdot \sup_{[\mu],[\nu]\in\mathcal X} d_\mathcal X\big( [z^\mu_{-N,0}],[z^\nu_{-N,0}]\big)\bigg] &\leq  \Bbb P(\Gamma_M) \Bbb E \bigg[ \sup_{(f,g)\in \mathscr C_M} d_\mathcal X\big( [z^g_{1-N,0}],[z^f_{1-N,0}]\big)\bigg]\\&\leq \Bbb P(\Gamma_M)\cdot CMe^{-c(N-1)},\label{ref1}
\end{align}
where we applied \eqref{t1}. Now we let $\Gamma_{\infty}$ be the complement of $\bigcup_M\Gamma_M = \bigcup_M \Theta_M$ and we obtain that
\begin{align*}\Bbb E\bigg [ \sup_{[\mu],[\nu]\in\mathcal X} d_\mathcal X\big( [z^\mu_{-N,0}],[z^\nu_{-N,0}]\big) \bigg] &= \Bbb E \bigg [ 1_{\bigcup_{M\in \Bbb N\cup\{\infty\}} \Gamma_M}\cdot  \sup_{[\mu],[\nu]\in\mathcal X} d_\mathcal X\big( [z^\mu_{-N,0}],[z^\nu_{-N,0}]\big)\bigg] \\ &\leq \sum_{M\in\Bbb N\cup\{\infty\}} \Bbb E\bigg[1_{ \Gamma_M}\cdot  \sup_{[\mu],[\nu]\in\mathcal X} d_\mathcal X\big( [z^\mu_{-N,0}],[z^\nu_{-N,0}]\big)\bigg]\\ &\stackrel{\eqref{ref1}}{\leq} Ce^{-cN} \sum_{M\in\Bbb N\cup\{\infty\}} M\Bbb P(\Gamma_M) \\ &\leq Ce^{-cN} \sum_{M\in\Bbb N} M\Bbb P(\Theta_{M-1}^c),
\end{align*}
where $\Theta_M^c$ denotes the complement. Therefore we just need to estimate the sum over $M$ (and in particular show that it is finite). To do that, we see that $$\sup_{\substack{ \mu\in \mathcal M_1[0,1]\\y\in[0,1]}} z^\mu_{-N,1-N}(y) =\sup_{\substack{ \mu\in \mathcal M_1[0,1]\\y\in[0,1]}} \int_{[0,1]} z_{-N,1-N}(x,y)\mu(dx) \leq \sup_{x,y\in[0,1]} z_{-N,1-N}(x,y),$$
$$\inf_{\substack{ \mu\in \mathcal M_1[0,1]\\x\in[0,1]}} z^\mu_{-N,1-N}(y) =\inf_{\substack{ \mu\in \mathcal M_1[0,1]\\y\in[0,1]}} \int_{[0,1]} z_{-N,1-N}(x,y)\mu(dx) \geq \inf_{x,y\in[0,1]} z_{-N,1-N}(x,y),$$ and therefore $$\Theta_M \subset \{ M^{-1/2} \leq \inf_{x,y\in[0,1]} z_{-N,1-N}(x,y) \leq \sup_{x,y\in[0,1]} z_{-N,1-N}(x,y) \leq M^{1/2}\}.$$
Therefore by taking complements and applying Markov's inequality with a sixth moment bound, we find that 
\begin{align*}
    \Bbb P(\Theta_M^c) \leq \frac{\Bbb E [\sup_{x,y\in[0,1]}z_{0,1}(x,y)^6]}{M^3}+\frac{\Bbb E\big[\big(\inf_{x,y\in[0,1]}z_{0,1}(x,y)\big)^{-6}\big]}{M^3}.
\end{align*}
where we used the fact that $z_{-N,1-N} \stackrel{d}{=}z_{0,1}.$ That the above moments are finite follows from Proposition \ref{nega}. This is enough to prove convergence of the above sum over $M$, proving the theorem.
\end{proof}

\begin{rk}
Define $\bar {\mathcal X}$ to be the space consisting of all finite nonnegative $f\in L^1[0,1]$, modulo the relation $f\sim cf$ for $c>0$, and equipped with the complete metric $$d_{\bar{\mathcal{ X}}} \big( [f],[g]\big):= \bigg\|\frac{f}{\|f\|_{L^1}}-\frac{g}{\|g\|_{L^1}}\bigg\|_{L^1}.$$ Note by \eqref{t2} that in \eqref{t1} we may replace $d_\mathcal X$ by $d_{\bar{\mathcal X}}$ and the statement is still true. Therefore the remainder of the proof still goes through with $\mathcal X$ replaced by $\bar {\mathcal X}$, which then gives that $$\Bbb E \bigg[ \sup_{[\mu],[\nu]\in\mathcal X} d_{\bar{\mathcal X}}\big([z_N^\mu(0,\cdot)],[z_N^\nu(0,\cdot)]\big)\bigg] \leq Ce^{-cN}.$$ This gives a one-force-one-solution principle with respect to a stronger topology. This was the choice of topology in \cite{GK20}. In fact one has the following corollary in the much stronger Hölder norm. 
\end{rk}

\begin{cor}[Synchronization in Hölder norm]\label{1f1sy}Fix a space-time white noise on $(-\infty,0]\times[0,1]$ and let $z_N^\mu,z_N^\nu: [-N,0]\times [0,1]\to \Bbb R$ denote the solutions to the stochastic heat equation \eqref{mshe} driven by $\xi$ started from any deterministic $\mu,\nu \in \mathcal X$ at time $t=-N$. Then 
one has that 
$$\sup_{[\mu],[\nu]\in \mathcal X}d_\mathcal Y\big([z_N^\mu(0,\cdot)],[z_N^\nu(0,\cdot)]\big) \to 0,\;\;\;\;\;almost \; surely.$$ 
Here $\mathcal Y$ is the space defined in \eqref{yspace}.
\end{cor}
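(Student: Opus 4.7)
The plan is to combine the expectation bound from Theorem~\ref{1f1s2} with a compactness trick that upgrades $d_\mathcal X$-convergence to $d_\mathcal Y$-convergence. The crucial observation is that, for almost every realization of $\xi$, the random set
\[
K_\xi\;:=\;\text{closure in $\mathcal Y$ of }\bigl\{[z_N^\mu(0,\cdot)]:N\ge 2,\;[\mu]\in\mathcal X\bigr\}
\]
is compact in $\mathcal Y$. Once this is established, one concludes by a standard topological fact: since the inclusion $\mathcal Y\hookrightarrow\mathcal X$ is continuous, the identity map from $(K_\xi,d_\mathcal Y)$ to $(K_\xi,d_\mathcal X)$ is a continuous bijection between compact Hausdorff spaces, hence a homeomorphism. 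In particular it is uniformly continuous in both directions, so there exists a modulus $\omega_\xi$ with $\omega_\xi(0^+)=0$ satisfying $d_\mathcal Y(p,q)\le\omega_\xi\bigl(d_\mathcal X(p,q)\bigr)$ for all $p,q\in K_\xi$.

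To prove the compactness I use the convolution property (Proposition~\ref{convo}) to write, for $N\ge 2$,
\[
z_N^\mu(0,y)\;=\;\int_{[0,1]} z_{-1,0}(u,y)\,z_N^\mu(-1,u)\,du,
\]
and normalize by $c_N^\mu:=\int_{[0,1]} z_N^\mu(-1,u)\,du>0$, so that $z_N^\mu(0,\cdot)/c_N^\mu$ is a convex combination of the slices $\{z_{-1,0}(u,\cdot)\}_{u\in[0,1]}$. Fix any $\kappa'\in(\kappa,1/2)$. By Proposition~\ref{nega} (and time-stationarity of $\xi$), almost surely
\[
\sup_{u\in[0,1]}\|z_{-1,0}(u,\cdot)\|_{C^{\kappa'}[0,1]}<\infty,\qquad \inf_{u,y\in[0,1]} z_{-1,0}(u,y)>0.
\]
These bounds pass to arbitrary convex combinations, so the family of normalized profiles $z_N^\mu(0,\cdot)/c_N^\mu$ is uniformly bounded in $C^{\kappa'}$ and uniformly bounded below by a strictly positive constant, with bounds depending only on $\xi$. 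By Arzela--Ascoli this family is precompact in $C^\kappa$, strict positivity survives in the limit, and passing to the projective image gives the desired compact $K_\xi\subset\mathcal Y$.

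Finally, Markov's inequality applied to the expectation bound in Theorem~\ref{1f1s2}, together with the first Borel--Cantelli lemma, yields
\[
\sup_{[\mu],[\nu]\in\mathcal X} d_\mathcal X\bigl([z_N^\mu(0,\cdot)],[z_N^\nu(0,\cdot)]\bigr)\xrightarrow[N\to\infty]{}0\quad\text{almost surely.}
\]
Since both $[z_N^\mu(0,\cdot)]$ and $[z_N^\nu(0,\cdot)]$ lie in $K_\xi$, applying the modulus $\omega_\xi$ from the first paragraph converts this into the corresponding $d_\mathcal Y$-synchronization, which is the claim. The main obstacle is the compactness step: one has to exploit the $C^{\kappa'}$-regularity and the strict positivity of the one-step propagator $z_{-1,0}$ simultaneously (both delivered by Proposition~\ref{nega}), which together absorb the potentially very rough --- even singular --- initial measures $\mu\in\mathcal X$ in a single step of the evolution.
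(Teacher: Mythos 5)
Your proposal is correct, and it follows the same broad strategy as the paper (Theorem \ref{1f1s2} plus Markov/Borel--Cantelli for the $d_\mathcal X$-synchronization, combined with a one-step compactness argument resting on the unit-time propagator bounds of Proposition \ref{nega}), but the mechanism producing the random modulus is genuinely different. The paper shows that for a.e. $\xi$ the one-step map $\mu\mapsto \log z^\mu(1,\cdot)$ is continuous from the compact space of probability measures into $C^\kappa[0,1]$ (using the $C^\kappa$ bound on $\mathbf z$, continuity of $a\mapsto z_1(a,x)$, and the uniform lower bound), hence uniformly continuous with a random modulus $w_\xi$, and then feeds in the $d_\mathcal X$-synchronization \emph{at time $-1$}, pushing it forward through the flow. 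You instead exploit compactness of the \emph{image}: the mixture representation $z_N^\mu(0,\cdot)/c_N^\mu=\int z_{-1,0}(u,\cdot)\rho(u)\,du$ gives uniform $C^{\kappa'}$ bounds and a uniform positive lower bound, hence a.s. compactness of $K_\xi\subset\mathcal Y$, and the modulus comes from the abstract fact that the continuous injection $(K_\xi,d_\mathcal Y)\to(\mathcal X,d_\mathcal X)$ is a homeomorphism onto its image, so you can apply the $d_\mathcal X$-bound directly at time $0$. What each buys: your route makes explicit the compactness of the random set of time-zero profiles (which the paper only records a posteriori in Remark \ref{solflow} as a by-product of its proof) and dispenses with verifying weak-continuity in the initial measure; the paper's route attaches the modulus directly to the solution flow, which is what Remark \ref{solflow} and the second proof of the spectral gap then reuse. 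Two small points you should spell out if you write this up in full: the continuity of $f\mapsto[f]_\mathcal Y$ (i.e. of taking logarithms and subtracting the mean) on a family that is uniformly bounded in $C^{\kappa'}$ and uniformly bounded below --- the same elementary fact the paper invokes in Corollary \ref{SFP} --- and that you choose $\omega_\xi$ nondecreasing so that the supremum over $[\mu],[\nu]$ passes through it; both are routine.
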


We remark that this proves \cite[Conjecture 1.1]{KM22} in a much stronger form, namely with a supremum over all possible initial data.

\begin{proof}

Let $\mathcal M_1[0,1]$ denote the set of probability measures on $[0,1]$. Let $d_{\text{Proh}}$ denote the Prohorov metric on $\mathcal M_1[0,1]$. Let $z^\mu$ denote the solution of the stochastic heat equation \eqref{mshe} started from initial measure $\mu$, and let $z^\mu(1,\cdot)$ denote the spatial process $x\mapsto z^\mu(1,x)$.

For each realization of $\xi$ define a random mapping $\Phi_\xi$ from $\mathcal M_1[0,1] \to C^\kappa[0,1]$ by sending $\mu\mapsto \log z^\mu(1,\cdot).$ Notice that for $\kappa<1/2,$
\begin{align}\sup_{\mu\in \mathcal M_1[0,1]} |z^\mu(1,x)-z^\mu(1,y)| &\leq \sup_{\mu\in \mathcal M_1[0,1]} \int_{[0,1]} |z_1(a,x)-z_1(a,y)|\mu(da) \leq \|\mathbf z(1,\cdot,\cdot)\|_{C^\kappa([0,1]^2)} |x-y|^{\kappa},\label{fra}
\end{align}uniformly over all $x,y\in [0,1]$, where we are using the notation of Proposition \ref{nega}. This bound implies a.s. equicontinuity of the random family $(z^\mu(1,\cdot))_{\mu \in \mathcal M_1[0,1]}.$ If $\mu_n\to \mu$ weakly then it is clear that from continuity of $a\mapsto z_1(a,x)$ for all $x$ one has the almost sure convergence 
\begin{equation}\label{frb}z^{\mu_n}(1,x) = \int_{[0,1]} z_1(a,x)\mu_n(da)\to \int_{[0,1]}z_1(a,x)\mu(da) = z^\mu(1,x).
\end{equation}
Moreover, we also have that 
\begin{equation}\label{frc}\inf_{\mu\in \mathcal M_1[0,1]}\inf_{x\in[0,1]} z^\mu(1,x) = \inf_{\mu\in \mathcal M_1[0,1]}\inf_{x\in[0,1]} \int_{[0,1]} z_1(a,x)\mu(da) \geq \inf_{a,x\in[0,1]}z_1(a,x)>0.
\end{equation}
Combining \eqref{fra}, \eqref{frb}, \eqref{frc}, we can conclude that with probability 1, the map $\Phi_\xi$ is almost surely continuous from 
$$\big( \mathcal M_1[0,1], d_{\text{Proh}}\big) \to \big( C^\kappa[0,1],\|\cdot\|_{C^\kappa[0,1]}\big).$$ By compactness of $\big( \mathcal M_1[0,1], d_{\text{Proh}}\big)$, it is uniformly continuous, so we conclude that there exists a \textit{random} continuous increasing function $w_\xi : [0,\infty)\to [0,\infty)$ with $w_\xi(0)=0$ such that $$\|\log z^\mu(1,\cdot)-\log z^\nu(1,\cdot)\|_{C^\kappa[0,1]}\leq w_\xi\big(d_{\text{Proh}}(\mu,\nu)\big),$$ uniformly over all $\mu,\nu\in \mathcal M_1[0,1]$. In the notation of the corollary statement, this implies by the definition \eqref{xspace} of $d_\mathcal X$ that \begin{align}\notag\sup_{\mu,\nu\in\mathcal M_{\text{Borel}}[0,1]} \bigg\|\log\bigg(\frac{ z_N^\mu(0,\cdot)}{\int_{[0,1]}z_N^\mu(-1,\cdot) }\bigg)&-\log\bigg(\frac{ z_N^\nu(0,\cdot)}{\int_{[0,1]}z_N^\nu(-1,\cdot) }\bigg)\bigg\|_{C^\kappa[0,1]} \\&\leq\notag \sup_{\mu,\nu\in\mathcal M_{\text{Borel}}[0,1]} w_\xi \bigg( d_{\text{Proh}} \bigg( \frac{ z_N^\mu(-1,\cdot)}{\int_{[0,1]}z_N^\mu(-1,\cdot) },\frac{ z_N^\nu(-1,\cdot)}{\int_{[0,1]}z_N^\nu(-1,\cdot) }\bigg)\bigg)\\ &= \notag \sup_{[\mu],[\nu]\in\mathcal X} w_\xi\big(d_\mathcal X \big([z_N^\mu(-1,\cdot)],[z_N^\nu(-1,\cdot)]\big)\big)\\&=\label{gn} w_\xi \bigg( \sup_{[\mu],[\nu]\in\mathcal X} d_\mathcal X \big([z_N^\mu(-1,\cdot)],[z_N^\nu(-1,\cdot)]\big)\bigg).\end{align}Here $\mathcal M_{\text{Borel}}[0,1]$ is the collection of all nonnegative and nonzero Borel measures on $[0,1]$. In the last bound we used the fact that $w_\xi$ is increasing and continuous.

Now for each realization of $\xi$ and each $N\in\Bbb N$, if we let $\mathcal G_N$ denote the (random) collection of Borel measures $\mu$ on $[0,1]$ such that $\int_{[0,1]}z_N^\mu(-1,\cdot)=1$, then every equivalence class $[\mu]\in \mathcal X$ has a representative that lies in $\mathcal G_N$. By the definition \eqref{yspace} of $d_\mathcal Y$, we have that $d_\mathcal Y([f],[g]) \leq 2\|\log f - \log g\|_{C^\kappa[0,1]},$ therefore 
\begin{align*}\sup_{[\mu],[\nu]\in\mathcal X}d_\mathcal Y\big( [z^\mu_N(-1,\cdot)],[z^\nu_N(-1,\cdot)]\big) &\leq 2 \sup_{\mu,\nu\in\mathcal G_N} \big\|\log z_N^\mu(0,\cdot)-\log z_N^\nu(0,\cdot)\big\|_{C^\kappa[0,1]} \\&\leq 2w_\xi \bigg( \sup_{[\mu],[\nu]\in\mathcal X} d_\mathcal X \big([z_N^\mu(-1,\cdot)],[z_N^\nu(-1,\cdot)]\big)\bigg),
\end{align*}
where we use \eqref{gn} and the definition of $\mathcal G_N$ in the last bound. Since $w_\xi(0)=0$ and $w_\xi$ is continuous, it follows that the latter quantity tends to zero almost surely as soon as we can show that \begin{equation}\sup_{[\mu],[\nu]\in\mathcal X} d_\mathcal X \big([z_N^\mu(-1,\cdot)],[z_N^\nu(-1,\cdot)]\big)\stackrel{N\to\infty}{\longrightarrow} 0 \;\;\;\;\;almost \;surely.\label{ast}\end{equation}
To prove this, let $C,c$ be as in Theorem \ref{1f1s2}, and note by that theorem and Markov's inequality that 
\begin{align*}\Bbb P \bigg( \sup_{[\mu],[\nu]\in\mathcal X} d_\mathcal X \big([z_N^\mu(-1,\cdot)],[z_N^\nu(-1,\cdot)]\big)>e^{-cN/2}\bigg)& \leq e^{cN/2} \Bbb E\bigg[ \sup_{[\mu],[\nu]\in\mathcal X} d_\mathcal X \big([z_N^\mu(-1,\cdot)],[z_N^\nu(-1,\cdot)]\big)\bigg]\\&\leq Ce^{-cN/2}.
\end{align*}Therefore we immediately conclude \eqref{ast} by Borel-Cantelli.
\end{proof}

\begin{rk}\label{solflow}
    Note that the proof of Corollary \ref{1f1sy}  contains the proof of the following statement. For each realization of the driving noise $\xi$, if we define the \textbf{time-one solution flow} of the KPZ equation to be the random mapping $\Psi_\xi:\mathcal X \to \mathcal Y$ given by $[\mu]_\mathcal X \mapsto [\log z^\mu(1,\cdot)]_\mathcal Y$ then $\Psi_\xi$ is necessarily a continuous map for almost every $\xi$. This is an extremely strong property when combined with the compactness of $\mathcal X$. Indeed it implies that the image $\Psi_\xi(\mathcal X)$ is a compact random subset of $\mathcal Y$ which is not obvious at all at first glance. Roughly speaking it means that the solution flow for the KPZ equation has extremely strong contractive properties which are stronger even than the Neumann-boundary \textit{additive} stochastic heat equation on $[0,1]$, which is \eqref{kpz0} without the nonlinear term on the right side. The latter is a Gaussian object that is much easier to study, but it does \textbf{not} have a uniform spectral gap or a compactification of the state space as we see for open KPZ. In general these type of compactness statements can be seen as a form of ``coming down from infinity,'' see e.g. \cite{TW,MW} where related phenomena have been studied for the dynamical $\Phi^4_2$ and $\Phi^4_3$ equations respectively.
\end{rk}

Let us remark on some of the above results. Note that Theorem \ref{1f1s2} already implies the uniqueness of the invariant measure. Also note that both the proofs of Theorem \ref{1f1s2}, Corollary \ref{1f1sy}, and Remark \ref{solflow} all generalize to the case of a spatially colored noise, as well as periodic boundary. Neither proof required anything about the strong Feller property.

The next question to address is total variation convergence. So far in this section we have not used the strong Feller property or the fact that the noise is white in space. If we bring these properties into the discussion, then we can recover the geometric ergodicity results of the previous section using a different proof. Specifically we will now 
show the following discrete-time alternative to Theorem \ref{tv3} when one replaces the assumption of unique ergodicity with that of an asymptotic vanishing condition on differences. 

\begin{thm}\label{expo}
Let $\mathfrak P$ be a strong Feller Markov operator on some compact state space $\mathcal X$, and let $\mathfrak P_*$ denote the adjoint operator on measures. Assume that 
$\mathfrak P^N_*\delta_\phi - \mathfrak P^N_*\delta_\psi$ converges weakly to $0$ for all $\phi,\psi\in\mathcal X$, i.e., $$\lim_{N\to\infty}|\mathfrak P^NF(\phi) - \mathfrak P^NF(\psi)|=0$$ for all $\phi,\psi\in\mathcal X$ and all Lipschitz continuous $F:\mathcal X\to \Bbb R$. Then there exists $C,c>0$ such that $$\sup_{\phi,\psi\in\mathcal X} \|\mathfrak P^N_*\delta_\phi - \mathfrak P^N_*\delta_\psi\|_{TV} \leq Ce^{-cN}.$$
\end{thm}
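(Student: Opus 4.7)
The plan is to combine the ultra Feller upgrade given by Proposition \ref{UFP} with Dini's theorem on the compact product space $\mathcal X \times \mathcal X$, and then convert a uniform bound at a single time $N_0$ into exponential decay via a Hahn-Jordan contraction on the Banach space $\mathcal M_0(\mathcal X)$.

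First I will define $h_N(\phi,\psi) := \|\mathfrak P^N_* \delta_\phi - \mathfrak P^N_* \delta_\psi\|_{TV}$ on $\mathcal X \times \mathcal X$ and record two structural features. Since $\mathfrak P_*$ is a total-variation contraction on $\mathcal M_0(\mathcal X)$, the sequence $h_N$ is non-increasing in $N$. For $N \ge 2$ the function $h_N$ is also continuous: Proposition \ref{UFP} together with the Skorohod representation argument written immediately after it implies that $\phi \mapsto \mathfrak P^2_* \delta_\phi$ is continuous from $\mathcal X$ into $(\mathcal M_1(\mathcal X),\|\cdot\|_{TV})$, and then composition with $\mathfrak P^{N-2}_*$ preserves this continuity since that operator contracts the TV norm.

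Next I will establish pointwise convergence $h_N(\phi,\psi) \to 0$. By hypothesis $\gamma_N := \mathfrak P^N_*(\delta_\phi - \delta_\psi)$ converges weakly to $0$ with $\|\gamma_N\|_{TV} \le 2$. I argue by a subsequence extraction: writing $\gamma_N = \gamma_N^+ - \gamma_N^-$ in Hahn-Jordan form with common mass $a_N = \|\gamma_N\|_{TV}/2 \in [0,1]$, pass along any subsequence to a further one with $a_{N_k} \to a$. If $a = 0$ then $\|\mathfrak P^2_* \gamma_{N_k}\|_{TV} \le 2 a_{N_k} \to 0$ by contractivity. If $a > 0$ then tightness (from compactness of $\mathcal X$) lets me pass to a further subsequence along which the probability measures $\gamma_{N_k}^{\pm}/a_{N_k}$ converge weakly to probability measures $\mu_\pm$; the weak limit identity $a(\mu_+ - \mu_-) = 0$ forces $\mu_+ = \mu_-$, and the ultra Feller property then gives $\mathfrak P^2_*(\gamma_{N_k}^{\pm}/a_{N_k}) \to \mathfrak P^2_* \mu_\pm$ in TV with matching limits, so $\mathfrak P^2_* \gamma_{N_k} \to 0$ in TV. In either case every subsequence has a further subsequence with $\mathfrak P^2_*\gamma_{N_k} \to 0$ in TV, so $h_{N+2}(\phi,\psi) \to 0$ for each fixed $(\phi,\psi)$.

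Finally, since $(h_N)_{N \ge 2}$ is a monotone non-increasing sequence of continuous nonnegative functions converging pointwise to $0$ on the compact space $\mathcal X \times \mathcal X$, Dini's theorem yields uniform convergence. Pick $N_0$ with $\sup_{\phi,\psi} h_{N_0}(\phi,\psi) \le 1/2$. For an arbitrary $\gamma \in \mathcal M_0(\mathcal X) \setminus \{0\}$ with Hahn-Jordan decomposition $\gamma = \gamma_+ - \gamma_-$ and common mass $a=\|\gamma\|_{TV}/2$, the identity
\begin{equation*}
\gamma = \frac{1}{a}\int\!\!\int (\delta_\phi - \delta_\psi)\,d\gamma_+(\phi)\,d\gamma_-(\psi)
\end{equation*}
combined with the triangle inequality gives $\|\mathfrak P^{N_0}_*\gamma\|_{TV} \le \tfrac14 \|\gamma\|_{TV}$, i.e.\ $\|\mathfrak P^{N_0}_*\|_{\text{op}} \le 1/4$ on $\mathcal M_0(\mathcal X)$. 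Iterating and writing a general $N$ as $kN_0 + r$ with $0 \le r < N_0$ yields constants $C,c>0$ with $\|\mathfrak P^N_*\|_{\text{op}} \le Ce^{-cN}$, and applying this to $\gamma = \delta_\phi - \delta_\psi$ (which has $\|\gamma\|_{TV} \le 2$) finishes the proof. The main obstacle is the second step: the hypothesis is purely weak, so nontrivial work is needed to upgrade it to a TV statement, and the Hahn-Jordan plus tightness plus ultra Feller combination is what makes this possible; all other steps are soft applications of Dini's theorem and a standard operator-norm bootstrap.
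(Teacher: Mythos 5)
Your argument is correct, and it follows the same two-step skeleton as the paper's proof of Theorem \ref{expo}: first upgrade the pointwise weak hypothesis to uniform total variation convergence using compactness of $\mathcal X$ together with the ultra Feller property of $\mathfrak P^2$ (Proposition \ref{UFP} and the weak-to-TV continuity of $\mathfrak P_*^2$ proved right after Definition \ref{ufq}), and then convert a single uniform bound at time $N_0$ into exponential decay via the Hahn--Jordan decomposition, the operator norm on $\mathcal M_0(\mathcal X)$, and submultiplicativity. Your second step is essentially identical to the paper's Step 2. Where you genuinely differ is the uniformization device in the first step: the paper fixes a reference state $\phi_0$, considers the maps $G_N(\phi)=\mathfrak P_*^N(\delta_\phi-\delta_{\phi_0})$, and applies Arzel\`a--Ascoli (equicontinuity from ultra Feller plus contraction, pointwise TV-precompactness from tightness plus ultra Feller), whereas you exploit the monotonicity of $h_N(\phi,\psi)=\|\mathfrak P_*^N(\delta_\phi-\delta_\psi)\|_{TV}$ in $N$ (which the paper never uses) together with continuity of $h_N$ for $N\ge 2$, and invoke Dini's theorem on $\mathcal X\times\mathcal X$. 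The trade-off: your route is a bit leaner (no reference point, no function-space compactness argument, and the monotonicity makes pointwise-to-uniform immediate), while the paper's Arzel\`a--Ascoli version is what powers the remark at the end of that section, where the same argument is run over a compact subset $K$ of a non-compact Polish space, a setting in which your Dini argument also works but the paper's formulation is already phrased to generalize. Your pointwise step is also slightly more roundabout than necessary: rather than Hahn--Jordan decomposing the difference $\gamma_N$ and extracting limits of the normalized parts, one can simply take weak subsequential limits $\alpha,\beta$ of $\mathfrak P_*^{N_k-2}\delta_\phi$ and $\mathfrak P_*^{N_k-2}\delta_\psi$ separately (tightness is automatic) and apply the weak-to-TV continuity of $\mathfrak P_*^2$ to each; but your version is correct as written, including the density of Lipschitz functions in $C(\mathcal X)$ needed to identify $\mu_+=\mu_-$ and the $a=0$ degenerate case.
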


In other words, compactness plus the the strong Feller property allows us to upgrade pointwise convergence of differences in the weak topology to \textit{uniform} convergence of differences in total variation, and moreover the uniform convergence is automatically exponentially fast. The continuous-time analogue of this result is clearly implied by Theorem \ref{tv3}. 
However 
we have formulated this result in discrete time, which therefore requires a separate proof. It is possible to give a spectral-theoretic proof of this statement just as in the case of Theorem \ref{tv3}, but instead we choose to go a different route via Arzela-Ascoli.

\begin{proof}
We break the proof into two steps.
\\
\\
\textit{Step 1. } First we establish the intermediate claim that 
\begin{equation}\label{inter}\lim_{N\to \infty}\sup_{\phi,\psi\in\mathcal X} \|\mathfrak P^N_*\delta_\phi - \mathfrak P^N_*\delta_\psi\|_{TV}=0.
\end{equation}Let $\mathcal M_1(\mathcal X)$ be the space of probability measures on $\mathcal X$, and equip $\mathcal M_1(\mathcal X)$ with the total variation norm. Fix some reference state $\phi_0\in\mathcal X$, and define $G_N:\mathcal X\to\mathcal M_1(\mathcal X)$ by $\phi\mapsto \mathfrak P_*^N (\delta_\phi-\delta_{\phi_0}).$ We know by assumption that the pointwise limit of $G_N$ equals the zero measure in the topology of weak convergence of measures. Now the goal is to upgrade this pointwise-weak convergence to total variation convergence that is uniform over all $\phi.$ To do this, we are going to apply Arzela-Ascoli to the maps $G_N$. 

First we claim that the collection $\{G_N\}_{N\ge 2}$ is equicontinuous from $\mathcal X\to\mathcal M_1(\mathcal X)$. To prove this, write
$$\big\| G_N(\phi) - G_N(\psi)\big\|_{TV} = \big\| \mathfrak P^N_*(\delta_\phi - \delta_\psi)\big\|_{TV} \leq \|\mathfrak P^2_*(\delta_\phi - \delta_\psi)\|_{TV},$$
where the second inequality follows from the fact that $N\ge 2$ and $\|\mathfrak P_*(\mu-\nu)\|_{TV}\leq \|\mu-\nu\|_{TV}$. By the ultra Feller property (see Definition \ref{ufq} and Proposition \ref{UFP}) we know $\phi \mapsto \mathfrak P_*^2 \delta_\phi$ is continuous from $\mathcal X\to \mathcal M_1(\mathcal X)$, and therefore uniformly continuous by compactness. Thus for all $\epsilon>0$ there exists some $\delta$ such that the right side is less than $\epsilon$ uniformly over all $\phi,\psi\in\mathcal X$ such that $d_\mathcal X(\phi,\psi)<\delta,$ which proves the equicontinuity.

Next, we need to prove pointwise compactness of $G_N$. For each $\phi$, we know by compactness of $\mathcal X$ that the collection $\{\mathfrak P^{N-2}_*\delta_\phi\}_{N\ge 2}$ is tight, and therefore precompact in the weak topology on $\mathcal M_1(\mathcal X)$. By the ultra Feller property, $\mathfrak P_*^2$ is continuous from the weak topology on $\mathcal M_1(\mathcal X)$ to the total variation topology on $\mathcal M_1(\mathcal X)$, therefore it follows that the collection $G_N(\phi)= \mathfrak P_*^2\big(\mathfrak P^{N-2}_*\delta_\phi\big)-\mathfrak P_*^2\big(\mathfrak P^{N-2}_*\delta_{\phi_0}\big)$ is pointwise relatively precompact in the TV topology for each $\phi$. 

Therefore by Arzela-Ascoli, any pointwise limit of $G_N$ in weak convergence is in fact a uniform limit in total variation distance, which establishes \eqref{inter} since $$\sup_{\phi,\psi\in\mathcal X} \|\mathfrak P^N_*\delta_\phi - \mathfrak P^N_*\delta_\psi\|_{TV} \leq \sup_{\phi\in\mathcal X} \|\mathfrak P^N_*\delta_\phi - \mathfrak P^N_*\delta_{\phi_0}\|_{TV}+\sup_{\psi\in\mathcal X} \|\mathfrak P^N_*\delta_{\phi_0} - \mathfrak P^N_*\delta_\psi\|_{TV}.$$
\textit{Step 2. } Next we need to show that \eqref{inter} necessarily happens exponentially fast, which will require a separate argument. Define $\mathcal M_0(\mathcal X)$ to be the Banach space consisting of all signed Borel measures $\rho$ on $\mathcal X$ with $\rho(\mathcal X)=0$, equipped with total variation norm. For a linear operator $T:\mathcal M_0(\mathcal X)\to \mathcal M_0(\mathcal X),$ define $\|T\|_{\text{op}}:=\sup_{\|\rho\|_{TV}=1} \|T(\rho)\|_{TV}$ to be the operator norm. Note that $\mathfrak P_*$ is a bounded linear operator from $\mathcal M_0(\mathcal X)\to \mathcal M_0(\mathcal X)$ with $\|\mathfrak P_*\|_{\text{op}} \leq 1.$

The Hahn-Jordan decomposition theorem says that any $\rho \in \mathcal M_0(\mathcal X)$ can be decomposed as $\rho_+-\rho_-$ where both $\rho_+,\rho_-$ are nonnegative measures and $\rho_+(\mathcal X) = \rho_-(\mathcal X).$ If $\|\rho\|_{TV}=2$ then $\rho_+$ and $\rho_-$ are probability measures and we can write 
\begin{align*}
    \rho &= \rho_+-\rho_- = \int_\mathcal X \delta_\phi \rho_+(d\phi) - \int_\mathcal X \delta_\psi \rho_-(d\psi) =\int_\mathcal X\int_\mathcal X \big( \delta_\phi-\delta_\psi\big) \rho_+(d\phi)\rho_-(d\psi), 
\end{align*}
so that 
$$\mathfrak P_*^N\rho = \int_\mathcal X\int_\mathcal X \mathfrak P_*^N\big( \delta_\phi-\delta_\psi\big) \rho_+(d\phi)\rho_-(d\psi),$$ where these formal expressions should be interpreted by pairing these measures against continuous functions on $\mathcal X$, then moving the pairings inside the integrals accordingly. Consequently 
\begin{align*}
    \sup_{\|\rho\|_{TV}=2} \|\mathfrak P_*^N\rho\|_{TV} &\leq \sup_{\|\rho\|_{TV}=2} \int_\mathcal X\int_\mathcal X \big\|\mathfrak P_*^N\big( \delta_\phi-\delta_\psi\big)\big\|_{TV} \rho_+(d\phi)\rho_-(d\psi)\\& \leq \sup_{\phi,\psi\in\mathcal X}\big\|\mathfrak P_*^N\big( \delta_\phi-\delta_\psi\big)\big\|_{TV} \stackrel{N\to\infty}{\longrightarrow} 0,
\end{align*}
where we used the fact that $\rho_+,\rho_-$ are probability measures in the second inequality, and then \eqref{inter} in the convergence statement.

We have proved that $\|\mathfrak P_*^N\|_{\text{op}}\to 0$ as $N\to\infty$. Choose some $N_0\in \Bbb N$ such that $\|\mathfrak P_*^{N_0}\|_{\text{op}} \leq 1/2.$ Then for all $k \in \Bbb N$ and all $r\in \{0,...,N_0-1\},$ we have $$\|\mathfrak P_*^{kN_0+r}\|_{\text{op}} \leq\|\mathfrak P_*^{kN_0}\|_{\text{op}}\|\mathfrak P_*^r\|_{\text{op}} \leq \|\mathfrak P_*^{N_0}\|^k_{\text{op}}\|\mathfrak P_*\|_{\text{op}}^r\leq 2^{-k}\cdot 1,$$ where we use the sub-multiplicativity of the operator norm several times. This exponential bound on the operator norm clearly implies the claim.
\end{proof}

We remark that neither compactness nor the strong Feller property were needed in the second step of the proof. In other words, if $\mathfrak P$ is \textit{any} Feller Markov operator on \textit{any} Polish space satisfying \eqref{inter}, then the convergence is automatically exponential. Compactness plus strong Feller just give a sufficient condition for \eqref{inter} to hold.

On the other hand, the first step of the argument also generalizes to show that if $\mathcal X$ is some Polish space (not necessarily compact) and if $K\subset \mathcal X$ is a compact subset, then for any strong Feller Markov operator $\mathfrak P$ on $\mathcal X$ such that $\mathfrak P_*^N\big( \delta_\phi-\delta_\psi\big)$ converges weakly to zero for all $\phi,\psi\in K$, and such that $\{\mathfrak P_*^N \delta_\phi\}_{N\ge 1}$ is tight for some $\phi\in K$, one actually has $$\lim_{N\to\infty}\sup_{\phi,\psi\in K} \big\|\mathfrak P_*^N\big( \delta_\phi-\delta_\psi\big)\big\|_{TV}=0,$$ though not necessarily exponentially fast.

\begin{cor}[Another proof of geometric ergodicity]\label{bob}
Let $\mathfrak P$ be a strong Feller Markov operator on some compact state space $\mathcal X$, and let $\mathfrak P_*$ denote the adjoint operator on measures. Assume that 
$\mathfrak P^N_*\delta_\phi - \mathfrak P^N_*\delta_\psi$ converges weakly to $0$ for all $\phi,\psi\in\mathcal X$. Then $\mathfrak P$ admits a unique invariant measure $\rho$ and one has that $$\sup_{\phi\in\mathcal X} \|\mathfrak P^N_*\delta_\phi - \rho\|_{TV} \leq Ce^{-cN},$$
In particular this is true for the Markov operator of open KPZ on 
the space $\mathcal X$ given in \eqref{xspace}.
\end{cor}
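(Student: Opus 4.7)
The plan is to derive Corollary \ref{bob} as a short consequence of Theorem \ref{expo} together with standard Krylov--Bogoliubov existence, and then verify the hypotheses for the KPZ Markov operator.

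First I would apply Theorem \ref{expo} directly to obtain constants $C,c>0$ with
\[
\sup_{\phi,\psi\in\mathcal X}\|\mathfrak P_*^N\delta_\phi-\mathfrak P_*^N\delta_\psi\|_{TV}\leq Ce^{-cN}.
\]
Existence of at least one invariant probability measure $\rho$ is immediate from compactness of $\mathcal X$ and the Feller property via Krylov--Bogoliubov (this is already recorded in Corollary \ref{existence} for the KPZ case, but it is true in this abstract setting as well). Uniqueness follows from the above uniform bound: if $\rho_1,\rho_2$ are both invariant then
\[
\|\rho_1-\rho_2\|_{TV}=\|\mathfrak P_*^N\rho_1-\mathfrak P_*^N\rho_2\|_{TV}\leq \int\!\!\!\int\|\mathfrak P_*^N\delta_\phi-\mathfrak P_*^N\delta_\psi\|_{TV}\,\rho_1(d\phi)\rho_2(d\psi)\leq Ce^{-cN}\to 0.
\]
Having established that $\rho$ exists and is unique, I would use invariance once more to write $\rho=\int_\mathcal X \mathfrak P_*^N\delta_\psi\,\rho(d\psi)$, so that for any $\phi\in\mathcal X$,
\[
\|\mathfrak P_*^N\delta_\phi-\rho\|_{TV}=\bigg\|\int_\mathcal X\bigl(\mathfrak P_*^N\delta_\phi-\mathfrak P_*^N\delta_\psi\bigr)\rho(d\psi)\bigg\|_{TV}\leq \sup_{\phi,\psi\in\mathcal X}\|\mathfrak P_*^N\delta_\phi-\mathfrak P_*^N\delta_\psi\|_{TV}\leq Ce^{-cN}.
\]
Taking the sup over $\phi$ gives the claimed bound.

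For the specific KPZ application, the state space $\mathcal X$ from \eqref{xspace} is compact, and the strong Feller property on $\mathcal X$ is Corollary \ref{SFP}. To verify the weak-convergence hypothesis of Theorem \ref{expo}, I would use Theorem \ref{1f1s2}: for any two initial conditions $[\mu],[\nu]\in\mathcal X$ one has $\Bbb E[d_\mathcal X([z_N^\mu(0,\cdot)],[z_N^\nu(0,\cdot)])]\leq Ce^{-cN}$, so for any Lipschitz $F:\mathcal X\to\Bbb R$,
\[
|\mathfrak P^NF([\mu])-\mathfrak P^NF([\nu])|\leq \mathrm{Lip}(F)\cdot \Bbb E\bigl[d_\mathcal X([z_N^\mu(0,\cdot)],[z_N^\nu(0,\cdot)])\bigr]\xrightarrow{N\to\infty}0,
\]
which is exactly weak convergence of $\mathfrak P_*^N\delta_{[\mu]}-\mathfrak P_*^N\delta_{[\nu]}$ to the zero measure. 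Hence Theorem \ref{expo} applies and the corollary follows.

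The main (minor) obstacle is the small bookkeeping of going from ``Theorem \ref{1f1s2} in expectation over $\mathcal X$-distance'' to ``pointwise weak convergence of signed measures'', but this reduces to the Lipschitz/Kantorovich--Rubinstein characterization of weak convergence on a compact metric space, so no new idea is required; the substantive content is already packaged into Theorems \ref{expo} and \ref{1f1s2}, with Corollary \ref{SFP} supplying the strong Feller hypothesis.
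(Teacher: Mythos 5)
Your proposal is correct and follows essentially the same route as the paper: invoke Theorem \ref{expo}, use Krylov--Bogoliubov for existence, and then write $\rho=\int_{\mathcal X}\mathfrak P_*^N\delta_\psi\,\rho(d\psi)$ to transfer the uniform bound on differences of Dirac masses to the bound against $\rho$, with Corollary \ref{SFP} and Theorem \ref{1f1s2} (via the Lipschitz test-function estimate) verifying the hypotheses for open KPZ. Your explicit uniqueness argument via the double integral is a small addition the paper leaves implicit, but it is the same circle of ideas.
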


\begin{proof}
Compactness of $\mathcal X$ guarantees the existence of at least one invariant probability measure $\rho$ via Krylov-Bogoliulov. For all $N\in\Bbb N$ the invariant measure satisfies $$\rho = \int_\mathcal X \mathfrak P_*^N \delta_\psi\; \rho(d\psi),$$ therefore we see that 
\begin{align*}
\sup_{\phi\in\mathcal X} \|\mathfrak P^N_*\delta_\phi - \rho\|_{TV} &= \sup_{\phi\in\mathcal X} \bigg\|\mathfrak P^N_*\delta_\phi - \int_\mathcal X  \mathfrak P_*^N \delta_\psi\;\rho(d\psi)\bigg\|_{TV} \\ &= \sup_{\phi\in\mathcal X} \bigg\|\int_\mathcal X \big(\mathfrak P^N_*\delta_\phi -  \mathfrak P_*^N \delta_\psi\big)\;\rho(d\psi)\bigg\|_{TV} \\&\leq \sup_{\phi\in\mathcal X} \int_\mathcal X \big\| \mathfrak P^N_*\big(\delta_\phi -  \delta_\psi\big)\big\|_{TV}\rho(d\psi) \\&\leq \sup_{\phi,\psi\in\mathcal X} \|\mathfrak P^N_*\big(\delta_\phi - \delta_\psi\big)\|_{TV} \\ &\leq Ce^{-cN},
\end{align*}
where we use Theorem \ref{expo} in the last bound.

For the special case of KPZ, Corollary \ref{SFP} shows that $\mathfrak P$ is strong Feller on $\mathcal X$. Moreover, Theorem \ref{1f1s2} clearly guarantees $\mathfrak P^N_*\delta_\phi - \mathfrak P^N_*\delta_\psi$ converges weakly to $0$ for all $\phi,\psi\in\mathcal X.$
\end{proof}

\end{document}